\theoremstyle{plain}
\newtheorem{thm}[equation]{Theorem}
\newtheorem{prop}[equation]{Proposition}
\newtheorem{cor}[equation]{Corollary}
\newtheorem{lemma}[equation]{Lemma}
\numberwithin{equation}{section}
\newcommand{\Q}{\mathbb Q}
\newcommand{\Z}{\mathbb Z}
\newcommand{\R}{\mathbb R}
\newcommand{\C}{\mathbb C}
\def\Hom{{\rm Hom}}
\def\Aut{{\rm Aut}}
\def\SL{{\rm SL}}
\def\Spin{{\rm Spin}}
\def\GL{{\rm GL}}
\def\PGL{{\rm PGL}}
\def\Gal{{\rm Gal}}
\def\Stab{{\rm Stab}}
\def\Res{{\rm Res}}
\def\der{{\rm der}}
\def\R{{\mathbb R}}
\def\Z{{\mathbb Z}}
\title {Twisted Bhargava cubes}
\author{Wee Teck Gan and Gordan Savin}
\address{W.T.G.:   Department of Mathematics, National University of Singapore, 10 Lower Kent Ridge Road
Singapore 119076} \email{matgwt@nus.edu.sg}
\address{G. S.: Department of Mathematics, University of Utah, Salt Lake City, UT}\email{savin@math.utah.edu}
 \subjclass[2000]{11S90, 17A75,  17C40}
\begin{document}
\maketitle

\begin{abstract}
In his re-interpretation of Gauss's composition law for binary quadratic forms, Bhargava determined the integral orbits of a prehomogeneous vector space which arises naturally in the structure theory of the split group $\Spin_8$. 
We consider a twisted version of this prehomogeneous vector space which arises in quasi-split $\Spin_8^E$, where $E$ is an \'etale cubic algebra over a field $F$. We classify the generic orbits over $F$ by twisted composition $F$-algebras of $E$-dimension $2$.
\end{abstract}

\section{\bf Introduction}

In a seminal series of papers (\cite{B1}, \cite{B2},  \cite{B3}), Bhargava has extended Gauss's composition law for binary quadratic forms  to far more general situations. The key step in his extension is the investigation of the integral orbits of a group over $\Z$ on a lattice in a  prehomogeneous vector space. The prehomogeneous vector space which plays a role in elucidating the nature of the classical Gauss's composition arises from a simply connected Chevalley 
group $G$ of type $D_4$.  More precisely, let $P=MN$ be a maximal parabolic subgroup of $G$ corresponding to the branching point of the Dynkin diagram of type $D_4$. As it is readily seen from the Dynkin diagram,  the derived group $M_{\der}$ of the Levi factor $M$ is isomorphic to $\SL_2^3$. The unipotent radical $N$ is  
9-dimensional, and is a two step nilpotent group with 1-dimensional center $Z$.  The adjoint action of $M_{\der}$ on the abelian quotient $N/Z$ is isomorphic to 
$V=V_2 \otimes V_2 \otimes V_2$ where $V_2$ is the standard 2-dimensional representation of $\SL_2$. 
Since Bhargava regards an element of $(\Z^2)^{\otimes 3}$ as a cube whose vertices are labelled by elements of $\Z$, we shall refer to the prehomogeneous vector space  $V$ or its elements as Bhargava's cubes. 
\vskip 5pt

One of Bhargava's achievements is the determination of  the corresponding integral orbits, i.e.  $\SL_2(\Z)^3$-orbits on $\Z^2 \otimes \Z^2 \otimes \Z^2$. In particular, he discovered that  generic  orbits are in a bijection with isomorphism classes of tuples $(A, I_1, I_2, I_3)$ where 
 $A$ is an order in an \'{e}tale  quadratic $\Q$-algebra and $I_1$, $I_2$ and $I_3$ are elements in the narrow class group of $A$, i.e. invertible fractional ideals, such that $I_1 \cdot I_2 \cdot I_3 = 1$.  
 More precisely,  to every cube  Bhargava attaches three pairs $(A_i,B_i)$, $i=1,2,3$, of $2\times 2$ matrices by slicing the cube in the three possible ways. In this way he obtains three binary quadratic forms 
 \[ 
 Q_i(x,y)=-\det(A_i x + B_i y). 
 \] 
 A remarkable fact, discovered by Bhargava, is that the three forms have the same discriminant $\Delta$. It is a degree 4 polynomial on $V$, invariant under the action of $M_{\der}$. The cube is generic if $\Delta\neq 0$. In this case, the ring $A$ is the unique quadratic order of discriminant $\Delta$ and the three fractional ideals $I_i$ correspond to the three quadratic forms $Q_i$ by a dictionary that essentially goes back to Gauss. 
 
 \vskip 5pt 

We now consider the group $G$ over a field $F$ of characteristic different from $2$ and $3$. 
The group $G$ is exceptional in the sense that its outer automorphism group is isomorphic to $S_3$: no other absolutely simple linear algebraic group has such a large outer automorphism group. 
 In particular, since $S_3$ is also the group of automorphisms of the split \'{e}tale cubic $F$-algebra $F \times F \times F$, we see that every \'{e}tale cubic $F$-algebra $E$ determines a quasi-split form $G_E$. 
  Fixing an \'epinglage of $G$ defines a splitting of the outer automorphism group $S_3$ to ${\rm Aut}(G)$, so that   $S_3$ acts on $V$ by  a group of symmetries of the cube, fixing two opposite vertices.
  Then the quasi-split group $G_E$ contains a maximal parabolic subgroup $P_E= M_E  N_E$,  which is a twisted form of the parabolic $P$ mentioned above. The derived group $M_{E,\der}$ of $M_E$ is isomorphic to $\Res_{E/F}\SL_2$. The
 adjoint action of $M_{E,\der}$ on $N_E/Z_E$, where $Z_E$ is the center of $N_E$,  is isomorphic to a twisted form $V_E$ of $V$. We shall call $V_E(F)$ (or its elements) the $E$-twisted Bhargava cube.

\vskip 5pt
 
 Since the action of $S_3$ on $V$ permutes the three pairs $(A_i,B_i)$ of $2\times 2$ matrices obtained by slicing a cube in three different ways, it follows by Galois descent that $\Delta$ gives rise to a degree 4 polynomial invariant on $V_E$,  denoted by $\Delta_E$.  It is a quasi-invariant for $M_E$. More precisely, if $v\in V_E(F)$ and $g\in M_E(F)$, then 
 \[ 
 \Delta_E(gv)= \chi(v)^2 \cdot   \Delta_E(v) 
 \] 
 where $\chi$ is a character of $M_E$ given by the adjoint action on $Z_E$. 
 An $M_E(F)$-orbit  $\mathcal O\subset V_E(F)$ is called generic if $\Delta_E(v) \neq 0$ for one and hence for all $v\in \mathcal O$. If $\mathcal O$ is generic, then 
  the quadratic algebra $K=F(\sqrt{\Delta_E(v)})$ is  \'etale. It is an invariant of the generic orbit. 
\vskip 5pt 

The purpose of this paper is to classify the generic $M_E(F)$-orbits on $V_E(F)$.  The main result is:
\vskip 5pt

\begin{thm} \label{T:main} 
Let $F$ be a field of characteristic different from  $2$ or $3$. 
Fix an \'{e}tale cubic $F$-algebra $E$. 
\vskip 5pt

\noindent (i) There are natural bijections between the following sets:
\vskip 5pt

\begin{itemize} 

\item[(a)] Generic $M_E(F)$-orbits $\mathcal O$ on the $E$-twisted Bhargava cube.

\item[(b)]  $E$-isomorphism classes of $E$-twisted composition algebras $(C,Q,\beta)$ over $F$ which are of $E$-dimension $2$. 
 
\item[(c)]  $E$-isomorphism classes of pairs $(J,i)$ where $J$ is a Freudenthal-Jordan algebra over $F$ of dimension $9$ and
\[  i: E \hookrightarrow J \]
is an $F$-algebra homomorphism. Here an $E$-isomorphism from $(J,i)$ to $(J',i')$ is a commutative diagram
\[  \begin{CD}
E @>i>> J \\
@VVV  @VVV \\
E  @>i'>> J' \end{CD} \]
where the first vertical arrows is the identity, while the second is an $F$-isomorphism of $J$ and $J'$. 
 \end{itemize}
\vskip 5pt

\noindent (ii) The bijections in (i)   identify
\[ \Stab_{ M_E}( \mathcal{O}) \cong \Aut_E(C,Q,\beta)\cong  \Aut_E(i: E \hookrightarrow J). \]

\vskip 5pt

\noindent (iii)   Let $K=F(\sqrt{\Delta_E(v)})$ be the \'etale quadratic algebra $K$ attached to a generic orbit $\mathcal O$ containing $v$.  Let $L=E \otimes_F K$. The group 
$\Stab_{ M_E}( \mathcal{O})$ in (ii) sits in a short exact sequence of algebraic groups:
\[  \begin{CD}
1 @>>>  T_{E,K} @>>> \Stab_{ M_E}( \mathcal{O}) @>>> \Z/2\Z @>>>1 \end{CD} \]
where
\[   T_{E,K}(F) = \{ x \in L^{\times} :  N_{L/E}(x) = 1 = N_{L/K}(x) \} \]
is a 2-dimensional torus 
and  where the conjugation action of the nontrivial element of $\Z/2\Z$ on $T_{E,K}$ is given by  $x \mapsto x^{-1}$.
\end{thm}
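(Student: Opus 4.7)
The plan is to establish the three bijections in (i) by explicit and mutually inverse constructions, from which (ii) will follow by tracking equivariance and (iii) will reduce to a direct computation of $\Aut_E(C,Q,\beta)$. For the bijection (a)$\leftrightarrow$(b), I would proceed by Galois descent from the split case. In the split situation $E = F \times F \times F$, Bhargava's three slicings attach to a generic cube three pairs $(A_i, B_i)$ of $2 \times 2$ matrices and three binary quadratic forms $Q_i(x,y) = -\det(A_i x + B_i y)$, and these are permuted by the outer $S_3$-symmetry of $G$. For a general $\acute{e}$tale cubic $E$, the splitting field of $E/F$ carries a distinguished $S_3$-action which simultaneously twists $\SL_2^3$ into $\Res_{E/F}\SL_2$ and permutes the three slicings. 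Galois descent then assembles the three slicings into a single pair $(A,B)$ of $2 \times 2$ matrices over $E$, equivalently a binary quadratic form $Q \colon C \to E$ on $C = E^2$. The cubing map $\beta \colon C \to C$ required for the twisted composition algebra structure, and the identities it satisfies, should arise from the Bhargava-style polynomial relations among the three split slicings, which are themselves $S_3$-equivariant. The inverse sends $(C,Q,\beta)$ back to a cube by trivialising $C \cong E^2$, extracting $(A,B)$ from $Q$, and reassembling the cube via $S_3$-invariance.

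For (b)$\leftrightarrow$(c), I would invoke the classical Springer--Petersson construction. Given $(C,Q,\beta)$ of $E$-dimension $2$, set $J = E \oplus C$ and define a Jordan product using the multiplication on $E$, its action on $C$, the form $Q$, and the cubing $\beta$. The result is a $9$-dimensional Freudenthal--Jordan algebra $J$ containing $E$ as a cubic \'etale subalgebra via the first-summand inclusion $i$; conversely, the orthogonal complement $i(E)^{\perp} \subset J$ under the Jordan trace form recovers the twisted composition algebra. This is the familiar equivalence between first-row Freudenthal--Jordan algebras carrying a marked cubic \'etale subalgebra and twisted composition algebras of $E$-dimension $2$. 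Part (ii) then follows by tracking equivariance through (i): an $E$-linear automorphism of $C$ preserving $Q$ and commuting with $\beta$ realises under (a)$\leftrightarrow$(b) exactly as an element of $M_E(F)$ stabilising the cube, and similarly commutes with the Springer construction in (b)$\leftrightarrow$(c); the reverse directions are immediate.

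For (iii), I would compute $\Aut_E(C,Q,\beta)$ directly. Since $v$ is generic, $Q$ is a nondegenerate binary quadratic form over $E$ whose discriminant generates $K/F$, so its special orthogonal group $\SO(Q)$ is the norm-one torus of $L/E$ where $L = E \otimes_F K$. Requiring commutation with $\beta$ imposes the additional equation $N_{L/K} = 1$, cutting out precisely $T_{E,K}$. Finally, $\O(Q)/\SO(Q) \cong \Z/2\Z$ supplies the quotient in the exact sequence, and one verifies directly that the non-identity component acts on $T_{E,K}$ by inversion $x \mapsto x^{-1}$. The main obstacle is the intrinsic construction of $\beta$ in (a)$\leftrightarrow$(b): in the split case the three slicings are individually defined and symmetric, but in the twisted setting only their $S_3$-orbit is defined over $F$, so $\beta$ cannot be a single slicing and must be read off the entire cube as a canonically $E$-structured object; verifying the twisted composition algebra axioms will then reduce to polynomial identities that in the split case express the compatibility of the three Bhargava forms with the quartic invariant $\Delta$. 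The remaining steps, while technical, are essentially reductions to well-documented constructions.
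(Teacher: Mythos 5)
Your outline for (b)$\leftrightarrow$(c) via Springer's construction matches the paper, as does your computation for (iii): $\SO(Q)$ is indeed the $E$-torus of norm-one elements of $L=E\otimes_F K$ (this uses that the discriminant $\Delta_C(v)=N_C(v)^2-4N_E(Q(v))$ of $Q$ at any generic $v$ lies in $F^\times$, not merely $E^\times$, which is built into the axiom $b_Q(v,\beta(v))\in F$), and compatibility with $\beta$ imposes $N_{L/K}=1$, cutting out $T_{E,K}$. The gaps are all in (a)$\leftrightarrow$(b).

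First, the construction of $\beta$ is the heart of the matter and is not supplied. It is not a ``polynomial relation among the three slicings'' nor a consequence of compatibility with $\Delta$; it is a genuinely new bilinear pairing built out of the cube's trilinear structure. In the split case, on $C=(F^2)^3$, one has $\beta(z_1,z_2,z_3)=(z_1',z_2',z_3')$ with $z_1'=(-z_3^{\top} B_1 z_2,\ z_3^{\top}A_1 z_2)$ and cyclic permutations --- the slices $z_2,z_3$ are paired against the two faces $(A_1,B_1)$ of the cube. This is the Gauss-composition bilinearity that lets the cube encode a composition algebra rather than just a quadratic form, and nothing in your outline produces it. Relatedly, your proposed inverse (``extract $(A,B)$ from $Q$'') fails outright: $Q$ does not determine the cube --- for instance $v$ and $-v$ give the same $Q$ but opposite $\beta$. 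It is $\beta$ that recovers the cube, via $\beta(e_1)=(-e,a)$ and $\beta(e_2)=(-b,f)$. Second, you need to prove surjectivity on isomorphism classes: that every twisted composition algebra admits a basis of the form $\{v,\beta(v)\}$ (a reduced basis), so that its tensors can be put in the image of the cube-to-algebra map. The paper does this by factoring the degree-6 invariant $\Delta_C$ as $a\cdot P^2$ with $P$ absolutely irreducible of degree 3 over $F$, guaranteeing a vector $v$ with $\Delta_C(v)\ne 0$. This is not routine --- the paper notes this factorization appears to be new.

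Finally, it is worth pointing out that the paper's primary proof of (i) and (ii) is a Galois-cohomological twisting argument that sidesteps the explicit $\beta$ entirely: one computes the stabilizer $\Stab_{M_E}(v_{0,E})$ of the distinguished cube $(1,0,0,-1)$ to be $E^1\rtimes\Z/2\Z$, recognizes this as $\Aut_E(C_E)$ for the distinguished twisted composition algebra $C_E=(E^2,\ xy,\ (y^{\#},x^{\#}))$, and then deduces the orbit bijection from the equality of the resulting $H^1$'s (using vanishing of $H^1(F,M_E)$). The explicit cube-to-algebra map is developed separately for arithmetic applications. Your proposal attempts only the explicit route, which is the harder one, and leaves its essential step unspecified.
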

 
\vskip 5pt

The reader is probably not familiar with some terminology in the theorem, so an explanation is  necessary.  
 In order to define twisted composition algebras,  recall that the algebra $E$ carries a natural 
cubic form, the norm $N_E$.  The norm defines a quadratic map $x\mapsto x^{\#}$ from $E$ to $E$ such that $x\cdot x^{\#}=N_E(x)$. 
For example, if $E=F^3$, then 
\[ 
N_E(x_1,x_2,x_3)=x_1 x_2 x_3 \text{ and } (x_1,x_2,x_3)^{\#}=(x_2x_3, x_3 x_1, x_1 x_2).
\]   

\vskip 5pt 
\noindent  Now, an $E$-twisted composition algebra (or simply twisted composition algebra)  of $E$-dimension 2 is a triple  $(C ,Q,\beta)$ where
\vskip 5pt

\begin{itemize}
\item $C$ is an $E$-vector space of dimension 2. 

\item $Q: C \longrightarrow E$ is a quadratic form.  
\vskip 5pt

\item $\beta: C \longrightarrow C$ is a quadratic map such that, for every $v\in C$ and $x\in E$, 
\[ \beta(x v) = x^{\#}\cdot  \beta(v)  \quad \text{and} \quad Q(\beta(v)) = Q(v)^{\#}.  \]
\vskip 5pt

\item If  $b_Q$ is the bilinear form associated to $Q$, then $b_Q(v,\beta(v)) \in F$ for every $v\in C$. 
 \end{itemize}

\vskip 5pt 
This definition is due to Springer, as is the bijection of the sets  (b) and (c). More precisely,  suppose  we have an algebra embedding 
$ i: E \hookrightarrow J$.  Then  we have a decomposition
\[  J = E \oplus C \]
where $C$ is defined as the orthogonal complement to $E$, with respect to the trace form on $J$. The upshot is that the Jordan algebra $J$ determines a 
structure of a twisted composition algebra on $C$, and vice versa. 
\vskip 5pt 

Our contribution is the bijection between the sets (a) and (b).  Starting with a twisted cube, we define a twisted composition algebra. 
In fact, the construction works over $\mathbb Z$, and can be tied to 
Bhargava's description as follows.  Let $(I_1,I_2,I_3)$ be a triple of ideals in a quadratic order $A$ such that $I_1\cdot I_2\cdot I_3 =A$. 
 Let $N(I)$ denote the norm of the ideal $I$, and $z\mapsto \bar z$ denote the action of the non-trivial automorphism of the 
 \' etale quadratic $\mathbb Q$-algebra containg $A$. Let 
\[  C =I_1 \oplus I_2\oplus I_3. \]
Then $C$ is  a twisted composition algebra with the quadratic form 
  $Q: C \rightarrow \mathbb Z \times \mathbb Z\times \mathbb Z$  defined by 
\[ 
Q(z_1,z_2,z_3)=\left(\frac{N(z_1)}{N(I_1)}, \frac{N(z_2)}{N(I_2)},\frac{N(z_3)}{N(I_3)}\right)  
\]
and the quadratic map   $\beta: C \rightarrow C$ defined by
\[ 
\beta(z_1,z_2,z_3)= (\bar z_2\bar z_3 N(I_1) , \bar z_3 \bar z_1 N(I_2),\bar z_1 \bar z_2N(I_3)).  
\]

\vskip 5pt 

The key parts of the paper are as follows. In order to prove the correspondence of  generic $M_E(F)$-orbits and twisted composition algebras, 
 we  give a Galois cohomological argument in Theorem \ref{T:main-orbit}, based on the observation that the stabiliser of a distinguished cube is isomorphic to the automorphism group of a distinguished twisted composition algebra. This gives a conceptual explanation for the existence of the bijection. However, for arithmetic applications (such as Bhargava's), it is essential to have an explicit description of the bijection.  This is done in two steps. Firstly, after reviewing the theory of twisted composition algebras, we prove  in Proposition  \ref{P:reduced-basis} that every twisted composition algebra $C$ of $E$-dimension 2 has a reduced basis, i.e. a basis of the form $\{v, \beta(v)\}$ for some $v\in C$. Secondly,  by re-interpreting Bhargava's work in the framework of twisted composition algebras in Section \ref{S:explicit}, we attach  to every generic $E$-twisted cube  a twisted composition algebra together with a good basis. In this correspondence, changing the cube by another in the same $M_E(F)$-orbit corresponds to changing the good basis. Since reduced bases are good, every twisted composition algebra is obtained in this construction. 
\vskip 5pt
 
  We also consider  $\tilde{M} =  M\rtimes S_3$ and its twisted form $\tilde M_E$. In this case generic 
 $\tilde M_E(F)$-orbits correspond to the $F$-isomorphism classes of objects in (b) and (c). 
The isomorphisms of the stabilizer groups in (ii) lead us to another description of $T_{E,K}$,  which we view as an {\em exceptional Hilbert 90} theorem. This is the topic of Section \ref{S:H90}. 
We conclude the paper by illustrating the main results in the case where $F$ is a local field.
 
\vskip 10pt 

\section{\bf \'{E}tale Cubic Algebras}

Let $F$ be a field of characteristic different from $2$ and $3$. Let $\bar F$ be a separable closure of $F$,  with the absolute Galois group  $\Gal(\overline{F}/F)$. 
\vskip 5pt

\subsection{\bf \'Etale cubic algebras.}

An \'{e}tale cubic algebra is an $F$-algebra $E$ such that $E \otimes_F \overline{F} \cong \overline{F}^3$. More concretely, an \'{e}tale cubic $F$-algebra is of the form:
\[ E = \begin{cases}
\text{$F \times F \times F$;} \\
\text{$F \times K$, where $K$ is a quadratic field extension of $F$;} \\
\text{ a cubic field.} \end{cases} \]
Since the split algebra $F \times F \times F$ has 
automorphism group $S_3$ (the symmetric group on 3 letters),
the isomorphism classes of \'{e}tale cubic algebras $E$ over $F$ are naturally classified by the 
pointed cohomology set $H^1(F, S_3)$, or more explicitly by the
set of conjugacy classes of  homomorphisms 
\[  \rho_E : \Gal(\overline{F}/F) \longrightarrow S_3. \]
\vskip 10pt

\subsection{\bf Discriminant algebra of $E$.} 

By composing the homomorphism $\rho_E$ with the sign character of $S_3$, we obtain a quadratic character (possibly trivial) of $\Gal(\overline{F}/F)$ which corresponds to an \'{e}tale quadratic algebra $K_E$. We call $K_E$ the {\em discriminant algebra} of $E$. To be concrete, 
\[  K_E = \begin{cases}
F \times F, \text{  if $E = F^3$ or a cyclic cubic field;} \\
K, \text{  if $E = F \times K$;} \\
\text{the unique quadratic subfield in the Galois closure of $E$ otherwise.} 
\end{cases} \] 

\vskip 10pt

\subsection{\bf Twisted form of $S_3$.}

Fix an \'{e}tale cubic $F$-algebra $E$. 
Then, via the associated homomorphism $\rho_E$, $\Gal(\overline{F}/F)$ acts on $S_3$ (by inner automorphisms) and thus define a twisted form
$S_E$ of the finite constant group scheme $S_3$.  For any commutative $F$-algebra $A$, we have
\[  S_E(A) = \Aut_A(E \otimes_F A). \]
\vskip 10pt

\subsection{\bf Quadratic map $\#$.} 
Given an \'etale cubic $F$-algebra,
let $N_E : E \longrightarrow F$ be the norm map on $E$ and let $Tr_E :  E \longrightarrow F$ be the trace map. 
Then $N_E$  is a cubic form and $Tr_E$ is a linear form on $E$. There is a quadratic map
\[  \#:  E \longrightarrow E \]
such that 
\[  a^{\#} \cdot a = a \cdot a^{\#}  = N_E(a) \quad \text{for $a \in E$.} \]
It has an associated symmetric bilinear map
\[  a \times b :=  (a+b)^{\#}  - a^{\#} - b^{\#}. \]
For the split algebra $F^3$, we have:
\[    N(a_1, a_2, a_3) = a_1a_2a_3, \quad  Tr(a_1,a_2, a_3)  = a_1+a_2+a_3, \quad (a_1, a_2, a_3)^{\#} = (a_2a_3, a_3a_1, a_1 a_2). \]

We note the following identity in $E$:
 \begin{equation} \label{E:curious}
    (f \times y) y + f y^{\#} = Tr_{E/F}(fy^{\#}).
 \end{equation}
 This curious identity can be checked in $E \otimes_F \overline{F} \cong \overline{F}^3$; we leave it as an interesting exercise for the reader. 
 \vskip 10pt

\section{\bf Twisted Composition Algebras}

In this section, we introduce the $E$-twisted composition algebra of dimension $2$ over $E$. This notion was introduced by Springer, and the two standard (perhaps only) references, covering many topics of this paper, are {\em The Book of Involutions} \cite{KMRT} and {\em Octonions, Jordan Algebras and Exceptional Groups} \cite{SV}.  Twisted composition algebras are treated in \cite[Chapter VIII, \S 36]{KMRT} and \cite[Chapter 4]{SV}.

\vskip 5pt

\subsection{\bf Twisted composition algebras.} 
A twisted composition algebra over $F$ is a quadruple $(E, C ,Q,\beta)$ where
\vskip 5pt

\begin{itemize}
\item $E$ is an \'etale cubic $F$-algebra;

\item $C$ is an $E$-vector space equipped with a nondegenerate quadratic form $Q$, with associated symmetric bilinear form $b_Q(v_1,v_2) = Q(v_1+v_2) - Q(v_1) -Q(v_2)$;
\vskip 5pt

\item $\beta: C \longrightarrow C$ is a quadratic map such that
\[ \beta(a v) = a^{\#}\cdot  \beta(v)  \quad \text{and} \quad Q(\beta(v)) = Q(v)^{\#},  \]
for every $a \in E$ and $v\in C$. 
\vskip 5pt

\item if we set
\[  N_C(v) :=  b_Q(v,\beta(v)), \]
then $N_C(v) \in F$, for every $v\in C$.  
 \end{itemize}
For a fixed $E$, we shall call $(C,Q, \beta)$ an $E$-twisted composition algebra (over $F$), and the cubic form $N_C$ the norm form of $C$.
Frequently, for ease of notation, we shall simply denote this triple by $C$, suppressing the mention of $Q$ and $\beta$. 
\vskip 5pt

\subsection{\bf Morphisms.}
Given twisted composition algebras $(E, C, Q, \beta)$ and $(E', C', Q', \beta')$, an $F$-morphism
of twisted composition algebras is a pair
$(\phi, \sigma) \in \Hom_F(C,C') \times \Hom_F(E, E')$ 
  such that 
  \[  \phi (av)  = \sigma(a) \cdot \phi(v) \]
  for $v \in C$ and $a \in E$, and  
 \[  \phi \circ \beta = \beta' \circ \phi \quad \text{and} \quad \sigma \circ Q  = Q' \circ \phi. \]
 In particular,we have  the automorphism group  $\Aut_F(E, C,Q,\beta)$.
 The second projection gives a natural homomorphism
\[ 
\Aut_F(E, C,Q,\beta) \rightarrow S_E. 
\]
The kernel of this map is the subgroup $\Aut_E(C, Q,\beta)$ consisting of those $\phi$ which are $E$-linear; we shall call these $E$-morphisms.  
\vskip 5pt

\subsection{\bf $\Aut_F(E,C)$-action and isomorphism classes.}  \label{SS:isom}
Let us fix an $E$-vector space $C$ and let $\Aut_E(C)$ be the automorphism group of $C$ as an $E$-vector space.  Let
\[  \Aut_F(E, C) = \{  (g, \sigma) \in \Aut_F(C) \times \Aut_F(E): g \circ \lambda  = \sigma(\lambda) \cdot g \, \text{for all $\lambda \in E$} \}. \]
This is the group of $E$-sesquilinear automorphisms of $C$. The second projection induces  a short exact sequence
\[  \begin{CD} 
1 @>>> \Aut_E(C)  @>>> \Aut_F(E,C) @>>> S_E @>>> 1. \end{CD} \]
This short exact sequence is split. Indeed, the choice of an $E$-basis for $C$ gives a splitting, with $S_E$ acting on the coordinates with respect to the basis. 
\vskip 5pt

Now if $(C, Q,\beta)$ is an $E$-twisted composition algebra, then for any $(g, \sigma) \in \Aut_F(E,C)$, the triple \[  (C', Q', \beta')  = (C, \sigma \circ Q \circ g^{-1},  g \circ \beta \circ g^{-1}) \]
 is also an $E$-twisted composition algebra. The norm forms are related  by:
\[  N_{C'}  =  N_C \circ g^{-1}. \]
Moreover, we have:
\[  (g ,\sigma) \in \Hom_F( (E, C, Q, \beta),  (E, C', Q',  \beta')). \]

\vskip 5pt

Thus the map $(Q, \beta) \mapsto (Q',\beta')$ defines an action of $\Aut_F(E, C)$ on the set of pairs $(Q, \beta)$ which define an $E$-twisted composition algebra structure on $C$.
 The orbits of such pairs under $\Aut_F(E, C)$ are precisely the $F$-isomorphism
classes of $E$-twisted composition algebra of a given $E$-dimension $\dim_E C$, and the stabiliser of a given pair $(Q,\beta)$ is precisely the automorphism group $\Aut_F(E,C, Q,\beta)$.
Similarly, 
the set of orbits under $\Aut_E(C)$ is the set of $E$-isomorphism classes of such $E$-twisted composition algebras, and the stabiliser of a particular $(Q,\beta)$ is $\Aut_E(C, Q,\beta)$.
\vskip 5pt
 
\subsection{\bf Dimension $2$ case.}   \label{SS:example}
 It is known, by Corollary 36.4 in \cite{KMRT},  that for any $E$-twisted composition algebra $(C,Q,\beta)$, $\dim_E C= 1$, $2$, $4$ or $8$. 
 We shall only be interested in the case when $\dim_E C = 2$.  
 \vskip 5pt

   We give an example that will feature prominently in this paper. 
 We set $C_E= E\oplus E$, and define $Q$ and $\beta$ by 
 \[ 
 Q(x, y) =x\cdot y \text{ and } \beta(x,y)=(y^{\#}, x^{\#}) 
 \] 
 for every $(x,y)\in E\oplus E$.  
 It is easy to check that this defines an $E$-twisted composition algebra over $F$, with the norm form
 \[   N_C(x, y) = N_E(x) + N_E(y). \]
\vskip 5pt

 The group of automorphisms of this $E$-twisted composition algebra is easy to describe. 
 Let $E^1$ be the set of elements $e$ in $E$ such that $N(e)=e\cdot e^{\#}= 1$. For every element $e\in E^1$ we have an
 $E$-automorphism $i_e$ defined by $i_e(x,y)=(ex,e^{\#}y)$. We also have an $E$-automorphism $w$ defined by 
 $w(x,y)=(y,x)$. The group of $E$-automorphisms is 
 \[ 
 \Aut_E(C_E, Q,\beta) = E^1 \rtimes \Z/2\Z 
 \] 
 and the group of $F$-automorphisms is 
 \[ 
 \Aut_F(C_E, Q,\beta) = (E^1 \rtimes \Z/2\Z )\rtimes S_E = E^1 \rtimes (\Z/2\Z \times S_E). 
 \] 
If $E=F\times F\times F$, we denote the corresponding twisted composition algebra by $C_0 = (C_0, Q_0, \beta_0)$ and refer to it as the split twisted composition algebra. In this case, $E^1$ consists of $(t_1,t_2,t_3)$ such that  $t_1 t_2 t_3=1$, so that 
\[  \Aut_{E}(C_0, Q_0, \beta_0) \cong \mathbb{G}_m^2 \rtimes \Z/2\Z. \]
Observe that there is a natural splitting
\begin{equation} \label{E:split} 
  S_3 \times \Z/2\Z \longrightarrow   \Aut_F(C_0, Q_0, \beta_0). \end{equation}
  
 \vskip 10pt
\subsection{\bf Identitites.}
It follows by   \cite[Prop. 36.3]{KMRT} that if $(E, C, Q,\beta)$ is a twisted composition algebra over $F$, then $ C \otimes_F \overline{F}$ is isomorphic to $C_0 \otimes_F \overline{F}$. 
This fact is useful for verifying polynomial identities in $C$. Indeed, any polynomial identity in $C$ may be verified over $\overline{F}$ and thus just needs to be checked in $C_0$. In the following lemma, we list some useful identities which may be checked in this manner.
\vskip 5pt

\begin{lemma} \label{L:ID}
Let $(E, C,Q,\beta)$ be a twisted composition algebra over $F$. Then one has
\begin{equation} \label{E:beta2}
\beta^2(v)  = N_C(v) v - Q(v) \beta(v), \end{equation}
and
\begin{equation} \label{E:beta-reduced}
  \beta(x v + y \beta(v))  = (y^{\#} N(v) - ( -Q(v) x) \times y)\cdot  v + (x^{\#}  - Q(v)y^{\#}) \cdot \beta(v) \end{equation}
for any $v \in C$ and $x,y, \in E$. 
\end{lemma}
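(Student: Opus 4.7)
The strategy, signposted by the remark immediately preceding the lemma, is to reduce both identities to the split case by base change to $\overline{F}$. Since $C \otimes_F \overline{F} \cong C_0 \otimes_F \overline{F}$ and both identities are polynomial in the arguments, it suffices to verify them in the concrete model $C_0 = E \oplus E$ with $E = \overline{F}^3$, $Q(u,w) = uw$, $\beta(u,w) = (w^{\#}, u^{\#})$, and $N_C(u,w) = N(u) + N(w)$. This reduces everything to componentwise scalar identities in $\overline{F}$.

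For \eqref{E:beta2} the computation is immediate: writing $v = (a,b)$, iteration of $\beta$ gives $\beta^2(v) = ((a^{\#})^{\#}, (b^{\#})^{\#}) = (N(a)\, a, N(b)\, b)$ by the classical cubic identity $(u^{\#})^{\#} = N(u) u$. On the other hand $Q(v) \beta(v) = ab \cdot (b^{\#}, a^{\#}) = (a N(b), b N(a))$ while $N_C(v) v = ((N(a)+N(b)) a, (N(a)+N(b)) b)$, so $N_C(v) v - Q(v) \beta(v)$ matches $\beta^2(v)$ coordinate by coordinate.

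For \eqref{E:beta-reduced} the approach is the same but the bookkeeping is more intricate. One writes $\beta(xv+y\beta(v)) = ((xb + ya^{\#})^{\#}, (xa + yb^{\#})^{\#})$ and expands each coordinate using the general cubic identities $(u+w)^{\#} = u^{\#} + w^{\#} + u \times w$ and $(uw)^{\#} = u^{\#} w^{\#}$, producing a pure $v$-part, a pure $\beta(v)$-part, and cross terms of the form $(xb) \times (ya^{\#})$ or $(xa) \times (yb^{\#})$. The main obstacle, as I see it, will be reconciling these cross terms with the coefficients $y^{\#} N_C(v) - (-Q(v) x) \times y$ of $v$ and $x^{\#} - Q(v) y^{\#}$ of $\beta(v)$ predicted on the right-hand side: one needs an auxiliary rewriting such as $(xb) \times (ya^{\#}) = a \cdot ((abx) \times y)$, which may be verified componentwise in $\overline{F}^3$ or extracted from the curious identity \eqref{E:curious}. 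Once such a rewriting is in place, the remaining comparison of coefficients is routine.
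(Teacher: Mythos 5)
Your proof is correct and follows exactly the verification strategy the paper itself signposts in the remark preceding the lemma (``any polynomial identity in $C$ may be verified over $\overline{F}$ and thus just needs to be checked in $C_0$''); the paper then declines to carry out the computation and simply cites \cite[Lemmas 4.1.3 and 4.2.7]{SV}. The auxiliary identity you isolate, $(xb) \times (ya^{\#}) = a \cdot ((abx) \times y)$, is indeed what makes the cross-terms match and does check out componentwise in $\overline{F}^3$; note however that it is not the same as identity (\ref{E:curious}), whose role in the paper is rather to translate between the paper's form of (\ref{E:beta-reduced}) and the slightly different form in \cite{SV}.
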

 
 \vskip 5pt
 
 It follows from (\ref{E:beta2}) that $Q$ is in fact determined by $\beta$ in a twisted composition algebra. 
 The proof of these identities can be found in \cite[Lemmas 4.1.3 and  4.2.7]{SV}. It is interesting to note that (\ref{E:beta-reduced})  looks slightly different from that in \cite[Lemma 4.2.7]{SV}, but is equivalent to the form we present here by the identity (\ref{E:curious}).
 \vskip 5pt
 
  \subsection{\bf Reduced basis.}  \label{S:reduced-basis}
If $\dim_EC =2$, we call an $E$-basis of $C$ of the form $\{v , \beta(v) \}$ a {\rm reduced basis} of $C$. We note:
 \vskip 5pt
 
 \begin{prop}  \label{P:reduced-basis}
Let $(C,Q,\beta)$ be an $E$-twisted composition algebra.
\vskip 5pt

\noindent (i) For $v \in C$, let 
\[ \Delta_C(v) = N_C(v)^2- 4 \cdot N_E(Q(v)) \in F. \]
 Then $\{ v, \beta(v)\}$ is an $E$-basis of $C$ if and only if $\Delta_C(v) \ne 0$. 
\vskip 5pt

\noindent (ii)  The degree $6$ homogeneous polynomial $\Delta_C$ factors over $F$ as:
\[  \Delta_C =  a \cdot P^2 \]
with $a \in F^{\times}$ and $P$ an absolutely irreducible homogeneous polynomial of degree $3$ over $F$.
The square class of $a$ is uniquely determined, 
and for any $g \in \Aut_E(C,Q,\beta)$,  \begin{equation} \label{E:P}
  P(gv)  =  \begin{cases}
   P(v),\text{  if $g \in \Aut_E(C,Q,\beta)^0$;} \\
   - P(v), \text{  if $g \notin  \Aut_E(C,Q,\beta)^0$.} \end{cases}
   \end{equation}
 \vskip 5pt
 
\noindent (iii) The algebra $(C, Q,\beta)$ has a reduced basis. 
\vskip 5pt

\noindent (iv) Let $\{v' , \beta(v')\}$ be another reduced basis of $C$. Let $g \in \Aut_E(C)$ be such that $g(v) = v'$ and $g(\beta(v)) = \beta(v')$. Then $\det(g)  \in F^{\times}$.
 
\end{prop}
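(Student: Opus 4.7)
The plan is to exploit the split model $C_0 = E \oplus E$ with $Q_0(x,y) = xy$ and $\beta_0(x,y) = (y^{\#}, x^{\#})$: since $C \otimes_F \overline{F}$ is isomorphic to $C_0 \otimes_F \overline{F}$, polynomial identities and the factorization of $\Delta_C$ can be read off from $C_0$, while (i) and (iv) reduce to a computation of the $E$-Gram matrix of $b_Q$ on $\{v, \beta(v)\}$.  For (i) itself, using $Q(\beta(v)) = Q(v)^{\#}$ and $N_C(v) = b_Q(v, \beta(v))$, this Gram matrix is
\[ G = \begin{pmatrix} 2Q(v) & N_C(v) \\ N_C(v) & 2Q(v)^{\#} \end{pmatrix}, \qquad \det(G) = 4 N_E(Q(v)) - N_C(v)^2 = -\Delta_C(v). \]
If $v, \beta(v)$ are $E$-dependent then $\det(G) = 0$; if independent, they span $C$ (since $\dim_E C = 2$) and $G$ is invertible by nondegeneracy of $Q$.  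Hence $\{v, \beta(v)\}$ is an $E$-basis if and only if $\Delta_C(v) \neq 0$.

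For (ii), direct calculation on $C_0$ gives $N_{C_0}(x,y) = N_E(x) + N_E(y)$ and $N_E(Q_0(x,y)) = N_E(x) N_E(y)$, so $\Delta_{C_0}(x,y) = (N_E(x) - N_E(y))^2$ with $P_0 := N_E(x) - N_E(y)$ absolutely irreducible of degree $3$.  Therefore over $\overline{F}$ one has $\Delta_C = \tilde P^2$ for some absolutely irreducible cubic $\tilde P$, and the cocycle $\sigma \mapsto \tilde P^{\sigma}/\tilde P \in \{\pm 1\}$ determines a square class $a \in F^{\times}/(F^{\times})^2$; setting $P := \tilde P / \sqrt{a}$ produces $P \in F[C]$ with $\Delta_C = a P^2$.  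For $g \in \Aut_E(C, Q,\beta)$, invariance of $N_C$ and $N_E \circ Q$ forces $\Delta_C(gv) = \Delta_C(v)$, so $g \mapsto P(gv)/P(v) \in \{\pm 1\}$ is a character of $\Aut_E(C, Q, \beta)$ which factors through $\pi_0$.  To see it is non-trivial off the identity component, it suffices to exhibit a single element outside the identity component on which it equals $-1$: the split swap $w(x,y) = (y,x) \in \Aut_E(C_0, Q_0, \beta_0)$ does the job, since it negates $P_0$.

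For (iii), part (ii) shows that $\Delta_C$ is a nonzero polynomial on the $6$-dimensional affine $F$-space $C$, so $U := \{v \in C : \Delta_C(v) \neq 0\}$ is a nonempty Zariski-open $F$-subvariety.  When $F$ is infinite, $U(F) \neq \emptyset$ is automatic; when $F$ is finite, the complement $\{P = 0\}$ is a geometrically irreducible cubic hypersurface in $\A^6$, so Lang-Weil guarantees $U(F) \neq \emptyset$, with a direct verification on $C_0$ dispatching any exceptional small-cardinality cases.  Any $v \in U(F)$ then yields a reduced basis by (i).

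For (iv), write $v' = av + b\beta(v)$ with $a, b \in E$.  Identity (\ref{E:beta-reduced}) gives
\[ \beta(v') = \bigl( b^{\#} N_C(v) + (Q(v) a) \times b \bigr) v + \bigl( a^{\#} - Q(v) b^{\#} \bigr) \beta(v), \]
so the matrix of $g$ in the basis $\{v, \beta(v)\}$ is
\[ M = \begin{pmatrix} a & b^{\#} N_C(v) + (Q(v) a) \times b \\ b & a^{\#} - Q(v) b^{\#} \end{pmatrix}. \]
Expanding $\det(M)$ produces the mixed term $Q(v) a b^{\#} + b \cdot \bigl( (Q(v) a) \times b \bigr)$, which collapses to $Tr_{E/F}(Q(v) a b^{\#})$ by identity (\ref{E:curious}) applied with $f = Q(v) a$ and $y = b$, and yields
\[ \det(g) = N_E(a) - N_E(b) N_C(v) - Tr_{E/F}\bigl( Q(v) a b^{\#} \bigr) \in F, \]
which is nonzero because $\{v', \beta(v')\}$ is again a basis.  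The main obstacle is precisely this last collapse: the individual mixed terms live in $E$ but not a priori in $F$, and it is the cubic identity (\ref{E:curious}) that forces their sum to descend to $F$.
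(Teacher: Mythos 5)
Your treatment of (i), (ii), and (iv) follows the same route as the paper's: the Gram matrix computation for (i), the split-model factorization $\Delta_{C_0} = (N_E(x) - N_E(y))^2$ followed by Galois descent for (ii), and the use of identity (\ref{E:beta-reduced}) together with (\ref{E:curious}) to collapse the mixed term in the determinant for (iv). Your spelled-out argument in (ii) that $g \mapsto P(gv)/P(v)$ is a $\{\pm 1\}$-valued character factoring through $\pi_0(\Aut_E(C,Q,\beta))$, detected as nontrivial by the swap $w$, is a correct completion of the step the paper "leaves to the reader."

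The one place you diverge is (iii), and the divergence introduces a gap. The paper's argument is simply that $P$ is a nonzero polynomial of degree $3$ on $C \cong F^6$, and since $\charact(F) \ne 2,3$ forces $|F| \ge 5 > 3$, the elementary degree bound (a nonzero polynomial of degree $d$ over a field with more than $d$ elements is not identically zero as a function) gives a $v$ with $P(v) \ne 0$. You instead invoke Lang--Weil for finite $F$ and then appeal to ``a direct verification on $C_0$'' to cover small cardinalities. Two problems. First, Lang--Weil as usually stated is asymptotic (or needs effective constants), so it does not cleanly dispose of $q=5$ without additional work. Second, and more seriously, the proposed fallback is a non sequitur: over a finite field $F$ the algebra $C$ need not be isomorphic to $C_0$ — the isomorphism is only guaranteed over $\overline{F}$ — so a computation in $C_0$ says nothing about whether $P$ vanishes identically on the actual $C$ you are given. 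As it happens there are no exceptional small fields at all, because $\charact(F) \ne 2,3$ already excludes $\F_2, \F_3, \F_4$; the degree bound alone handles every case uniformly, and the Lang--Weil detour is both unnecessary and, as written, incomplete.
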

\vskip 5pt

 \begin{proof}
 (i)  The set $\{v, \beta(v) \}$ is linearly independent if and only if the matrix of the symmetric bilinear form $b_Q$ 
 with respect to $\{v, \beta(v)\}$ has nonzero determinant.  Since
 \[  b_Q(v,v)  = 2Q(v), \quad  b_Q(\beta(v),\beta(v)) = 2 Q(v)^{\#} \quad \text{and} \quad b_Q(v,\beta(v)) = N_C(v), \]
 it follows that the determinant  is $-\Delta_C(v)$.
 \vskip 5pt
 
\noindent (ii) We first work over $\overline{F}$, in which case we may assume that
 $C = E^2$, with $E =\overline{F}^3$,  $Q(x,y)  = xy$ and $\beta(x,y)  = (y^{\#}, x^{\#})$. 
 Then  $N_C(x,y)  = N_E(x)  + N_E(y)$.
So 
\[  \Delta_C(x,y)  =  (N_E(x) + N_E(y))^2 - 4 N_E(x)  N_E(y) = (N_E(x)  - N_E(y))^2. \]
The cubic polynomial  $P_0(x,y) = N_E(x)  - N_E(y) = x_1x_2x_3 - y_1y_2y_3$ (with $x = (x_1,x_2,x_3) \in \overline{F}^3$) is easily seen to be irreducible over $\overline{F}$. 
\vskip 5pt

To descend back to $F$,  we note that for any $\sigma \in {\rm Gal}(\overline{F}/F)$,  $\sigma(P_0)  = \pm P_0$  by unique factorisation of polynomials over $\overline{F}$.  Thus there is a quadratic character $\chi_K$ of ${\rm Gal}(\overline{F}/F)$ such that $\sigma(P_0) =  \chi_K(\sigma) \cdot P_0$. If $K$ is the quadratic \'etale $F$-algebra associated to $\chi_K$, represented by $a \in F^{\times}$, then  we see that $P = \sqrt{a}^{-1} \cdot P_0$ is defined over $F$ and $\Delta_C= a \cdot P^2$. 
\vskip 5pt

It is clear that the square class of $a$ is uniquely determined. The equation (\ref{E:P}) can be checked over $\overline{F}$; we leave it to the reader.
\vskip 10pt

\noindent (iii)  Since $F$ has more than 3 elements (as we assumed that ${\rm char}(F) \ne 2$ or $3$), there exists $v \in C$ such that $P(v) \ne 0$.
 Hence $\Delta_C(v)  \ne 0$ by (ii) and $\{v, \beta(v) \}$ is a reduced basis by (i).

 \vskip 10pt
 \noindent (iv)  If $v' = xv + y\beta(v)$, then $\beta(v')$ is given by (\ref{E:beta-reduced}). So the transition matrix between the bases $\{v, \beta(v) \}$ and $\{v', \beta(v') \}$ is given by:
 \[  g = \left( \begin{array}{cc}
 x & y^{\#} N_C(v) - ( -Q(v) x) \times y \\
 y & x^{\#}  - Q(v)y^{\#} \end{array} \right). \]
 Hence
 \begin{align}
  \det(g) &= N_E(x)  - N_E(y) N_C(v) + ( -Q(v) x y^{\#}+  ((-Q(v)x) \times y) y) \notag \\
  &= N_E(x)  - N_E(y) N_C(v)  - Tr_E(Q(v)xy^{\#}) \in F \notag
  \end{align}
  where the second equality follows by applying (\ref{E:curious}).  
 \end{proof}
\vskip 5pt

We note that Proposition \ref{P:reduced-basis}(i) and (iii) is contained in \cite[Lemma 4.2.12]{SV}, but (ii) seems to be new; at least we are not able to find it in \cite{SV} or \cite{KMRT}. The results of the proposition will be used later in the paper. 
\vskip 5pt

\subsection{\bf The quadratic algebra $K_C$.} 
An immediate consequence of the proposition is that to very twisted composition algebra $(E,C,Q,\beta)$ with $\dim_E C =2$, we can associate an \'etale quadratic algebra $K_C$ which is given by the square-class of $\Delta_C(v) \in F^{\times}$ as in the proof of Proposition \ref{P:reduced-basis}(ii). 
Thus we have a map
\begin{equation} \label{E:K_C}
  \{ \text{twisted composition $F$-algebras with $E$-rank $2$} \} \longrightarrow \{ \text{\'etale quadratic $F$-algebras} \}. \end{equation} 
\vskip 5pt

For example,  if $C_E$ is the twisted composition algebra introduced in \S \ref{SS:example}  then 
\[\Delta_C(x,y)  =  (N_E(x) - N_E(y))^2\] and the quadratic algebra associated to $C_E$  is the split algebra $F \times F$. 
\vskip 5pt

  \vskip 10pt

 \subsection{\bf Cohomological description.}
 We come now to the classification of twisted composition algebras $C$  of rank $2$ over $E$.  
  Since every such $C$ is isomorphic to $C_0$ over $\overline{F}$, 
 the set of isomorphism classes of twisted composition algebras over $F$
is classified by the pointed cohomology set
\[  H^1( F, \Aut_F(F^3, C_0, Q_0, \beta_0)).\]
We have seen that  $\Aut_F(F^3, C_0, Q_0, \beta_0) \cong \mathbb{G}_m^2 \rtimes (\Z/2\Z \times S_3)$, and so 
 there is a natural map
\begin{equation}  \label{E:eqc}
 H^1( F, \Aut_F(F^3, C_0, Q_0, \beta_0)) \longrightarrow H^1(F, \Z/2\Z) \times H^1(F, S_3). \end{equation}
Composing this with the first or second projection, we obtain natural maps 
\begin{equation} \label{E:eq}
 H^1( F, \Aut_F(F^3, C_0, Q_0, \beta_0)) \longrightarrow H^1(F, \Z/2\Z)  = \{ \text{\'etale quadratic $F$-algebras} \}. \end{equation}
and
\begin{equation} \label{E:ec}
 H^1( F, \Aut_F(F^3, C_0, Q_0, \beta_0)) \longrightarrow H^1(F, S_3). \end{equation}  
All these projection maps are surjective, because of the natural splitting in (\ref{E:split}). 
Indeed, (\ref{E:split}) endows each fiber of the maps in (\ref{E:eqc}), (\ref{E:eq}) and (\ref{E:ec}) with a distinguished point. We shall see in a moment that  the map in (\ref{E:eq}) is the map defined in (\ref{E:K_C}). 
 \vskip 5pt

 For an \'etale cubic F-algebra $E$ with associated cohomology class $[E] \in H^1(F, S_3)$,   
  the fiber of (\ref{E:ec})  over $[E]$ is precisely the set of $F$-isomorphism classes of $E$-twisted composition algebras. Moreover, a Galois descent argument shows that the distinguished point in this fiber furnished by the splitting (\ref{E:split}) is none other than
the $E$-twisted composition algebra $C_E$ constructed in \S \ref{SS:example}.  
\vskip 5pt

Using $C_E$ as the base point, the fiber in question is identified naturally with
the set $H^1(F, \Aut_E(C_E, Q, \beta))$ modulo the natural action of $S_E(F)$ (by conjugation). 
The cohomology set $H^1(F, \Aut_E(C_E, Q, \beta))$ classifies the $E$-isomorphism classes of   $E$-twisted composition algebras $C$  over $F$, and the action of $S_E(F)$  is given by 
\[  \sigma:  (C,Q, \beta) \mapsto (C \otimes_{E, \sigma} E,  \sigma \circ Q,  \beta) \]
 for $\sigma \in S_E(F)$.
  \vskip 5pt

\begin{lemma}
The maps defined by (\ref{E:K_C}) and (\ref{E:eq}) are the same.
\end{lemma}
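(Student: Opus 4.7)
The plan is to show that both maps compute the étale quadratic algebra attached to a single Galois character obtained from the defining cocycle of $C$. I will fix an isomorphism $\phi\colon C\otimes_F\bar F\xrightarrow{\sim} C_0\otimes_F\bar F$ of twisted composition algebras and work with the pullback $P_\phi := P_0\circ\phi$ of the polynomial $P_0(x,y) = N_{F^3}(x) - N_{F^3}(y)$ on $C_0$. From the proof of Proposition~\ref{P:reduced-basis}(ii), $P_\phi$ is an absolutely irreducible cubic polynomial on $C\otimes_F\bar F$ whose square equals $\Delta_C$, and the algebra $K_C$ of \eqref{E:K_C} is $F(\sqrt{a})$ where $a \in F^\times$ represents the square class defined by the sign character $\chi_C\colon\Gal(\bar F/F)\to\{\pm 1\}$ satisfying $\sigma(P_\phi) = \chi_C(\sigma)\cdot P_\phi$. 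On the other hand, the quadratic algebra furnished by \eqref{E:eq} is determined by the projection $\eta\circ c$ of the defining cocycle $c_\sigma = \phi\circ\sigma(\phi)^{-1}$ onto the $\mathbb Z/2\mathbb Z$-factor $\eta$ of the splitting $\Aut_F(F^3,C_0,Q_0,\beta_0)\cong\mathbb G_m^2\rtimes(\mathbb Z/2\mathbb Z\times S_3)$. The lemma thus reduces to the identity $\chi_C = \eta\circ c$.

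The key step will be a direct weight computation: $P_0$ transforms under $\Aut_F(F^3,C_0,Q_0,\beta_0)$ by the sign character $\eta$. I would verify this factor by factor using the explicit description in \S\ref{SS:example}. The torus $\mathbb G_m^2 = E^1$ acts via $i_e\colon(x,y)\mapsto(ex,e^\# y)$ with $N(e)=1$, so $P_0(ex, e^\# y) = N(e)\cdot N(x) - N(e^\#)\cdot N(y) = N(x) - N(y) = P_0(x,y)$; the factor $S_3$ acts diagonally by coordinate permutations of $F^3$ and preserves each of $N_{F^3}(x)$ and $N_{F^3}(y)$; and the generator of $\mathbb Z/2\mathbb Z$ is the swap $w\colon(x,y)\mapsto(y,x)$, which sends $P_0$ to $-P_0$. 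Together these give $P_0\circ g = \eta(g)\cdot P_0$ for every $g\in\Aut_F(F^3,C_0,Q_0,\beta_0)$, where $\eta$ takes values in $\{\pm 1\}$.

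With the weight computation in hand, the conclusion is a standard cocycle manipulation. Since $P_0$ is $F$-defined one has $\sigma(P_\phi) = P_0 \circ \sigma(\phi)$, and writing $\sigma(\phi) = c_\sigma^{-1}\phi$ (which follows directly from $c_\sigma = \phi\circ\sigma(\phi)^{-1}$) gives
\[ \sigma(P_\phi) \;=\; P_0\circ c_\sigma^{-1}\circ\phi \;=\; \eta(c_\sigma^{-1})\cdot P_\phi \;=\; \eta(c_\sigma)\cdot P_\phi, \]
so $\chi_C(\sigma) = \eta(c_\sigma)$, which is by definition the character representing the image of $[C]$ in $H^1(F,\mathbb Z/2\mathbb Z)$ under \eqref{E:eq}. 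The only place that really demands care is the weight computation for the torus factor, where the constraint $N(e) = 1$ is essential to make the two cubic norms transform by the same scalar; the remaining work is merely bookkeeping of cocycle conventions.
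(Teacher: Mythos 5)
Your proof is correct and takes essentially the same route as the paper's: both arguments reduce to showing that the absolutely irreducible cubic $P$ transforms by the sign character under the automorphism group, and then a short cocycle computation identifies $\chi_C$ with the $\Z/2\Z$-projection of the defining cocycle. The only real difference is that you carry out the weight computation for $P_0$ on the torus, $S_3$, and swap factors explicitly, whereas the paper invokes the transformation law (\ref{E:P}) from Proposition \ref{P:reduced-basis}(ii) and leaves its verification to the reader; your verification is precisely what (\ref{E:P}) asserts (with the bonus check that the $S_3$-factor also preserves $P_0$).
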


\begin{proof}
We fix the cubic algebra $E$ and let $C_E = (E^2, Q, \beta)$ be the distinguished $E$-twisted composition algebra introduced in \S \ref{SS:example}. Let $\Delta_C =  P^2$  be the homogeneous polynomials as given in Proposition  \ref{P:reduced-basis}(ii).
\vskip 5pt

Any $E$-twisted composition algebra $C'$ is given by a pair of tensors $(Q', \beta')$ on $E^2$ and there is an element $g \in \GL_2( E \otimes_F \overline{F})$ such that $g \cdot (Q, \beta)  = (Q', \beta')$. A 1-cocycle associated to $(Q', \beta')$ is given by
\[  a_{\sigma}  = g^{-1} \sigma(g) \in \Aut_{\overline{F}^3}(E^2,Q,\beta) \quad \text{ for $\sigma \in {\rm Gal}(\overline{F}/F)$.} \]
The corresponding $\Delta_{C'}$ is related to $\Delta_C$ by 
\[  \Delta_{C'}(v)  = \Delta_C(g^{-1} v). \]
\vskip 5pt

Now the quadratic algebra associated to $C'$ by (\ref{E:eq}) corresponds to the quadratic character 
\[  \chi: \sigma \mapsto  [a_{\sigma}]  \in \pi_0(\Aut_{\overline{F}^3}(E^2, Q,\beta))  = \Z/2\Z \]
of ${\rm Gal}(\overline{F}/F)$. By (\ref{E:P}), we thus have
\[  P(a_{\sigma}^{-1}  v)  = \chi(\sigma) \cdot P(v)  \]
for any $v \in (E \otimes_F \overline{F})^2$.

\vskip 5pt
On the other hand,  the quadratic algebra associated to $C'$ by (\ref{E:K_C}) is defined by $\sqrt{\Delta_{C'}(v)}$ for  any $v \in E^2$ such that $\Delta_{C'}(v) \ne 0$.  Since 
\[  \sqrt{\Delta_{C'}(v)}   = \sqrt{\Delta_C(g^{-1}v)}  = P(g^{-1}v), \]
we need to show that
\[  \sigma(P(g^{-1} v))  = \chi(\sigma) P(g^{-1}v). \]
But we have
\[  \sigma(P(g^{-1}v)) = P(\sigma(g)^{-1} v) = P(a_{\sigma}^{-1} g^{-1}v) = \chi(\sigma) \cdot P(g^{-1}v), \]
as desired.  This proves the lemma.
 \end{proof}
 \vskip 5pt

\subsection{\bf Tits construction.} Given an element 
\[  ([E], [K]) \in H^1(F, S_3) \times H^1(F, \Z/2\Z),\]
 we describe the composition algebras in the fiber of (\ref{E:eqc}) over $([E], [K])$.
  Note that by (\ref{E:split}), we have a distinguished point in this fiber. Now we have:

 \vskip 5pt

  \begin{prop}  \label{P:kmrt-quad} If $C$ is an $E$-twisted composition algebra, with the associated \'etale quadratic algebra $K$, 
  then we may identify $C$ with $E \otimes_F K$, such that 
\[  Q(x)  = e \cdot N_{E \otimes_F K /E}(x) \quad \text{ for some $e \in E^{\times}$} \]
and 
 \[  \beta(x) =  \overline{x}^{\#} \cdot e^{-1} \cdot \overline{\nu} \quad \text{ for some $\nu \in K$} \]
where $x \mapsto \overline{x}$ is induced by the nontrivial automorphism of $K$ over $F$. 
Moreover, we have:
\[  N_{E/F}(e)  =  N_{K/F}(\nu). \]
The distinguished point in the fiber of (\ref{E:eqc}) over $([E], [K])$ corresponds to taking $(e, \nu) = (1,1)$. 
 \end{prop}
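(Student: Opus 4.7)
My plan is to construct the identification explicitly from a reduced basis. By Proposition \ref{P:reduced-basis}(iii), $C$ admits a reduced basis $\{v,\beta(v)\}$, so $\Delta := \Delta_C(v) \in F^{\times}$; after a Zariski-generic adjustment of $v$ we may also arrange $e := Q(v) \in E^{\times}$. Fix $\mu \in K$ with $\mu^2 = \Delta$ (so $\bar{\mu} = -\mu$), set $L := E \otimes_F K$, and define the $E$-linear map
\[
\phi \colon L \longrightarrow C, \qquad \phi(a + b\mu) \;:=\; a\,v \;+\; b\bigl(2e\,\beta(v) - N_C(v)\,v\bigr) \qquad (a,b \in E).
\]
The identities $b_Q(v,v) = 2e$, $b_Q(v,\beta(v)) = N_C(v)$ and $b_Q(\beta(v),\beta(v)) = 2e^{\#}$ show immediately that $\phi(1)$ and $\phi(\mu)$ are $b_Q$-orthogonal with $Q(\phi(1)) = e$ and $Q(\phi(\mu)) = -e\Delta$; hence $\phi$ is an $E$-module isomorphism and $Q \circ \phi = e \cdot N_{L/E}$.

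Set $\nu := \tfrac{1}{2}(N_C(v) - \mu) \in K$. The norm identity $N_{K/F}(\nu) = \tfrac{1}{4}(N_C(v)^2 - \Delta) = N_{E/F}(e)$ is immediate from the definition of $\Delta_C$. It remains to check that $\phi$ intertwines $\beta$ with the Tits map $\beta_{e,\nu}(x) = \bar{x}^{\#}\,e^{-1}\,\bar{\nu}$. Since both sides are quadratic and satisfy $f(\lambda x) = \lambda^{\#} f(x)$ for $\lambda \in E$, the identity $\phi \circ \beta_{e,\nu} = \beta \circ \phi$ need only be verified at $1$, at $\mu$, and on the bilinearization $\beta_{e,\nu}(1+\mu) - \beta_{e,\nu}(1) - \beta_{e,\nu}(\mu)$. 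Each of these verifications unravels via formula (\ref{E:beta-reduced}) applied at $v$ or at $\phi(\mu) = -N_C(v)\,v + 2e\,\beta(v)$, together with the ambient identities $\mu^{\#} = \Delta$, $1 \times \mu = 2\mu$ and $e \times e = 2e^{\#}$, and in every case collapses to the defining equation $\Delta = N_C(v)^2 - 4N_E(e)$.

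Finally, specializing $(e,\nu) = (1,1)$ and $K = F \times F$ gives $L = E \times E$ with $\bar{\phantom{x}}$ equal to the coordinate swap, and the Tits formulas reduce directly to $Q(x,y) = xy$ and $\beta(x,y) = (y^{\#}, x^{\#})$, which is the model $C_E$ of \S\ref{SS:example}. This handles the distinguished-point assertion when $K$ is split, and the general case follows by noting that both the Tits construction with $(e,\nu) = (1,1)$ and the distinguished point furnished by (\ref{E:split}) are obtained from the split algebra $C_0$ by twisting against the same cocycle in $H^1(F, S_3 \times \Z/2\Z)$. The principal obstacle is the intertwining verification; it is conceptually straightforward but demands careful bookkeeping of $^{\#}$ and $\times$ on both $E$ and $L$. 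The minor issue of arranging $Q(v) \in E^{\times}$ is resolved by noting that the good locus of $v$ is Zariski-open and nonempty.
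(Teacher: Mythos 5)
Your proof is correct, and it takes a genuinely different route from the paper's. The paper establishes Proposition~\ref{P:kmrt-quad} by pure Galois descent: since $C \otimes_F \overline{F} \cong C_0 \otimes_F \overline{F}$, the identification $C \cong E \otimes_F K$ follows formally, and the shapes of $Q$ and $\beta$ are then deduced from identities that can be checked in the split algebra $C_0$; the constraint $\nu \in K$ comes from unwinding $N_C(x) \in F$. You instead build the isomorphism $\phi \colon L \to C$ explicitly from a reduced basis $\{v,\beta(v)\}$, which is essentially the computation the paper carries out later in the subsection ``Relation with Tits' construction,'' but run in the reverse direction. Your explicit route has the advantage of exhibiting the pair $(e,\nu)$ directly in terms of $(Q(v), N_C(v), \Delta_C(v))$, which is genuinely useful; the paper's descent argument is shorter and dodges the degenerate locus entirely.

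Two points warrant care. First, arranging $Q(v) \in E^{\times}$ in addition to $\Delta_C(v) \ne 0$ is a Zariski-open condition, but ``Zariski-open and nonempty'' is only automatically enough when $F$ is infinite; over a small finite field one needs a separate word (Lang's theorem gives $H^1(F, T_{E,K}) = 0$ there, so $C$ is the distinguished algebra and the formulas are immediate). Second, the reduction ``need only be verified at $1$, $\mu$, and the bilinearization'' does not follow from quadraticity together with $f(\lambda x) = \lambda^{\#} f(x)$ alone: those properties control $B_f(\lambda x, \lambda' x)$ on parallel vectors, but not the cross-terms $B_f(\lambda \cdot 1, \lambda' \mu)$ for independent $\lambda, \lambda' \in E$. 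What actually makes the check finite is that both sides have the specific bilinear cross-term structure supplied by (\ref{E:beta-reduced}) (respectively by $E$-bilinearity of $\times$ on $L$ over $K$ for $\beta_{e,\nu}$). With that observation, the verification at $1$, $\mu$, and the bilinearization does close the argument, and the three computations do collapse to $\Delta = N_C(v)^2 - 4 N_E(e)$ as you claim. So the proof works, but that intermediate reduction should be stated via (\ref{E:beta-reduced}) rather than attributed to $\#$-semilinearity alone.
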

\vskip 5pt

\begin{proof}
This proposition essentially follows by Galois descent. 
Indeed, a Galois descent argument, starting from the algebra $C_E$ introduced in \S \ref{SS:example}, shows that  $C$ can be identified with $E\otimes_F K$. Then $Q$ is necessarily of the form $e \cdot N_{E \otimes_F K/E}$ for some $e \in E^{\times}$. On the other hand,  we claim that for $x \in E \otimes_F K$ and $x_0 \in C$, one has
\[  \beta(x \cdot x_0) = \overline{x}^{\#} \cdot \beta(x_0). \]
Indeed, one can check this by going to $\overline{F}$, whence one is reduced to checking this identity in the split algebra $C_0$, where it is straightforward.  
This shows that $\beta$ is determined by $\beta(1) = \overline{\nu} \cdot e^{-1}$ for some $\nu \in E \otimes_F K$. However, the identity
\[  Q(1)^{\#}  = Q(\beta(1)) \]
implies that 
\[  \nu \cdot \overline{\nu} = N_{E \otimes_F K/E}(\nu) = N_{E/F}(e) \in F.  \]
The requirement that $N(x) \in F$ for all $x \in E \otimes_F K$ implies that
\[  Tr_{E \otimes_F K/E}( \overline{\nu} \cdot N_{E \otimes_F K/K}(x) )  \in F. \]
In particular, taking $x = 1$ and then a trace zero element $\delta \in K$ one obtains, respectively: 
\[  \nu + \overline{\nu}  \in F \quad \text{and} \quad \nu \delta + \overline{\nu \delta } \in F. \]
All these conditions imply that $\nu \in K$.
\vskip 5pt

Finally, it is easy to see by Galois descent that the distinguished point in the fiber over $([E] , [K])$
corresponds to $(e, \nu) = (1,1)$. 
\end{proof}

\vskip 5pt

The description of twisted composition algebras given in the above proposition is sometimes referred to as a Tits construction (though usually this terminology is reserved for the Jordan algebra associated to the above twisted composition algebra by Springer's construction, which is the subject matter of the next section). 

\noindent 
\vskip 5pt

\subsection{\bf Automorphism group.}
Using Proposition \ref{P:kmrt-quad}, it is not difficult to determine the automorphism group of any twisted composition algebra $C$. Indeed, if 
$C \cong  E \otimes_F K$ as in the proposition,  then the special orthogonal group
 \[  SO(C,Q) = \{ \lambda \in E \otimes_F K: N_{E \otimes K/E}(\lambda) = 1 \} \]
 acts $E \otimes K$-linearly on $C$ by multiplication and preserves $Q$. An element $\lambda \in SO(C,Q)$ preserves $\beta$ if and only if
 \[  \overline{\lambda}^{\#}   =  \lambda. \]
But $\lambda^{\#} = \lambda^{-1}$ since $N_{E \otimes K/E}(\lambda)  = \lambda \cdot \lambda^{\#} =1$. 
So 
\[  \Aut_E(C, Q, \beta) \cap SO(C,Q)  = \{ \lambda \in L = E \otimes K:  N_{L/E}(\lambda) =1 = N_{L/K}(\lambda) \} = T_{E,K}, \]
which is a 2-dimensional torus.
 Since we know the automorpshim group of the split twisted composition algebra $(C_0, Q_0, \beta_0)$, we see that
 \[  \Aut_E(C, Q,\beta)^0  =   T_{E,K} \]
 and $\Aut_E(C,Q,\beta)$ sits  in short exact sequences of algebraic groups as in Theorem \ref{T:main} (iii).

\vskip 5pt 

\subsection{\bf Cohomology of $T_{E,K}$.}
Using Proposition \ref{P:kmrt-quad} and the above description of $\Aut_E(C,Q,\beta)^0$, we can describe the fiber of the natural map
\[ H^1( F, \Aut_F(F^3, C_0, Q_0, \beta_0)) \longrightarrow H^1(F, \Z/2\Z) \times H^1(F, S_3) \]
over the element $([K] , [E]) \in  H^1(F, \Z/2\Z) \times H^1(F, S_3)$. 
Indeed, this fiber is equal to  
\[  \text{$H^1(F, T_{E,K})$ modulo the action of $S_E(F) \times \Z/2\Z$.} \]
 The cohomology group $H^1(T_{E,K})$  classifies twisted composition algebras with fixed $E$ and $K$, up to $E \otimes_F K$-linear isomorphism. With $L  = E \otimes_F K$, one has a short exact sequence of algebraic tori
\[  \begin{CD}
1 @>>> T_{E,K} @>>>  L^{\times}  @>N_{L/E} \times N_{L/K}>>   (E^{\times} \times K^{\times})^0 @>>> 1 \end{CD} \]
where 
\[  (E \times K)^0  = \{ (e, \nu) \in E^{\times} \times K^{\times}:  N_{E/F}(e) = N_{K/F}(\nu) \}. \]
The associated long exact sequence gives
\begin{equation}  \label{E:coho-T}
   H^1(F, T_{E,K})  \cong (E^{\times} \times K^{\times})^0/ {\rm Im} L^{\times}. \end{equation}
This isomorphism is quite evident in the context of Proposition  \ref{P:kmrt-quad}. Indeed, Proposition   \ref{P:kmrt-quad} tells us that any twisted composition algebra $C$ with invariants $(E,K)$ is given by an element $(e, \nu) \in (E^{\times} \times K^{\times})^0$. Any $L$-linear map from $C$ to another twisted composition algebra $C'$ with associated pair $(e' ,\nu')$ is given by multiplication by an element $a \in L^{\times}$, and this map is an isomorphism of twisted composition algebras if and only if 
\[  (e,\nu)  = (e' \cdot N_{L/E}(a),  \nu' \cdot N_{L/K}(a)).  \]
This is precisely what (\ref{E:coho-T}) expresses.
 
 \vskip 10pt

\section{\bf Springer's construction.}
We can now relate twisted composition algebras to Freudenthal-Jordan algebras. This construction is due to Springer. 
Our exposition follows  \S 38A, page  522,  in \cite{KMRT}.  
\vskip 5pt

\subsection{\bf Freudenthal-Jordan algebra of dimension $9$.}
A Freudenthal-Jordan algebra $J$ of dimension $9$ over $F$ is a Jordan algebra which is isomorphic over $\overline{F}$ to the Jordan algebra $J_0$ associated to the associative algebra $M_3(F)$ of $3 \times 3$-matrices, with Jordan product
\[  a \circ b = \frac{1}{2} \cdot (ab+ba). \]
 An element $a\in J$ satisfies a characteristic polynomial 
\[ 
X^3 - T_J(a) X^2 + S_J(a) X - N_J(a)\in F[X]. 
\] 
The maps $T_J$ and $N_J$ are called the trace and norm maps of $J$ respectively. The element 
\[ 
a^{\#}= a^2-T_J(a) a +S_J(a) 
\] 
is called the adjoint of $a$. It satisfies $a\cdot a^{\#}= N_J(a)$. The cross product of two elements $a,b\in J$ is defined by 
\[ 
a\times b =(a+b)^{\#} - a^{\#} - b^{\#}. 
\] 

\vskip 5pt

\subsection{\bf Cohomological description.}
The automorphism group of $J_0$ is $\PGL_3 \rtimes \Z/2\Z$, with $g \in \PGL_3$ acting by conjugation and the nontrivial element of $\Z/2\Z$ acting by the transpose: $a \mapsto a^t$. Thus, the isomorphism classes of Freudenthal-Jordan algebra of dimension $9$ is parametrized by the pointed set $H^1(F, \PGL_3 \rtimes \Z/2\Z)$, and there is a  exact sequence of pointed sets
\[  \begin{CD}
H^1(F, \PGL_3) @>f>>  H^1(F, \PGL_3 \rtimes \Z/2\Z) @>\pi>> H^1(F, \Z/2\Z)\\
 & & & &@| \\
  & &  &  & \{ \text{\'etale quadratic $F$-algebras}\}.  \end{CD} \]
The map $\pi$ is surjective 
 and the fiber of $\pi$ over the split quadratic algebra $F^2$ is the image of $f$. By  Proposition 39(ii) and Corollary 1 in \cite{Se}, page 52, the image of $f$ is  $H^1(F, \PGL_3)$ modulo a natural action of $\Z/2\Z$ (see \cite{Se}, page 52). Now the set $H^1(F, \PGL_3)$ parametrizes the set of central simple $F$-algebras $B$ of degree $3$, and the $\Z/2\Z$ action in question is $B \mapsto B^{op}$. Then the map $f$  sends  $B$ to the associated Jordan algebra.
\vskip 5pt

In general,  
for any \'etale quadratic $F$-algebra $K$, an element in the fiber of $\pi$ over $[K] \in H^1(F, \Z/2\Z)$ is the Jordan algebra $J_3(K)$ of $3 \times 3$-Hermitian matrices with entries in $K$. 
The automorphism group of $J_3(K)$ is an adjoint group $PGU_3^K \rtimes \Z/2\Z$.
Using $J_3(K)$ as the base point,  the fiber of $\pi$  over $[K] $ can then be identified with $H^1(F, PGU_3^K)$ modulo the action of $\Z/2\Z$ 
(by \cite{Se}, pages 50 and  52). By \cite{KMRT}, page 400, $H^1(F, PGU_3^K)$ has an interpretation as the set of isomorphism classes of pairs $(B_K, \tau)$ where 
\vskip 5pt
\begin{itemize}
\item $B_K$ is a central simple $K$-algebra of degree $3$,
\item  $\tau$ is an involution of the second kind on $B_K$,
\end{itemize}
Moreover the action of the non-trivial element $\tau_K \in \Aut(K/F) = \Z/2\Z$ is via the Galois twisting action: $B \mapsto B \otimes_{K, \tau_K}  K$,
 so that 
 \[  H^1(F, PGU_3^K) / \Z/2\Z \longleftrightarrow  \{ \text{$F$-isomorphism classes of $(B_K, \tau)$} \}.\] 
 Then the map $f$ sends $(B_K, \tau)$ to the Jordan algebra $B_K^{\tau}$ of $\tau$-symmetric elements in $B_K$. 
\vskip 5pt

If $J$ is a Freudenthal-Jordan algebra of dimension $9$, we will write $K_J$ for the \'etale quadratic algebra corresponding to $\pi(J)$. 
\vskip 5pt

\subsection{\bf Relation with twisted composition algebras.}
Fix an \'etale cubic $F$-algebra $E$ and a Freudenthal-Jordan algebra $J$.
Suppose  we have an algebra embedding
\[  i: E \hookrightarrow J. \]
Then, with respect to the trace form $T_J$, we have an orthogonal decomposition
\[  J = i(E) \oplus C \]
where $C = i(E)^{\perp}$. We shall identify $E$ with its image under $i$. Then 
for $e \in E$ and $v \in C$, one can check that $e \times v \in C$. Thus, setting
\[  e \circ v :=  -e \times v \]
 equips $C$ with the structure of an $E$-vector space. Moreover,
 writing
 \[  v^{\#} = ( - Q(v) , \beta(v)) \in E \oplus C =J \]
 for $Q(v) \in E$ and $\beta(v) \in V$, we obtain a quadratic form $Q$ on $C$ and a quadratic map $\beta$ on $C$. Then, by Theorem 38.6 in \cite{KMRT},
  the triple $(C,Q,\beta)$ is an $E$-twisted composition algebra over $F$. 
\vskip 5pt 

Conversely, given an $E$-twisted composition algebra $C$ over $F$, Theorem 38.6 in \cite{KMRT}  says that the space $E \oplus C$ can be 
given the structure of a Freuthendal-Jordan algebra over $F$. 
In particular,  we have described the bijective correspondence between the objects in (b) and (c) of the main theorem:
\[  \{\text{$E$-twisted composition algebras over $F$} \} \]
\[  \updownarrow \]
\[   \{ \text{$i: E \longrightarrow J$ with $J$ Freudenthal-Jordan of dimension $9$} \}. \]

\vskip 5pt
\noindent It is also clear that under this identification, one has
\[  \Aut_F( i: E \rightarrow J)    = \Aut_F ( i(E)^{\perp}). \]

\vskip 5pt
\subsection{\bf  Example.} \label{SS:example2} 
Let $K$ be an \'etale quadratic $F$-algebra and consider the Jordan algebra $J_3(K)$ of $3\times 3$ Hermitian matrices with entries in $K$.
Let $E=F\times F\times F$ be the subalgebra of $J_3(K)$ consisting of diagonal matrices.  Then $C$ consists of matrices 
\[  v=\left( \begin{array}{ccc}
0 & \bar z_3 & z_2 \\
z_3 & 0 & \bar z_1 \\
 \bar z_2 & z_1 & 0 \end{array}  \right).  \]
 Thus $C =K\times K \times K$, and one cheeks that 
 \[ 
Q(z_1,z_2,z_3)=(z_1\bar z_1, z_2\bar z_2,  z_3\bar z_3) 
\] 
and 
\[ 
\beta(z_1,z_2,z_3)=(\bar z_2\bar z_3, \bar z_3\bar z_1, \bar z_1\bar z_2). 
\] 
\vskip 5pt

\noindent The algebra $C$ is  the distinguished point in the fiber of $([F^3], [K])$, in the sense of Proposition \ref{P:kmrt-quad}. 
The automorphism group of $C$ is given by
\[  \Aut_F(C, Q,\beta) = (K^1 \times K^1 \times K^1)^0 \rtimes (\Z/2\Z \times S_3) \]
where $K^1$ denotes the torus of norm 1 elements in $K$ and $(K^1 \times K^1 \times K^1)^0$ denotes the subgroup of triples $(t_1, t_2, t_3)$ such that $t_1t_2t_3  =1$.   
  \vskip 5pt

\subsection{\bf The quadratic algebra associated to $i:E \rightarrow J$.} 
If an $E$-twisted composition algebra $C$ corresponds to a conjugacy class of  embeddings $i: E \longrightarrow J$, then we may ask how the quadratic algebra $K_C$ associated to $C$  can be described 
in terms of $i: E  \longrightarrow J$.  In this case, $C = E^{\perp}$ is an $E$-twisted composition algebra, and so $C = E \otimes K_C$ for a quadratic algebra $K_C$ as in  Proposition \ref{P:kmrt-quad}. On the other hand, we know that $J$ is associated to a pair $(B_{K_J}, \tau)$, where $B_{K_J}$ is a central simple algebra over an \'etale quadratic $F$-algebra $K_J$ and $\tau$ is an involution of the second kind.
 Now, Examples (5) and (6) on page 527 in \cite{KMRT} show that 
 \[  [K_C] \cdot [K_E] \cdot [K_J] = 1 \in H^1(F, \Z/2\Z) = F^{\times}/ F^{\times 2}. \] 

 \vskip 10pt

\section{\bf Quasi-split Groups of type $D_4$} 

In this section, we shall introduce the $E$-twisted Bhargava's cube by way of the quasi-split groups
 of type $D_4$.
 \vskip 5pt

\subsection{\bf Root system.}
Let $\Psi$ be a root system of type $D_4$, and $\Pi=\{\alpha_0, \alpha_1,\alpha_2,\alpha_3\}$ be a set of simple roots such that the 
corresponding Dynkin diagram is 

\begin{picture}(100,130)(-180,0) 

\put(19,18){3} 

\put(22,30){\line(0,1){40}}

\put(19,74){0}

\put(30,80){\line(2,1){34}}
\put(-20,97){\line(2,-1){34}}

\put(-30,96){2}
\put(70,96){1}

\end{picture}

\noindent The group of diagram automorphisms $\Aut(\Pi)$ is identified with $S_3$, the group of permutations of $\{1,2,3\}$. We denote the highest root by $\beta_0 = \alpha_1+ \alpha_2 +\alpha_3 + 2 \alpha_0$.
\vskip 5pt

\subsection{\bf Quasi-split groups of type $D_4$.} 
Let $G$ be a split, simply connected Chevalley group of type $D_4$. We fix a maximal torus $T$ contained in a Borel subgroup $B$ defined over $F$. The group $G$ is then generated by root groups $U_{\alpha}\cong \mathbb G_a$, where $\alpha\in\Psi$. Steinberg showed that one can pick the isomorphisms 
$x_{\alpha}: \mathbb G_a \rightarrow U_{\alpha}$ such that 
\[ 
[x_{\alpha}(u), x_{\alpha'}(u')]= x_{\alpha+\alpha'}(\pm uu')
\] 
whenever $\alpha+\alpha'$ is a root.  Fixing such a system of isomorphisms is fixing an \'epinglage (or pinning) for $G$. As Kac noted, a choice of signs corresponds to 
an orientation of the Dynkin diagram. Since one can pick an orientation of the Dynkin diagram which is invariant under  $\Aut(\Pi)$,  the group of automorphisms of 
 $\Pi$ can be lifted to a group of automorphisms of  $G$. 
Thus, we have a semi-direct product
\[  \tilde{G} = G \rtimes \Aut(\Pi) = G\rtimes S_3, \] 
where the action of $S_3$ permutes the root subgroups $U_{\alpha}$ and the isomorphisms $x_{\alpha}$. 
\vskip 5pt

Since the outer automorphism group $S_3$ of $G$ is also the automorphism group of the split \'stale cubic $F$-algebra $F^3$, we see that
every cubic \'etale algebra $E$ defines a simply-connected quasi-split form $G_E$ of $G$, whose outer automorphism group is the finite group scheme $S_E$. Thus, 
\[ \tilde G_E = G_E \rtimes S_E \]
is a form of $\tilde{G}$, and it comes equipped with
a pair $B_E \supset T_E$ consisting of a Borel subgroup $B_E$  containing a maximal torus $T_E$, both defined over $F$, as well as a Chevalley-Steinberg system of \'{e}pinglage relative to this pair.

 \vskip 5pt

\subsection{\bf $G_2$ root system.}

The subgroup of $G_E$ fixed pointwise by $S_E$ 
is isomorphic to the split exceptional group of type $G_2$. 
\vskip 10pt

Observe that $B=G_2 \cap B_E$ is a Borel subgroup of $G_2$ and $T = T_E \cap G_2$ is a maximal split torus of $G_2$.   Via the adjoint action of $T$ on $G_E$, we obtain the root system $\Psi_{G_2}$ of $G_2$, so that
\[  \Psi_{G_2} = \Psi|_T. \]
 We denote the short simple root of this $G_2$ root system by $\alpha$ and the long simple root by $\beta$.  Then
\[  \beta = \alpha_0|_T \quad \text{and} \quad \alpha = \alpha_1|_T = \alpha_2|_T = \alpha_3|_T. \]
Thus, the short root spaces have dimension $3$, whereas the long root spaces have dimension $1$. 
For each root $\gamma \in \Psi_{G_2}$, the associated root subgroup $U_{\gamma}$ is defined over $F$ and the Chevalley-Steinberg system of \'{e}pinglage gives isomorphisms:
\[  U_{\gamma} \cong \begin{cases}
\Res_{E/F}\mathbb{G}_a, \text{   if $\gamma$ is short;} \\
\mathbb{G}_a, \text{  if $\gamma$ is long.} \end{cases} \]
 \vskip 10pt

\subsection{\bf The parabolic subgroup $P_E$.}   \label{SS:Heis}

The $G_2$ root system gives rise to 2 parabolic subgroups of $G_E$. One of these is a maximal parabolic $P_E = M_E N_E$ known as the Heisenberg parabolic. Its unipotent radical $N_E$ is a Heisenberg group with center $Z_E = U_{\beta_0}$, see Section 2 in \cite{GGS}. Moreover,
\[  N_E/Z_E  = U_{\beta} \times U_{\beta + \alpha} \times U_{\beta+2\alpha} \times U_{\beta+3\alpha}  
 \cong \mathbb G_a \times \Res_{E/F}\mathbb{G}_a\times \Res_{E/F}\mathbb{G}_a \times \mathbb{G}_a \]
 and 
\[  \tilde{M}_E = M_E \rtimes S_E \cong\GL_2(E)^0  \rtimes S_E  \]
where  
\[ 
\GL_2(E)^0 = \{ g \in \GL_2(E): \det(g) \in F^{\times} \}.
\]

We shall fix the isomorphism $M_E \rtimes S_E \cong\GL_2(E)^0  \rtimes S_E$ as follows.  We first consider the case when $E = F^3$ is split.  
The pinning gives us an identification 
\[ 
M_{\der} (F) \cong \SL_2(F)^3
\] 
such that 
\[  \alpha_1^{\vee}(t)  = \left(  \left( \begin{array}{cc}
t & \\
& t^{-1} \end{array} \right),  1, 1 \right)  \in  \SL_2(F)^3, \]
while  $\alpha_2^{\vee}(t)$ and $\alpha_3^{\vee}(t)$ are defined analogously by cyclically permuting the entries of $\alpha_1^{\vee}(t)$. 
We extend this identification to $M(F)$  by 
\[ 
 \alpha_0^{\vee}(t)  = \left(  \left( \begin{array}{cc}
1 & \\
& t \end{array} \right)  , 
 \left( \begin{array}{cc}
1 & \\
& t \end{array} \right),
\left( \begin{array}{cc}
1 & \\
& t \end{array} \right) \right) \in (\GL_2(F)^3)^0. 
 \]
 Note that, under the identification, 
\[  \beta_0^{\vee}(t)  =  
 \left(  \left( \begin{array}{cc}
t & \\
& t \end{array} \right)  , 
 \left( \begin{array}{cc}
t & \\
& t \end{array} \right),
\left( \begin{array}{cc}
t & \\
& t \end{array} \right) \right)
 \in  (\GL_2(F)^3)^0. \]
 Finally, since the pinning is invariant under the action of $\Aut(\Pi)\cong S_3$, it follows that 
 \[ 
 \tilde{M}(F)\cong( \GL_2(F)^3)^0 \rtimes S_3 
 \] 
 where $S_3$ acts on $( \GL_2(F)^3)^0$ by permuting the components. 
  For a general $E$, one obtains the desired isomorphism by a Galois descent argument.

 \vskip 10pt

 \section{\bf Bhargava's Cube}  \label{S:cube}
 In this section, we shall examine the split case, where the pinning for $G$ gives a $\mathbb Z$-structure on $N/Z$, for more details see Section 4 in \cite{GGS}. 
 
 \vskip 5pt
 
 \subsection{\bf Bhargava's cube.}  \label{SS:cube}  Let $V_2$ be the standard representation of $\SL_2$. 
 Recall that we have identified  $M_{\der}$  with $\SL_2^3$ and $M$ with $(\GL_2^3)^0$. Under this identification, the representation of $M_{\der}$  
 on $N/Z$ is isomorphic to the representation of $\SL_2^3$ on $V=V_2 \otimes V_2 \otimes V_2$.  Since  $\beta_0^{\vee}(t)$ acts on $N/Z$ as multiplication by 
 $t$,  it follows that $( \GL_2^3)^0$ acts on $V$ by the standard action \underline{twisted} by $\det^{-1}$. The group $S_3\cong \Aut(\Pi)$ acts on 
 $V_2 \otimes V_2 \otimes V_2$ by permuting the three factors.  
 
 \vskip 5pt

Since $V$ is an absolutely irreducible $\SL_2^3$-module, the isomorphism of $N/Z$ and $V$ is unique up to a non-zero scalar. Since $\beta_0^{\vee}(t)$ acts on $N/Z$ as multiplication by  $t$, the bijection between $M$-orbits on $N/Z$ and $M$-orbits on $V$ does not depend on the choice on the isomorphism.  If we demand that the isomorphism preserves $\mathbb Z$-structures, i.e. it  gives an isomorphism of $(N/Z)(\mathbb Z)$ and $\mathbb Z^2\otimes \mathbb Z^2\otimes\mathbb Z^2$, then it is unique up to a sign.

 \vskip 5pt 
 An element  $v\in V(F)$ is represented by a cube
 
 \begin{picture}(100,130)(-130,0) 

\put(18,16){$e_3$} 
\put(25,20){\line(1,0){50}}
\put(75,16){$f_1$}
\put(22,26){\line(0,1){46}}
\put(78,26){\line(0,1){46}}

\put(38,48){$f_2$} 
\put(45,50){\line(1,0){50}}
\put(96,48){$b$}
\put(42,56){\line(0,1){46}}
\put(98,56){\line(0,1){46}}

\put(25,25){\line(2,3){14}}
\put(82,25){\line(2,3){14}}

\put(20,74){$a$}
\put(25,76){\line(1,0){50}}
\put(76,74){$e_2$}

\put(38,106){$e_1$}
\put(45,106){\line(1,0){50}}
\put(96,106){$f_3$}

\put(25,81){\line(2,3){14}}
\put(82,81){\line(2,3){14}}

\end{picture}

\noindent 
where $a, \ldots  , b\in F$, and the vertices correspond to the standard basis in $F^2 \otimes F^2 \otimes F^2$. 
More precisely, we fix this correspondence so that 
\[ 
\left(\begin{array}{c} 1 \\  0 \end{array}\right) \otimes \left(\begin{array}{c} 1 \\  0 \end{array}\right)\otimes 
 \left(\begin{array}{c} 1 \\  0 \end{array}\right) \text{ and } \left(\begin{array}{c} 0 \\  1 \end{array}\right)\otimes  
 \left(\begin{array}{c} 0 \\  1 \end{array}\right) \otimes \left(\begin{array}{c} 0 \\  1 \end{array}\right) 
 \] 
correspond to the vertices marked with letters $a$ and $b$, respectively.  We note that elementary matrices in 
 $\SL_2(F)^3$ act on the space of cubes by the following three types of ``row-column" operations on cubes: 
\begin{itemize}
\item add or subtract the front face from the rear face of the cube, and vice-versa. 
\item add or subtract the top face from the bottom face of the cube, and vice-versa.
\item add or subtract the right face from the left face of the cube, and vice-versa. 
\end{itemize} 

\noindent 
The group $S_3\cong \Aut(\Pi)$ acts as the group of symmetries of the cube fixing these two vertices. We shall often write the cube as 
a quadruple 
\[ 
(a,e,f,b)
\] 
where $e=(e_1,e_2,e_3)$ and $f=(f_1,f_2,f_3)\in F^3$.  
 \vskip 5pt

\subsection{\bf Reduced and distinguished cube.}
It is not hard to see that, using the action of $M(F)$,  every cube can be 
transformed into a cube of the form $(1,0, f, b)$:

 \begin{picture}(100,130)(-130,0) 

\put(18,16){$0$} 
\put(25,20){\line(1,0){50}}
\put(75,16){$f_1$}
\put(22,26){\line(0,1){46}}
\put(78,26){\line(0,1){46}}

\put(38,48){$f_2$} 
\put(45,50){\line(1,0){50}}
\put(96,48){$b$}
\put(42,56){\line(0,1){46}}
\put(98,56){\line(0,1){46}}

\put(25,25){\line(2,3){14}}
\put(82,25){\line(2,3){14}}

\put(20,74){$1$}
\put(25,76){\line(1,0){50}}
\put(76,74){$0$}

\put(38,106){$0$}
\put(45,106){\line(1,0){50}}
\put(96,106){$f_3$}

\put(25,81){\line(2,3){14}}
\put(82,81){\line(2,3){14}}

\end{picture}

\vskip 5pt

\noindent We shall call such a cube a {\em reduced cube}. In particular, we call the cube 
$v_0 = (1,0,0,-1)$ the {\em distinguished cube}.

\vskip 10pt

 \subsection{\bf Stabilizer of distinguished cube.}
 Let $\Stab_{M}(v_{0})$ and $\Stab_{\tilde M}(v_{0})$ be the respective stabilizers in $M$  and $\tilde M$ of the distinguished cube $v_0 \in V$.   
Since $\Aut(\Pi)$ stabilizes $v_0$,  the group $\Stab_{\tilde M}(v_{0})$  is a semi direct product of $\Stab_{M}(v_{0})$ and $\Aut(\Pi)$. 
We shall now compute $\Stab_{M}(v_{0})$. Let $g=(g_1,g_2,g_3)\in M(F)$ where 
\[ 
g_i= \left(\begin{array}{cc} a_i & b_i \\ c_i & d_i \end{array}\right). 
\] 
Since 
\[ 
v_0= \left(\begin{array}{c} 1 \\  0 \end{array}\right) \otimes \left(\begin{array}{c} 1 \\  0 \end{array}\right)\otimes 
 \left(\begin{array}{c} 1 \\  0 \end{array}\right) -  \left(\begin{array}{c} 0 \\  1 \end{array}\right)\otimes  
 \left(\begin{array}{c} 0 \\  1 \end{array}\right) \otimes \left(\begin{array}{c} 0 \\  1 \end{array}\right)
 \] 
 and 
 \[ 
g\cdot v_0= \det(g)^{-1} \cdot \left(\begin{array}{c} a_1 \\  c_1 \end{array}\right) \otimes \left(\begin{array}{c} a_2 \\  c_2 \end{array}\right)\otimes 
 \left(\begin{array}{c} a_3 \\  c_3 \end{array}\right) -  \det(g)^{-1} \cdot \left(\begin{array}{c} b_1 \\  d_1 \end{array}\right)\otimes  
 \left(\begin{array}{c} b_2 \\  d_2 \end{array}\right) \otimes \left(\begin{array}{c} b_3 \\  d_3 \end{array}\right)
 \] 
 $g\cdot v_0=v_0$  if and only if eight equations hold. Six of these equations are homogeneous. They are: 
 \[ 
 a_1 c_2 a_3 =  b_1 d_2 b_3
 \] 
 \[ 
 a_1 c_2 c_3 = b_1 d_2 d_3
 \] 
 with the additional four obtained by cyclically permuting the indices. 
If we multiply the first equation by $d_3$, the second by $b_3$, and subtract them, then 
\[ 
0= a_1 c_2 a_3 d_3 - a_1 c_2 c_3 b_3= a_1 c_2 (a_3 d_3 - c_3 b_3). 
\] 
Since $a_3 d_3 - c_3 b_3\neq 0$, we have $a_1 c_2=0$. A similar manipulation of these two equations gives $b_1 d_2=0$. 
By permuting the indices,  we have $a_i c_j=b_id_j=0$ for all $i\neq j$. This implies that all $g_i$ are simultaneously diagonal or off diagonal. 
Now it is easy to see that the remaining two equations imply that  $\Stab_M(v_0)$ has two connected components, and the identity component consists of $g=(g_1, g_2, g_3)$ such 
that $g_i$ are diagonal matrices,  $a_i d_i=1$, and $a_1 a_2 a_3=1$.  The other component of $\Stab_M(v_0)$   
contains an element $w=(w_1,w_2,w_3)$ of order 2, where 
\[ 
w_i= \left(\begin{array}{cc} 0 & 1 \\ 1 & 0 \end{array}\right). 
\] 
 We now have a complete description of $\Stab_M(v_0)$ (and of $\Stab_{\tilde M}(v_0)$): 
\[  \Stab_M(v_0) \cong \{ (a_1, a_2, a_3) \in \mathbb{G}_m^3: a_1a_2a_3 =1\} \rtimes \Z/2\Z \cong \mathbb{G}_m^2 \rtimes \Z/2\Z. \]
In particular, we have shown: 
 \vskip 5pt

 \begin{prop}  \label{P:stab-1}
 The stabilizer $\Stab_{\tilde M}(v_0)$ in $\tilde M$ of the distinguished cube $v_0= (1,0,0,-1)$ is 
  isomorphic to the  group of $F$-automorphisms of the  split twisted composition algebra $C_0$.
Indeed, they give identical subgroups of $(\GL_2(F)^3)^0 \rtimes S_3$ where we fix the isomorphism $M(F) \cong (\GL_2(F)^3)^0$ as above.  
 \end{prop}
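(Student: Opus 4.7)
The plan is to combine the stabiliser computation just carried out with the explicit description of $\Aut_F(C_0, Q_0, \beta_0)$ given in \S\ref{SS:example}, and to read off directly that the two subgroups of $(\GL_2(F)^3)^0 \rtimes S_3$ coincide. First, observe that $v_0 = e_1^{\otimes 3} - e_2^{\otimes 3}$ is manifestly invariant under the permutation action of $S_3 \cong \Aut(\Pi)$ on the three tensor factors, so $\Stab_{\tilde M}(v_0) = \Stab_M(v_0) \rtimes S_3$. It therefore suffices to match $\Stab_M(v_0)$ with $\Aut_E(C_0,Q_0,\beta_0) \rtimes \Z/2\Z = E^1 \rtimes \Z/2\Z$ (where $E=F^3$), while verifying at the end that the two copies of $S_3$ give the same subgroup.

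Next I would make the action of $\Aut_F(C_0,Q_0,\beta_0)$ concrete using the $E$-basis $\{(1,0),(0,1)\}$ of $C_0 = E \oplus E$. Since any $e \in E^1$ satisfies $e^{\#} = e^{-1}$, the automorphism $i_e \colon (x,y) \mapsto (ex, e^{-1}y)$ is represented by $\operatorname{diag}(e, e^{-1}) \in \GL_2(E)$, while the involution $w \colon (x,y) \mapsto (y,x)$ is the antidiagonal matrix. Writing $e = (a_1,a_2,a_3)$ with $a_1 a_2 a_3 = 1$ and identifying $\GL_2(E) = \GL_2(F)^3$ componentwise, these become, respectively, the triple $(\operatorname{diag}(a_i, a_i^{-1}))_{i=1,2,3}$ and the triple $(w_i)_{i=1,2,3}$ with $w_i = \left(\begin{smallmatrix} 0 & 1 \\ 1 & 0 \end{smallmatrix}\right)$. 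Comparison with the computation of $\Stab_M(v_0)$ above, together with the formulas for $\alpha_i^{\vee}(t)$ in \S\ref{SS:Heis}, shows that this is precisely the subgroup $\Stab_M(v_0) \subset (\GL_2(F)^3)^0$, matched component by component.

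Finally, the $S_3$-action on $\Aut_F(C_0,Q_0,\beta_0)$ coming from $S_E = \Aut_F(F^3)$ permutes the three factors of $E$, hence the three factors of $\GL_2(F)^3$; by the $\Aut(\Pi)$-invariance of the pinning, this is exactly the action of $\Aut(\Pi) \cong S_3$ used to form $\tilde M$. There is no genuine obstacle in this argument; it is essentially bookkeeping. The only subtlety worth double-checking is the compatibility between the $E$-basis chosen on $C_0$ and the pinning-induced identification $M(F) \cong (\GL_2(F)^3)^0$, which amounts to unwinding the conventions fixed in Section \ref{S:cube} and \S\ref{SS:Heis}; once this is spelled out, both constructions produce literally the same subgroup $\mathbb{G}_m^2 \rtimes (\Z/2\Z \times S_3)$ of $(\GL_2(F)^3)^0 \rtimes S_3$.
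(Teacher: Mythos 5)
Your proof is correct and takes essentially the same approach as the paper: the proposition there is stated as a summary of the preceding stabiliser computation, and you likewise rely on that computation and on the explicit description of $\Aut_F(C_0,Q_0,\beta_0)$ from \S\ref{SS:example}, matching the two subgroups of $(\GL_2(F)^3)^0 \rtimes S_3$ component by component. One small slip: you wrote ``$\Aut_E(C_0,Q_0,\beta_0) \rtimes \Z/2\Z = E^1 \rtimes \Z/2\Z$'' with a spurious extra $\rtimes \Z/2\Z$ on the left-hand side; $\Aut_E(C_0,Q_0,\beta_0)$ already equals $E^1 \rtimes \Z/2\Z$, so the identification you actually use is the intended one.
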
 

\vskip 10pt

  \subsection{\bf Three quadratic forms.}
One key observation in \cite{B1} is that one can slice the cube (given in the picture in \S \ref{SS:cube})  in three different ways, giving three pairs of matrices: 

$$ 
A_1 =\left(\begin{array}{cc} a & e_2 \\ e_3 & f_1 \end{array}\right) \hskip 5pt 
B_1 =\left(\begin{array}{cc} e_1 & f_3 \\ f_2 & b \end{array}\right)  
$$ 
$$ 
A_2 =\left(\begin{array}{cc} a & e_3 \\ e_1 & f_2 \end{array}\right) \hskip 5pt 
B_2 =\left(\begin{array}{cc} e_2 & f_1 \\ f_3 & b \end{array}\right)  
$$ 
$$ 
A_3 =\left(\begin{array}{cc} a & e_1 \\ e_2 & f_3 \end{array}\right) \hskip 5pt 
B_3 =\left(\begin{array}{cc} e_3 & f_2 \\ f_1 & b \end{array}\right)  
$$ 
Note that the pairs $(A_{2},B_{2})$ and $(A_{3},B_{3})$ are obtained by rotating the 
pair $(A_{1},B_{1})$  about the axis passing through $a$ and $b$. 
For each pair  $(A_i, B_i)$, Bhargava defines a quadratic binary form by 
\[  Q_i=-\det(A_i x + B_i y). \] 
\smallskip 

\begin{prop} 
Given a cube $v$, the three forms  $Q_{1}$, $Q_{2}$ and $Q_{3}$ 
have the same discriminant $\Delta=\Delta(v)$. 
\end{prop}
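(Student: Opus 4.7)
The conclusion is a polynomial identity: writing $Q_i(x,y) = \alpha_i x^2 + \beta_i xy + \gamma_i y^2$, since the entries of $A_i$ and $B_i$ are linear in the eight coordinates $a, b, e_1, e_2, e_3, f_1, f_2, f_3$ of the cube $v$, each of $\alpha_i, \beta_i, \gamma_i$ is homogeneous of degree $2$ in those coordinates and $\operatorname{disc}(Q_i) = \beta_i^2 - 4\alpha_i\gamma_i$ is homogeneous of degree $4$. The claim $\operatorname{disc}(Q_1) = \operatorname{disc}(Q_2) = \operatorname{disc}(Q_3)$ is therefore an equality of degree-$4$ polynomials in eight variables.

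The most elementary route is brute-force expansion. From the explicit form of the three pairs $(A_i, B_i)$ one reads off $\alpha_i, \beta_i, \gamma_i$, squares and subtracts, and checks term-by-term that all three polynomials agree; the common value is the classical Cayley hyperdeterminant. This is mechanical and safe, and is the route I would actually write out in print.

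The conceptual alternative proceeds as follows. First, I would show each $\operatorname{disc}(Q_i)$ is invariant under $\SL_2^3$ acting on $V = V_2 \otimes V_2 \otimes V_2$. Indeed, a computation along the lines of \S\ref{SS:cube} shows that an element of the $i$-th copy of $\SL_2$ acts on $(A_i, B_i)$ so as to effect an $\SL_2$-change of variables on $Q_i$, and the discriminant of a binary quadratic form is $\SL_2$-invariant; while the other two copies act on $A_i$ and $B_i$ by left- and right-multiplication, which preserves $\det(A_i x + B_i y)$ because $\SL_2$-elements have determinant one. Second, the space of $\SL_2^3$-invariants of degree $4$ on $V$ is one-dimensional (spanned by the Cayley hyperdeterminant $I$), so there are scalars $c_i \in F$ with $\operatorname{disc}(Q_i) = c_i \cdot I$.

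Finally, to see $c_1 = c_2 = c_3$, I would invoke the diagram symmetry $S_3 \subset \Aut(\Pi)$ that permutes the three tensor slots of $V$. Under this action the $i$-th slicing of $\tau \cdot v$ is identified with the $\tau(i)$-th slicing of $v$, so $\operatorname{disc}(Q_i) \circ \tau^{-1} = \operatorname{disc}(Q_{\tau(i)})$; since $I$ is itself $S_3$-invariant (being, up to scalar, the unique $\SL_2^3$-invariant of degree $4$), comparing the two sides gives $c_{\tau(i)} = c_i$ for every $\tau \in S_3$, hence the three constants coincide. The only genuine obstacle in this cleaner route is pinning down the sign conventions so that $S_3$ really does permute the pairs $(A_i, B_i)$ in the expected way; the direct expansion avoids this bookkeeping entirely, which is why it is the more honest choice for a write-up.
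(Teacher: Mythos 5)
Your two routes are both valid, but neither is the one the paper takes, and each costs more than the paper's argument does. The paper first uses the $M(F)$-action to put the cube in reduced form $(1,0,f,b)$, after which the three slicings become the $2\times2$ pairs with one zero face, so that
\[
Q_i(x,y) = -f_i x^2 - b xy + f_i^{\#} y^2,
\]
and all three visibly have discriminant $b^2 + 4 f_1 f_2 f_3$ by inspection. This reduction collapses what would otherwise be a degree-$4$ identity in eight variables into a two-line computation. (Implicit in that reduction is exactly the $\SL_2^3$-invariance of each $\operatorname{disc}(Q_i)$ that you prove explicitly in your conceptual route, together with Zariski density of the orbits of reduced cubes.)

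Comparing the three arguments: your brute-force route avoids any appeal to the group action but pays for it with a sizeable expansion that is error-prone to typeset; the paper's route uses just enough of the group action (a reduction to a nice orbit representative) to make the computation trivial; your conceptual route uses the group action maximally but then leans on a genuinely substantive fact, namely that the degree-$4$ $\SL_2^3$-invariants on $V_2 \otimes V_2 \otimes V_2$ form a one-dimensional space spanned by the hyperdeterminant. That is true (the invariant ring of this prehomogeneous space is a polynomial ring on the hyperdeterminant, of degree $4$), but it needs a citation, and without one the conceptual route is not self-contained. You also correctly flag the sign bookkeeping for the $S_3$-equivariance of the slicings; that bookkeeping is real but mild, since transpositions at worst transpose the matrices $A_i, B_i$ or swap $x \leftrightarrow y$, both of which preserve the determinant. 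On balance, the paper's proof dominates both of yours: it is shorter than the brute-force route and needs no imported invariant theory. If you want a self-contained write-up, adopt the paper's reduction step and keep your observation about $\SL_2^3$-invariance as the (brief) justification for why it is legitimate to restrict to reduced cubes.
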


\begin{proof}  We may assume the cube  is reduced. Now an easy computation show that the three forms  are 
$$ 
\begin{cases} Q_1(x,y) = -f_1x^2-bxy+f_2 f_3y^2 \\
              Q_2(x,y) = -f_2 x^2 - bxy +f_3 f_1 y^2 \\
              Q_3(x,y) = -f_3x^2 -bxy +f_1 f_2y^2. 
\end{cases}
$$
These forms have the same discriminant $\Delta=b^2+4f_1f_2f_3$. 
\end{proof}

\vskip 10pt

\subsection{\bf Quartic invariant.}
To every cube $v \in V$, the discriminant $\Delta(v)$ described in the previous proposition is a homogeneous quartic polynomial in $v$, which is invariant under the action of $\SL_2(F)^3$. This describes the quartic invariant of the prehomogeneous vector space $V$. An explicit computation gives the following formula:
\begin{align}
\Delta=\, \,& a^2 b^2   -2ab (e_1 f_1 + e_2f_2 +  e_3 f_3) + e_1^2 f_1^2 + e_2^2 f_2^2 +  e_3^2 f_3^2
\notag \\
&+ 4a f_1 f_2 f_3 + 4 b e_1 e_2 e_3 - 2( e_1e_2f_1f_2 + e_2e_3f_2f_3 +  e_3e_1f_3f_1 ).
\notag \end{align}
If $v$ is reduced, then this simplifies to $\Delta(v)  = b^2 + 4 f_1f_2f_3$. It is easy to check that for $g \in M$, one has
\[  \Delta(g \cdot v)  = \det (g)^2 \cdot \Delta(v). \]
 Thus, we see that $\Delta$ gives a well-defined map
 \[ \Delta:  \{ \text{generic $\tilde{M}(F)$-orbits on $V(F)$}\} \longrightarrow F^{\times}/ F^{\times 2} =
 \{ \text{\'etale quadratic $F$-algebras} \}. \]
\vskip 10pt

 \section{\bf $E$-twisted Bhargava Cube}
 
 Now we can extend the discussion of the previous section to the case of general $E$, where $V_E = F \oplus E \oplus E \oplus F$ 
 and $\tilde{M}_E= \GL_2(E)^0 \rtimes S_E$, via a Galois descent using a cocycle in the class of
 \[  [E] \in H^1(F, \Aut(\Pi))  =H^1(F, S_3). \]
 A cube is a quadruple $v=(a,e,f,b)$, where $e,f\in E$. As in the split case, we shall call the cubes of the form $v = (1,0,f,b)$ reduced,  and the vector $v_{0,E} = (1,0,0,-1)$ the $E$-distinguished cube.
 \vskip 5pt
 
\subsection{\bf Quartic invariant.} 
By Galois descent, we see that  the basic polynomial invariant $\Delta_E$   is given by
 \[  \Delta_E(a,e,f,b) =  a^2b^2 -2ab Tr_{E/F}(ef)+ Tr_{E/F}(e^2f^2) + 4a N_{E/F}(f) + 4b N_{E/F}(e) -2 Tr_{E/F}(e^{\#} f^{\#}) \]
 If $v$ is reduced, then this simplifies to:  
  \[  \Delta_E(1,0,f,b) = b^2 + 4 \cdot N_{E/F}(f). \] 
 \vskip 5pt
 
 \subsection{\bf Group action.}  \label{SS:action} 
 It is useful to note the action of certain elements of $\GL_2(E)^0$ on $V_E$. Specifically,
 $\sigma \in S_E$ acts by $\sigma(a,e,f,b) = (a, \sigma(e), \sigma(f), b)$.  Moreover,
  the diagonal torus elements
 \[  t_{\alpha,\beta} = \left( \begin{array}{cc}
 \alpha & 0 \\
 0 & \beta \end{array} \right)  \quad \text{with $\alpha \beta \in F^{\times}$}\]
 acts by
 \[  (a,e,f,b) \mapsto (\alpha^{\#}\beta^{-1} a ,  \alpha^{\#} \alpha^{-1}  e, \beta^{\#} \beta^{-1} f, \beta^{\#} \alpha^{-1} b). \]
 It is easy to check that
 \[  \Delta_E( t_{\alpha, \beta} \cdot v)  = (\alpha\beta)^2 \cdot \Delta_E(v). \]
 Since the action of $\SL_2(E)$ and $S_E$ preserve $\Delta_E$, we see that 
 \[  \Delta_E( g \cdot v)  = (\det g)^2 \cdot \Delta_E(v) \]
 so that  $\Delta_E$ induces a map
 \[  \{ \text{$\tilde{M}_E$-orbits on $V_E$} \} \longrightarrow F^{\times}/ F^{\times 2} = \{ \text{\'etale quadratic algebras} \}. \]
 In addition, the standard Weyl group element 
 \[  w  =\left( \begin{array}{cc}
 0 & 1 \\
 1 & 0 \end{array} \right) \in \GL_2(E)^0 \]
 acts on
 \[  w :  (a,e,f,b) \mapsto (-b,-f,-e,-a). \]
 
 \vskip 5pt

 \vskip 5pt

\subsection{\bf Stabilizer of distinguished $E$-cube.}  \label{SS:stab-E}
We can readily determine the stabiliser of the $E$-distinguished cube . Namely, under the action described in \S \ref{SS:action},  it is easy to see that the subgroup
\[  E^1  = \{  \left( \begin{array}{cc}
\alpha & \\
  & \alpha^{-1} \end{array}  \right): \alpha \in E^1 \} \subset \SL_2(E) \]
  fixes the $E$-distinguished cube $v_{0,E}$. So does the Weyl group element $w$. Thus we see that 
  \[  \Stab_{M_E}(v_{0,E}) \cong E^1 \rtimes \Z/2\Z \quad \text{and} \quad \Stab_{\tilde{M}_E}(v_{0,E} ) =  
  E^1 \rtimes (\Z/2\Z \times S_E). \] 
In particular, we have shown:
\vskip 5pt

 \begin{prop} The stabilizer $\Stab_{\tilde{M}_E}(v_{0,E} )$ in $\tilde M_E$ of the $E$-distinguished cube $(1,0,0,-1)$ is 
  isomorphic to the  group of $F$-automorphisms of the twisted composition algebra $C_E$ introduced in \S \ref{SS:example}. Indeed, they are identical as subgroups of $\GL_2(E)^0 \rtimes S_E(F)$ under the fixed isomorphism $M_E(F) \cong \GL_2(E)^0$. 
 \end{prop}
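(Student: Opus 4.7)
The plan is to combine a direct verification of one inclusion with a Galois descent argument for the other, using the preceding proposition on the split case as the starting input.

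First, I would verify directly that $E^1 \rtimes (\Z/2\Z \times S_E)$ is contained in $\Stab_{\tilde M_E}(v_{0,E})$. Using the formulas of \S\ref{SS:action}, for $\alpha \in E^1$ (so that $N_E(\alpha) = \alpha \alpha^{\#} = 1$ and hence $\alpha^{\#} = \alpha^{-1}$), the diagonal element $t_{\alpha,\alpha^{-1}} \in \SL_2(E)$ sends
\[
(1,0,0,-1) \mapsto ( \alpha^{\#} \alpha \cdot 1,\; 0,\; 0,\; (\alpha^{-1})^{\#} \alpha^{-1} \cdot (-1) ) = (1,0,0,-1),
\]
using $\alpha^{\#} \alpha = N_E(\alpha) = 1$ and $(\alpha^{-1})^{\#} \alpha^{-1} = N_E(\alpha^{-1}) = 1$. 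The Weyl element $w$ sends $(1,0,0,-1) \mapsto (-(-1), -0, -0, -1) = (1,0,0,-1)$. Finally, $S_E(F)$ acts trivially on the zero middle components and fixes the outer entries which lie in $F$. So the three groups commute with fixing $v_{0,E}$, and the containment follows.

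Second, for the reverse inclusion, I would invoke Galois descent. The group scheme $\tilde M_E$ is by construction the twist of $\tilde M$ by the cocycle $[E] \in H^1(F, \Aut(\Pi)) = H^1(F,S_3)$, and under the identification $V_E \otimes_F \bar F \cong V \otimes_F \bar F$ the $E$-distinguished cube $v_{0,E} = (1,0,0,-1)$ corresponds to the split distinguished cube $v_0$, which is itself $S_3$-invariant. Hence
\[
\Stab_{\tilde M_E}(v_{0,E}) = {}^{[E]}\!\bigl( \Stab_{\tilde M}(v_0) \bigr)
\]
as group schemes over $F$. By the preceding proposition on the split case, the right-hand side is the twist of
\[
\bigl\{ (a_1,a_2,a_3) \in \mathbb{G}_m^3 : a_1 a_2 a_3 = 1 \bigr\} \rtimes \bigl(\Z/2\Z \times S_3\bigr).
\]
The action of $\Aut(\Pi) = S_3$ permutes the three factors of $\mathbb{G}_m^3$ (as it permutes $\alpha_1^{\vee}, \alpha_2^{\vee}, \alpha_3^{\vee}$), acts trivially on the Weyl element $w = (w_1,w_2,w_3)$ (which is itself symmetric), and acts on the outer $S_3$ by conjugation (i.e.\ by the standard outer-form twisting). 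Twisting the norm-one subtorus of $\Res_{F^3/F}\mathbb{G}_m$ by $[E]$ yields precisely the norm-one torus $E^1 \subset \Res_{E/F}\mathbb{G}_m$, twisting $\Z/2\Z$ yields $\Z/2\Z$, and twisting $S_3$ yields $S_E$. Altogether,
\[
\Stab_{\tilde M_E}(v_{0,E}) = E^1 \rtimes (\Z/2\Z \times S_E),
\]
matching the containment of the first step on the nose.

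Third, I would match this subgroup of $\GL_2(E)^0 \rtimes S_E(F)$ with the automorphism group of $C_E$ computed in \S\ref{SS:example}. Under the correspondence $\text{diag}(\alpha, \alpha^{-1}) \leftrightarrow i_{\alpha}$ (both send the two $E$-factors to $\alpha$ times the first and $\alpha^{\#} = \alpha^{-1}$ times the second), $w \leftrightarrow$ the swap $(x,y) \mapsto (y,x)$, and $S_E \leftrightarrow S_E$, the two descriptions coincide as subgroups of $\GL_2(E)^0 \rtimes S_E(F)$, giving the claimed identification.

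The main obstacle is the Galois descent step: one must verify carefully that the $S_3$-action on $\Stab_{\tilde M}(v_0)$ arising from the pinning (and from the realization of $S_3$ inside $\tilde M$) is indeed the evident permutation action on the three $\mathbb{G}_m$ factors of the identity component. This is where the fact that $\Aut(\Pi)$ permutes the coroots $\alpha_1^{\vee}, \alpha_2^{\vee}, \alpha_3^{\vee}$ while fixing $\alpha_0^{\vee}$ is essential; once granted, the norm-one torus in $\Res_{F^3/F}\mathbb{G}_m$ twists to the norm-one torus $E^1$ with no surprises, and the rest is bookkeeping.
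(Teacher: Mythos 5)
Your proof is correct and takes essentially the same approach as the paper: a direct check via the action formulas of \S\ref{SS:action} that $E^1$, $w$, and $S_E$ fix $v_{0,E}$, combined with Galois descent from the split case (Proposition \ref{P:stab-1}) to see that these exhaust the stabilizer and to match the resulting subgroup of $\GL_2(E)^0 \rtimes S_E(F)$ with $\Aut_F(C_E,Q,\beta)$. The paper's treatment in \S\ref{SS:stab-E} leaves the descent step implicit (``Thus we see\ldots''), whereas you spell it out.
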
 

\vskip 10pt

\section{\bf Generic Orbits}
We come now to the main result of this paper: the determination of the generic $\tilde{M}_E(F)$-orbits in $V_E(F)$. 
\vskip 5pt

\subsection{\bf A commutative diagram.}
We have the following commutative diagram 
\begin{equation} \label{E:comm}
\xymatrix{ H^1(F, \Stab_{\tilde{M}}(v_{0} )) \ar[r] \ar[d] & H^1(F, \tilde{M}) \ar[d] \\ 
H^1(F, \Aut_F(C_0,Q_0,\beta_0)) \ar[r] & H^1(F, S_3)}
\end{equation}

\vskip 5pt

We make several observations about this commutative diagram.
\vskip 5pt

\begin{lemma}
(i) The   first vertical arrow is bijective.
\vskip 5pt

\noindent (ii) The second vertical arrow is bijective.

\vskip 5pt
\noindent (iii) The horizontal arrows are surjective.
\end{lemma}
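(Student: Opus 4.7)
The plan is as follows.

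For part (i), the key input is Proposition \ref{P:stab-1}, which shows that $\Stab_{\tilde{M}}(v_0)$ and $\Aut_F(C_0, Q_0, \beta_0)$ coincide as subgroups of $(\GL_2^3)^0 \rtimes S_3$ under the fixed isomorphism $M(F) \cong (\GL_2(F)^3)^0$. This identification is Galois-equivariant, so it induces an isomorphism of $F$-group schemes, and hence a bijection on $H^1$. Thus the first vertical arrow is essentially an identity map.

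For part (ii), I would exploit the short exact sequence $1 \to M \to \tilde{M} \to S_3 \to 1$ and the splitting $S_3 \hookrightarrow \tilde{M}$, which immediately gives a section of $H^1(F,\tilde{M}) \to H^1(F,S_3)$ and hence surjectivity. For injectivity, the standard twisting formalism for semidirect products shows that the fiber over a class $[\rho_E] \in H^1(F, S_3)$ is a quotient of $H^1(F, M_E)$ by an action of $S_E(F)$, where $M_E = (\Res_{E/F}\GL_2)^0$ is the twist of $M$ by $\rho_E$. So it suffices to show $H^1(F, M_E) = 1$ for every \'etale cubic algebra $E$. This is the main technical step. I would use the short exact sequence
\[
1 \longrightarrow M_E \longrightarrow \Res_{E/F}\GL_2 \xrightarrow{\ \det\ } \Res_{E/F}\mathbb{G}_m/\mathbb{G}_m \longrightarrow 1
\]
and the long exact cohomology sequence. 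By Shapiro's lemma and Hilbert 90, $H^1(F, \Res_{E/F}\GL_2) = H^1(E, \GL_2) = 1$. On $F$-points, $(\Res_{E/F}\mathbb{G}_m/\mathbb{G}_m)(F) = E^\times/F^\times$ (using $H^1(F,\mathbb{G}_m)=1$), and the map $\GL_2(E) \to E^\times/F^\times$ is evidently surjective. The connecting homomorphism then forces $H^1(F, M_E) = 1$, giving injectivity and hence bijectivity.

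For part (iii), the bottom horizontal arrow is the map (\ref{E:ec}), whose surjectivity is already established in the text by the splitting (\ref{E:split}). For the top horizontal arrow $H^1(F, \Stab_{\tilde{M}}(v_0)) \to H^1(F, \tilde{M})$, I would argue as follows: the inclusion $S_3 = \Aut(\Pi) \hookrightarrow \Stab_{\tilde{M}}(v_0) \hookrightarrow \tilde{M}$ composes with $\tilde{M} \to S_3$ to the identity on $S_3$. Thus the composition
\[
H^1(F, S_3) \longrightarrow H^1(F, \Stab_{\tilde{M}}(v_0)) \longrightarrow H^1(F, \tilde{M}) \longrightarrow H^1(F, S_3)
\]
is the identity. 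Since the rightmost map is a bijection by (ii), the composition $H^1(F, S_3) \to H^1(F, \tilde{M})$ is also a bijection, which forces $H^1(F, \Stab_{\tilde{M}}(v_0)) \to H^1(F, \tilde{M})$ to be surjective.

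The main obstacle is the computation in (ii) that $H^1(F, M_E) = 1$; everything else is formal manipulation with exact sequences and splittings, while this vanishing is what pins down the bijectivity of the second vertical arrow and underlies the whole commutative diagram.
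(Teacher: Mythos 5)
Your proposal is correct and follows essentially the same overall strategy as the paper: cite Proposition \ref{P:stab-1} for (i), reduce (ii) to the vanishing $H^1(F,M_E)=1$ via the twisting argument, and deduce (iii) from the splitting $\Aut(\Pi) \hookrightarrow \Stab_{\tilde M}(v_0)$ together with the bijectivity in (ii). The only genuinely different step is your proof of $H^1(F,M_E)=1$: you exhibit $M_E$ as the kernel of the surjection
\[
\Res_{E/F}\GL_2 \longrightarrow (\Res_{E/F}\mathbb{G}_m)/\mathbb{G}_m,
\]
and conclude by $H^1(F,\Res_{E/F}\GL_2)=H^1(E,\GL_2)=1$ (Shapiro plus Hilbert 90) together with surjectivity of $\GL_2(E) \to E^\times/F^\times$. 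The paper instead writes $M_E$ as an extension
\[
1 \longrightarrow \Res_{E/F}\SL_2 \longrightarrow M_E \xrightarrow{\ \det\ } \mathbb{G}_m \longrightarrow 1,
\]
and uses $H^1(F,\Res_{E/F}\SL_2)=H^1(E,\SL_2)=1$ and $H^1(F,\mathbb{G}_m)=1$. Both are correct, use the same ingredients (Shapiro, Hilbert 90, vanishing of $H^1$ for $\GL_2$ or $\SL_2$ over a field), and are of comparable length; your version embeds $M_E$ into a larger group with known cohomology, the paper's version fibers $M_E$ over $\mathbb{G}_m$ — a matter of taste. Your derivation of surjectivity of the top horizontal arrow from the retraction $H^1(F,S_3) \to H^1(F,\Stab_{\tilde M}(v_0)) \to H^1(F,\tilde M) \to H^1(F,S_3)$ being the identity, combined with bijectivity of the second vertical arrow, is exactly the intended reasoning.
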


 \begin{proof} 
\noindent (i) This follows by Proposition \ref{P:stab-1}.
\vskip 5pt

\noindent (ii) Let the second vertical arrow be denoted by $\psi$. Since $\tilde{M}$ is a semi-direct product of $M$ and $S_3$, the map $\psi$  is surjective. For injectivity, we shall use 
the exact sequence of pointed sets:
\[ 
1 \rightarrow H^1(F, M) \rightarrow H^1(F, \tilde{M})\rightarrow H^1(F, S_3)  \rightarrow 1. 
\] 
Let $c\in H^1(F, S_3)$ and let $E$ be the \'etale cubic algebra corresponding to $c$. Then 
$M_E$ is the twist of $M$ by $c$. In order to prove that $\psi^{-1}(c)$ consists of one element, 
 it suffices to show that $H^1(F,M_E)$ is trivial, by the twisting argument on page 50 in \cite{Se}.  We have an exact sequence of algebraic groups 
 \[ 
 1 \rightarrow M_{E,\der}  \rightarrow M_E \rightarrow \GL_1  \rightarrow 1, 
\] 
where $M_{E,\der} \cong \Res_{E/F} \SL_2$. By Hilbert's Theorem 90,  $H^1(F, \GL_1)$ is trivial.  Since 
\[ 
H^1(F, \Res_{E/F} \SL_2) = H^1(E, \SL_2) = 0
\] 
(see \cite{Se}, page 130),  it follows that $H^1(F, M_{E})$ is trivial. 
 \vskip 5pt

\noindent (iii) This follows because $\Stab_{\tilde{M}}(v_{0} )  = \Stab_{M}(v_{0} ) \rtimes \Aut(\Pi)$, hence 
\[ 
H^1(F, \Stab_{\tilde{M}}(v_{0} )) \rightarrow H^1(F, \Aut(\Pi))
\] 
 has a natural splitting.
\end{proof} 

\vskip 5pt

\subsection{\bf Determination of orbits.}
We can now determine the generic $\tilde{M}_E(F)$-orbits on $V_E(F)$. 
\vskip 5pt

\begin{thm}  \label{T:main-orbit}
Fix an \'etale cubic $F$-algebra $E$.
\vskip 5pt

\noindent (i) The generic $\tilde{M}_E(F)$-orbits on $V_E(F)$ are in bijective correspondence with the set of $F$-isomorphism classes of $E$-twisted composition algebras over $F$, with the orbit of $v_{0,E} = (1,0,0,1)$ corresponding to the twisted composition algebra $C_E$ introduced in \ref{SS:example}.
\vskip 5pt

\noindent (ii) The generic $M_E(F)$-orbits on $V_E(F)$ are in bijective correspondence with the set of $E$-isomorphism classes of $E$-twisted composition algebras over $F$.
\vskip 5pt
\noindent  (iii) There is a commutative diagram
\[  \begin{CD}
\{ \text{$E$-twisted composition algebras} \}  @>>> \{ \text{\'etale quadratic $F$-algebras} \} \\
@VVV   @VVV   \\
\{ \text{generic $\tilde{M}_E$-orbits on $V_E$} \}  @> >>   F^{\times}/ F^{\times 2} \end{CD} \] 
where the bottom arrow is the map induced by $\Delta_E$ (see \S \ref{SS:action}).
\end{thm}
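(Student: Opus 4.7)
The strategy is a Galois cohomology argument that uses the commutative diagram (\ref{E:comm}) of Lemma 7.2 as its engine. The plan is to first parametrize generic orbits by cohomology of the stabilizer, then transport along the vertical bijections of (\ref{E:comm}) to the cohomology of the automorphism group of the split twisted composition algebra, and finally restrict to the fiber over $[E] \in H^1(F, S_3)$.

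\medskip

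First I would observe that over $\overline{F}$ any generic cube in $V_E$ is $\tilde{M}_E(\overline{F})$-equivalent to $v_{0,E}$. Indeed, $V_E \otimes \overline{F} \cong V$, and in the split case one reduces any cube to $(1,0,f,b)$ via the row-column operations of \S 6, then uses the action of the split diagonal torus on $(f,b)$ to reach $v_0$. Thus the generic locus $V_E^{\mathrm{gen}}$ is a single $\tilde{M}_E$-homogeneous space, and standard Galois descent identifies
\[ \tilde{M}_E(F) \backslash V_E^{\mathrm{gen}}(F) \;\longleftrightarrow\; \ker\!\bigl(H^1(F, \Stab_{\tilde{M}_E}(v_{0,E})) \to H^1(F, \tilde{M}_E)\bigr). \]
Using that $\tilde{M}_E$ is the twist of $\tilde{M}$ by $[E]$, together with the bijection $H^1(F, \tilde{M}) \cong H^1(F, S_3)$ of Lemma 7.2(ii), the cohomology $H^1(F, \tilde{M}_E)$ reduces to a point via the standard twisting bijection; hence the orbits are parametrized by $H^1(F, \Stab_{\tilde{M}_E}(v_{0,E}))$, which equals the fiber over $[E]$ of $H^1(F, \Stab_{\tilde{M}}(v_0)) \to H^1(F, S_3)$.

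\medskip

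Now for (i): by Proposition 6.1, $\Stab_{\tilde{M}}(v_0) = \Aut_F(C_0, Q_0, \beta_0)$ as subgroups of $(\GL_2(F)^3)^0 \rtimes S_3$, so the first vertical arrow of (\ref{E:comm}) is the identity on underlying groups. Composing, the generic $\tilde{M}_E(F)$-orbits match the fiber over $[E]$ of $H^1(F, \Aut_F(C_0, Q_0, \beta_0)) \to H^1(F, S_3)$, which by the cohomological description in \S 3 is exactly the set of $F$-isomorphism classes of $E$-twisted composition algebras; the orbit of $v_{0,E}$ corresponds to the distinguished point $C_E$ since both are the distinguished points in their respective fibers, furnished by the splittings in (\ref{E:split}) and \S 7.3. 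For (ii), I would rerun the same argument with $M_E$ in place of $\tilde{M}_E$: here $H^1(F, M_E)$ is trivial by Hilbert 90 (as in the proof of Lemma 7.2(ii)), and the identification $\Stab_{M_E}(v_{0,E}) \cong \Aut_E(C_E, Q, \beta)$ from \S 7.3 shows that $M_E(F)$-orbits correspond to $H^1(F, \Aut_E(C_E, Q, \beta))$, i.e.\ to $E$-isomorphism classes of $E$-twisted composition algebras.

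\medskip

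The main obstacle is (iii), which requires matching two a priori unrelated $H^1(F, \mathbb{Z}/2\mathbb{Z})$-valued invariants. My plan would be to factor both vertical maps through the component group
\[ \pi_0\bigl(\Stab_{\tilde{M}_E}(v_{0,E})\bigr) \;=\; \mathbb{Z}/2\mathbb{Z} \times S_E, \]
and show that both are induced by the projection to the first $\mathbb{Z}/2\mathbb{Z}$ factor. For the left column this is precisely the content of equation (\ref{E:P}), which was used to define $K_C$ in the Lemma of \S 3.8. For the right column, since $\Delta_E(v_{0,E}) = 1$ is a square, the induced map on the stabilizer records the sign by which each element rescales $\sqrt{\Delta_E}$; using the explicit formulas in \S 7.1 for the action of $\Stab_{\tilde{M}_E}(v_{0,E}) = E^1 \rtimes (\mathbb{Z}/2\mathbb{Z} \times S_E)$ on $V_E$ (diagonal $E^1$ preserves $\Delta_E$ pointwise, the element $w$ sends $(a,e,f,b) \mapsto (-b,-f,-e,-a)$ but preserves $\Delta_E$ on the identity component since $\Delta_E$ is $\tilde{M}$-invariant on $\SL_2(E)^0$, while the outer $\mathbb{Z}/2\mathbb{Z}$ generator acts through a sign on $P$ by (\ref{E:P})), one verifies that the induced character on $\pi_0$ is indeed the same projection. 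Commutativity in (iii) then follows by functoriality of $H^1$.
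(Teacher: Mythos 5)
Your arguments for (i) and (ii) are essentially the paper's own: they rest on the commutative diagram (\ref{E:comm}), the standard twisting bijection identifying the fiber over $[E]$ with $\ker\bigl(H^1(F, \Stab_{\tilde{M}_E}(v_{0,E})) \to H^1(F, \tilde{M}_E)\bigr)$, the vanishing of $H^1(F, M_E)$, and Propositions \ref{P:stab-1} and the one in \S\ref{SS:stab-E} identifying stabilizers with automorphism groups of the distinguished twisted composition algebra. These parts are sound and match the paper.

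For (iii), your high-level plan --- transport both horizontal arrows along the bijections of (i) to cocycles valued in $\Stab_{M_E}(v_{0,E})$ and show they are both induced by the projection $\pi$ to the $\Z/2\Z$ component factor --- is indeed the paper's strategy, and you correctly observe that (\ref{E:P}) and the Lemma following (\ref{E:K_C}) handle the top arrow. But the treatment of the bottom arrow has a genuine gap. The orbit invariant is the square class of $\Delta_E(v)$ for a point $v$ of the orbit, and such $v$ is \emph{not} fixed by $\Stab_{M_E}(v_{0,E})$; moreover $\Delta_E$ is quartic and is not a perfect square of a polynomial on $V_E$, so the proposed notion of ``the sign by which each stabilizer element rescales $\sqrt{\Delta_E}$'' is not defined as stated. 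The aside that $w$ ``preserves $\Delta_E$ on the identity component since $\Delta_E$ is $\tilde{M}$-invariant on $\SL_2(E)^0$'' also does not parse: $w \notin \SL_2(E)$, and $\Delta_E(wv) = \Delta_E(v)$ for all $v$ simply because $\det(w)^2 = 1$; this tells you nothing about the character on $\pi_0$.

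What is needed and missing is the explicit coboundary computation. Since $H^1(F, M_E) = 1$, split the cocycle as $a_\sigma = g^{-1}\sigma(g)$ with $g \in M_E(\overline{F})$, so that the associated orbit is that of $g \cdot v_{0,E}$, and compute
\[ \Delta_E(g \cdot v_{0,E}) = \det(g)^2 \cdot \Delta_E(v_{0,E}) = \det(g)^2. \]
Then observe that $\pi$ restricted to $\Stab_{M_E}(v_{0,E}) = E^1 \rtimes \Z/2\Z \subset \GL_2(E)^0$ is just the determinant, so
\[ \pi(a_\sigma) = \det(a_\sigma) = \det(g)^{-1}\,\sigma(\det(g)). \]
This identifies $\det(g)$ as a trace-zero element generating $K$, i.e.\ the square class of $\Delta_E(g\cdot v_{0,E}) = \det(g)^2$ is precisely $[K]$. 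Without this step the asserted factorization of the bottom arrow through $\pi_0$ is unestablished, and the functoriality argument you invoke at the end has nothing to apply to.
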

\vskip 5pt

\begin{proof}

(i) Given a cohomology class $[E] \in H^1(F, S_3)$ corresponding to an \'etale cubic $F$-algebra,  we consider the fibers of the two horizontal arrows in the commutative diagram (\ref{E:comm}) over $[E]$. 
Since the map $\Stab_{\tilde{M}}(v_{0} ) \longrightarrow S_3$ splits, the fiber of the second horizontal arrow has a distinguished element which corresponds to the twisted composition algebra $C_E$. 
Similarly, the fiber over $[E]$ of the  first horizontal arrow has a distinguished point which corresponds to the orbit of $v_{0,E} = (1,0,0,-1)$. Moreover, these two distinguished point correspond under the first vertical arrow. 
\vskip 5pt

By the twisting argument, see \cite{Se}, page 50,  we see that both fibers in question are naturally identified  with
\[  {\rm Ker}(H^1(F, \Stab_{\tilde{M}_E}(v_{0,E})) \longrightarrow H^1(F, \tilde{M}_E)). \]
  Thus, the fiber of the first horizontal map over $[E]$ are 
the generic $\tilde M_E$-orbits in $V_E$, while the fibers of the second map are $F$-isomorphism classes of $E$-twisted composition algebras. 
\vskip 5pt

\noindent (ii) The bijection follows because both sets are in natural bijection with
$H^1(F, \Stab_{M_E}(v_{0,E}) =  H^1(F, \Aut_E(C_E))$.

\vskip 5pt

\noindent (iii) Suppose an $E$-twisted composition algebra is represented by a cocycle 
\[ 
(a_{\sigma}) \in H^1(F, \Stab_{M_E}(v_{0,E}) ).
\] 
 Then the associated \'etale quadratic $F$-algebra $K$ corresponds to the group homomorphism
\[  \begin{CD} \eta_K:  \Gal(\bar{F}/F) @>>> \Stab_{M_E}(v_{0,E}) (\bar{F})  @>>> \Z/2\Z  \end{CD} \]
given by $\sigma \mapsto a_{\sigma} \mapsto  \pi(a_{\sigma})$, where $\pi: \Stab_{M_E}(v_{0,E}) \rightarrow \Z/2\Z$ is the natural projection. In fact, regarding $\Stab_{M_E}(v_{0,E})  \subseteq M_E$ as described in \S \ref{SS:stab-E}, we see that the map $\pi$ is simply given by the determinant map on $M_E = \GL_2(E)^0$. 
\vskip 5pt

On the other hand, the cocycle splits in $H^1(F, M_E) =0$, so that we may write
\[  a_{\sigma}  =  g^{-1} \cdot \sigma(g) \quad \text{for some $g \in M_E(\bar{F})$}. \]
Then the $\tilde{M}_E$-orbit associated to $(a_{\sigma})$ is that of $g \cdot v_{0,E}$. 
Now we have:
\[  \Delta_E(g \cdot v_{0,E}) = \det(g)^2 \cdot \Delta_E(v_{0,E}) = \det(g)^2 \]
and
\[  \eta_K(\sigma)  = \det(a_{\sigma}) = \det(g)^{-1} \cdot \sigma( \det(g))  \]
for any $\sigma \in \Gal(\bar{F}/F)$. This shows that $\det(g)$ is a trace zero element in $K$, so that $K$ is represented by the square class of $\det(g)^2 \in F^{\times}$, as desired.
\vskip 5pt

The theorem is proved.
\end{proof}
\vskip 5pt

In particular, we have established Theorem \ref{T:main}.   However, the bijection  between the generic $\tilde{M}_E(F)$-orbits on $V_E(F)$ and the $E$-isomorphism classes of twisted composition algebras is obtained by a Galois cohomological argument, which is quite formal and  not at all explicit. For applications, it is necessary to have an explicit description of the bijection. We shall arrive at such an explicit description in the following sections.

\vskip 10pt

\section{\bf Re-interpreting Bhargava}  \label{S:reinter}

In this section, revisiting the case when $E = F^3$ is split, we shall re-interpret Bhargava's results in \cite{B1} in the framework of twisted composition algebras,  leading to an explicit recipe for the bijection  in Theorem \ref{T:main-orbit}.  
\vskip 15pt
 
\subsection{\bf Bhargava's result.}
We first review briefly Barghava's results and, following him, we shall work over $\mathbb Z$. Note that we have an action of the group 
$\SL_2(\mathbb Z)^3$ on the set of integer valued cubes, by the ``row-column" operations  as described in \S \ref{SS:cube}.
\vskip 5pt

In order to  state the main result of Bhargava, we need a couple of definitions.
Fix a discriminant $\Delta$. Let $K=\mathbb Q(\sqrt{\Delta})$ and $R$ the unique 
order of discriminant $\Delta$.  A module $M$ is a full lattice in $K$. In particular, it is 
a $\mathbb Z$-module of rank 2. We shall write $M=\{u,v\}$ if  $u$ and $v$ span $M$.  For example, 
\[ 
R=\{ 1, \frac{\Delta +\sqrt{\Delta}}{2}\} .
\] 
By fixing this basis of $R$, we have also fixed a preferred orientation of bases of modules. 
An oriented module is a pair $(M, \epsilon)$ where $\epsilon$ is a sign. If $M=\{u,v\}$, then $M$ becomes an oriented module
$(M,\epsilon)$,  where  $\epsilon=1$ if and only if  the orientation of $\{u,v\}$ is preferred. The norm of an oriented module 
$(M,\epsilon)$ is $N(M)= \epsilon\cdot [R:M]$. 
\vskip 5pt

Then:
\vskip 5pt

\begin{itemize}
\item  a triple of oriented modules $(M_1, M_2, M_3)$ with $R$ as the 
multiplier ring,  is said to 
be {\em colinear}, if there exists $\delta\in K^{\times}$ such that the product of the 
three oriented modules is a principal oriented ideal $((\delta), \epsilon)$
where $\epsilon=sign(N(\delta))$, i.e.,  $M_1M_2 M_3 = (\delta)$, as 
ordinary modules, and $N(M_1)N(M_2)N(M_3)=N(\delta)$. 
\vskip 5pt

\item a cube is {\em projective} of discriminant $\Delta$
 if the three associated forms are primitive and have the discriminant $\Delta$.

\vskip 5pt

\item  two triples of oriented modules $(M_{1}, M_{2}, M_{3})$ and 
$(M'_{1}, M'_{2}, M'_{3})$ are equivalent if there exist $\mu_{1}, \mu_{2}, \mu_{3}$ 
in $K^{\times}$ such that $M'_{i}=\mu_{i} M_{i}$ and 
$\epsilon'_{i}= sign(N(\mu_{i}))\epsilon_{i}$ for $i=1,2,3$. 
\end{itemize}
\vskip 5pt

Then Bhargava \cite{B1} showed:
\vskip 5pt

\begin{thm}  \label{T:bhargava}
There is a bijection, to be described in the proof, between the equivalence classes
 of oriented colinear triples of discriminant $\Delta$ and   
$SL_2(\mathbb Z)^3$-equivalence classes of projective cubes of discriminant $\Delta$. 
\end{thm}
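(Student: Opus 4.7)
The plan is to construct the bijection explicitly using Gauss's classical correspondence between primitive binary quadratic forms of discriminant $\Delta$ and equivalence classes of oriented proper modules over $R$, the unique quadratic order of discriminant $\Delta$ in $K = \mathbb{Q}(\sqrt{\Delta})$.  For the forward map, given a projective cube $v$, slice it in the three possible ways to obtain the three pairs $(A_i, B_i)$ and the three primitive binary quadratic forms $Q_i(x, y) = -\det(A_i x + B_i y)$ of common discriminant $\Delta$.  To each $Q_i = p_i x^2 + q_i xy + r_i y^2$ we attach the oriented module
\[
M_i = p_i \, \mathbb{Z} + \frac{-q_i + \sqrt{\Delta}}{2} \, \mathbb{Z} \subset K,
\]
with orientation inherited from this basis together with the choice of slicing direction.

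For the inverse, given an oriented colinear triple $(M_1, M_2, M_3)$ with $M_1 M_2 M_3 = (\delta)$ and $N(M_1) N(M_2) N(M_3) = N(\delta)$, fix oriented $\mathbb{Z}$-bases $M_i = \{u_i, v_i\}$. The $\mathbb{Z}$-trilinear multiplication map $M_1 \otimes_{\mathbb{Z}} M_2 \otimes_{\mathbb{Z}} M_3 \to R \delta$, composed with division by $\delta$ and a fixed identification $R = \mathbb{Z} \oplus \omega \mathbb{Z}$ (where $\omega = (\Delta + \sqrt{\Delta})/2$), reads off eight integer coefficients that we arrange at the vertices of a cube, recovering Bhargava's original construction. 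The equivalence by $(\mu_1, \mu_2, \mu_3) \in (K^{\times})^3$ (with the prescribed sign condition) replaces $\delta$ by $\mu_1 \mu_2 \mu_3 \, \delta$, leaving the cube invariant, while changing the $\mathbb{Z}$-bases of the $M_i$ by $\SL_2(\mathbb{Z})$ corresponds exactly to the $\SL_2(\mathbb{Z})^3$-action on cubes.

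The main obstacle is the colinearity check in the forward direction: one must show that the modules $M_i$ arising from a projective cube always satisfy $M_1 M_2 M_3 = (\delta)$ principal with $N(\delta) = \prod N(M_i)$, which is exactly Gauss's composition law re-cast in the cube formulation. I would verify this by reducing $v$ to the normal form $(1, 0, f, b)$ via the $\SL_2(\mathbb{Z})^3$-action, so that the three forms take the explicit shape $Q_i(x, y) = -f_i x^2 - b xy + f_j f_k y^2$, and then exhibiting $\delta \in K$ directly in terms of $f_1, f_2, f_3, b$ using $\Delta = b^2 + 4 f_1 f_2 f_3$; the norm identity reduces to a straightforward computation.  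Once colinearity is established, checking that the two maps are mutually inverse amounts to unwinding the trilinear multiplication against the chosen bases, and the remaining subtlety is a careful orientation bookkeeping so that Gauss's a priori two-valued composition lifts uniquely to the oriented setting.
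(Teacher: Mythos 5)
Your plan is essentially the same as the paper's own sketch: both reduce the cube to the normal form $(1,0,f,b)$ by the $\SL_2(\mathbb{Z})^3$-action, both apply Gauss's classical form-to-module dictionary to the three sliced forms $Q_i$ (the paper normalizes so $1\in M_i$, you use the raw $p_i\mathbb{Z}+\frac{-q_i+\sqrt\Delta}{2}\mathbb{Z}$, but these differ only by a scaling absorbed in the equivalence), and both verify colinearity by exhibiting $\delta$ explicitly in terms of $f_1,f_2,f_3,b$ using $\Delta=b^2+4f_1f_2f_3$ (the paper takes $\delta=-2/(b+\sqrt\Delta)$). You additionally sketch the inverse map via the trilinear multiplication $M_1\otimes M_2\otimes M_3\to R\delta$, which the paper leaves implicit and defers to Bhargava's original paper; this is a welcome piece of completeness but not a different strategy.
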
 

\noindent{\bf \underline{Sketch of Proof:}}
 Let $v$ be a projective cube. Again, without any loss of generality we can assume that 
the cube is reduced and that the numbers $f_1$, $f_2$ and $f_3$ are nonzero. Define three modules by   
$$ 
M_1 = \{ 1, \frac{b-\sqrt{\Delta}}{2f_1} \}, \hskip 2pt   
M_2 =\{ 1, \frac{b-\sqrt{\Delta}}{2f_2}\} \hskip 2pt \text{and }
M_3=\{ 1, \frac{b-\sqrt{\Delta}}{2f_3} \}. 
$$
The norms of the three modules are $-1/f_1, -1/f_2$ and $-1/f_3$, respectively, if we take the 
given bases to be proper. For $\delta$ we shall take  
$$ 
\delta = -\frac{2}{b+\sqrt{\Delta}}, 
$$
which has the correct norm $-1/f_1 f_2 f_3$. 

The modules  $M_i$, with given oriented bases, correspond to the quadratic forms $Q_i$. More precisely, if 
\[ 
z_i= x_i + y_i\frac{b-\sqrt{\Delta}}{2f_i}\in M_i
\] 
then 
\[ 
-f_iN(z_i) =Q_i(x_i,y_i)=-f_i x^2_i -  bx_i y_i + f^{\#}_i y_i^2 
\] 
where $f^{\#}=(f_2 f_3, f_3 f_1, f_1 f_2)$. 
\qed

\vskip 10pt

\subsection{\bf Integral Twisted Composition algebras}  \label{SS:integral} 

We can now give a re-interpretation of Bhargava's results, in particular of Barghava's triples $(M_1,M_2,M_3)$, in the framework of twisted composition algebras. 
 Assume the notation from the previous subsection, so that  $M_1 M_2 M_3= (\delta)$. Set 
\[  C =M_1 \oplus M_2\oplus M_3. \]
We shall define a pair of tensors $(Q, \beta)$ on $C$ as follows:
\vskip 5pt

\begin{itemize}
\item Define a quadratic form  $Q: C \rightarrow \mathbb Z \times \mathbb Z\times \mathbb Z$   by 
\[ 
Q(z_1,z_2,z_3)= (-f_1 N(z_1), -f_2 N(z_2), -f_3 N(z_3))= -f\cdot (N(z_1), N(z_2), N(z_3)). 
\]
\vskip 5pt

\item Define a quadratic map   $\beta: C \rightarrow C$ by
\[ 
\beta(z_1,z_2,z_3) = \delta(f_2 f_3 \bar z_2 \bar z_3, f_3 f_1 \bar z_3 \bar z_1,f_1 f_2 \bar z_1 \bar z_2)=
\delta \cdot f^{\#} \cdot (\bar z_1, \bar z_2,\bar z_3)^{\#}. 
\]
\end{itemize}
The relations  $M_1 M_2 M_3 = (\delta)$ and $M\bar M=N(M)$ imply that $\beta$ is well defined. Moreover, 
using $N(\delta)=-1/f_1 f_2 f_3$, one checks that 
\[ 
Q(\beta(z_1,z_2,z_3))=Q(z_1,z_2,z_3)^{\#}
\] 
and 
\[ 
N_C(z_1,z_2,z_3)= Tr (\frac{z_1 z_2 z_3}{\delta}). 
\] 
Thus  the triple $(C,Q,\beta)$ is a twisted composition algebra over $\mathbb{Z}$. 
\vskip 5pt

In terms of  the coordinates $(x_i,y_i)$ given by 
\[ 
z_i= x_i + y_i\frac{b-\sqrt{\Delta}}{2f_i},
\] 
we have seen in the sketch  proof of Theorem \ref{T:bhargava} that
\[  Q_i(z_i)=  -f_iN(z_i) =-f_i x^2_i - bx_i y_i + f^{\#}_i y_i^2. \]
We shall now do the same for $\beta$. Write $\beta(z_1,z_2,z_3)=(z'_1,z'_2,z'_3)$, and  let $(x'_i,y'_i)$ be the coordinates of $z'_i$.  A short calculation shows that 
\[ 
x'_1 = - \left(\begin{array}{cc} x_3 &  y_3 \end{array}\right) \left(\begin{array}{cc} 0 & f_3 \\ f_2 & b \end{array}\right) 
\left(\begin{array}{c} x_2 \\
  y_2 \end{array}\right) 
\] 
\[ 
y'_1 =  \left(\begin{array}{cc} x_3 &  y_3 \end{array}\right) \left(\begin{array}{cc} 1 & 0 \\ 0  & f_1 \end{array}\right) 
\left(\begin{array}{c} x_2 \\
  y_2 \end{array}\right)
\] 
while the expressions for $(x'_2,y'_2)$ and $(x_3,y_3)$ are obtained by cyclicly permuting the indices.  
\vskip 5pt

There are two important observations to be made here:
\vskip 5pt

\begin{itemize}
\item Firstly, these formulas make sense for any triple $(f_1,f_2,f_3)$ and any $b$, i.e. the $f_i$'s can be zero. 
The axioms of twisted composition algebra are satisfied for formal reasons. 
For example,  if $(f_1,f_2,f_3)=(0,0,0)$ and $b=-1$, we get the split algebra $C_0$.  
\vskip 5pt

\item Secondly, the two matrices are two opposite faces of the cube. This gives a hint how to directly associate a composition algebra to any cube in general (i.e. not just a reduced cube). 
\end{itemize}
\vskip 10pt

\subsection{\bf From cubes to twisted composition algebras.}
The above discussion suggests an explicit recipe for
 associating a twisted composition algebra over $F \times F\times F$ to any cube $v \in V(F)$. 
\vskip 5pt

Let $C =F^2 \times F^2 \times F^2$. An element  $z\in C$ is 
a triple $(z_1,z_2,z_3)$ of column vectors  $z_i= \left(\begin{smallmatrix}x_i\\
y_i \end{smallmatrix}\right)$. Slice a cube into three pairs of $2\times 2$-matrices $(A_i,B_i)$, as before and let
\[  Q_i(z_i)=-\det(A_i x_i + B_i y_i).\]
Then we set:
\vskip 5pt

\begin{itemize}
\item  $Q: C  \rightarrow F \times F\times F$ is defined by 
\[ 
Q(z_1,z_2,z_3)= (Q_1(z_1), Q_2(z_2), Q_3(z_3)). 
\] 
\vskip 5pt

\item $\beta: C \rightarrow C$ is defined by
\[   \beta(z_1,z_2,z_3)=(z'_1,z'_2,z'_3) \]
where $z'_i= \left(\begin{smallmatrix}x'_i\\
y'_i \end{smallmatrix}\right)$, 
\[ 
x'_1=  -z_3^{\top} B_1 z_2 , \hskip 10pt  
x'_2=  -z_1^{\top} B_2 z_3 , \hskip 10pt 
x'_3= - z_2^{\top} B_3 z_1 
\] 
and 
\[ 
y'_1=   z_3^{\top} A_1 z_2  , \hskip 10pt  
y'_2=  z_1^{\top} A_2 z_3 , \hskip 10pt 
y'_3=   z_2^{\top} A_3 z_1. 
\] 
\end{itemize}
\vskip 5pt

Thus, starting from a cube $v$, we have defined a pair of tensors $(Q, \beta)$ on  $C=F^2 \times F^2 \times F^2$.  Let 
\[ \tilde{\phi}:  V(F) \longrightarrow \{  \text{tensors $(Q, \beta)$ on $C$} \}  \]
be the resulting map.  We may express this map using the coordinates $(a,e,f,b)$ of a cube. A short calculation gives:
\begin{align}  
&Q(x,y)  = (e^{\#} - af) x^2 + (- ab - 2ef + Tr(ef)) xy + (f^{\#} - be)  y^2   \notag \\ 
&\beta(x,y)  = (-ex^{\#} - b y^{\#} - (fx) \times y,   ax^{\#} + fy^{\#} + (ey) \times x).  \notag 
\end{align}

In the next section, we shall study the properties of the map $\tilde{\phi}$; for example, we shall show that a $(Q,\beta)$ in the image of $\tilde{\phi}$ does define a twisted composition algebra on $C$.

\vskip 10pt

\section{\bf Explicit Parametrization}  \label{S:explicit}

Using the results of the previous section, we can now give an explicit description of the bijection between  $\tilde{M}_E(F)$-orbits of nondegenerate cubes and $F$-isomorphism classes of $E$-twisted composition algebras. 
 \vskip 5pt
 
\subsection{\bf Definition of $\tilde{\phi}$.}
Let us  write $C= E\cdot e_1  \oplus E\cdot e_2$.   Motivated by the case where $E = F^3$ studed in the previous section, we define the map 
\[ \tilde{\phi}:  V_E(F) \longrightarrow \{  \text{tensors $(Q, \beta)$ on $C$} \}  \]
 using the coordinates $v = (a, e, f, b)$ of a cube, with $a,b \in F$ and $e,f \in E$ by: 
\begin{align} \label{E:Qb} 
&Q(x,y)  = (e^{\#} - af) x^2 + (-ab-2ef + Tr(ef)) xy + (f^{\#} - be)  y^2  \\
&\beta(x,y)  = (-ex^{\#} - b y^{\#} - (fx) \times y,   ax^{\#} + fy^{\#} + (ey) \times x).  \notag 
\end{align}
In particular, for a reduced cube $(1,0,f,b)$, one has
\begin{align} \label{E:Qb2} 
&Q(x,y)  = -f  x^2 - b xy + f^{\#}   y^2  \\
&\beta(x,y)  = ( -b y^{\#} - (fx) \times y,   x^{\#} + fy^{\#}).  \notag 
\end{align}
Thus, the image of the distinguished cube $v_{E,0} = (1,0,0,-1)$ is the algebra $C_E$. 
Observe also that one has
\begin{equation}
\label{E:reduced-form}
 \beta(1,0)  = (0,1) \quad \text{and}\quad \beta(0,1) = (-b, f). \end{equation}
Thus, the standard basis $\{e_1, e_2 \}$ is a reduced basis with respect to $(Q,\beta)$, in the sense of Section \ref{S:reduced-basis}. 
\vskip 10pt

One has:
\vskip 5pt

\begin{prop}  \label{P:bij}
(i) The map $\tilde{\phi}$ is injective.
\vskip 5pt

(ii) For $g \in \GL_2(E)^0$ and for $\sigma \in S_E(F)$, , one has
\[  \tilde{\phi}(g \cdot v)  = {^t}g^{-1} \cdot \tilde{\phi}(v) \quad \text{and} \quad   \tilde{\phi}(\sigma \cdot v) =  \sigma \cdot \tilde{\phi}(v) \]
for any $v \in V_E(F)$.
\vskip 5pt

Thus, the map $\tilde{\phi}$ is $\GL_2(E)^0\rtimes S_E$-equivariant, with respect to the outer automorphism 
$(g, \sigma)  \mapsto ( {^t}g^{-1}, \sigma)$ of $\GL_2(E)^0 \rtimes S_E$, and where the action of $\GL_2(E)^0  \rtimes S_E$ on the set of $(Q, \beta)$ is given as in Section \ref{SS:isom}. 
\vskip 5pt

(iii) For any nondegenerate cube $v$, $\tilde{\phi}(v)  = (Q,\beta)$ defines a twisted composition algebra on 
$C$.  
\end{prop}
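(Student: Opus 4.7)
For Part (i), I would directly evaluate the formula (\ref{E:Qb}) for $\beta$ at the standard basis vectors of $C$: $\beta(1,0) = (-e, a)$ and $\beta(0,1) = (-b, f)$. Thus the quadruple $(a, e, f, b)$ is recovered from $\beta$ alone, giving injectivity. As a sanity check, $v_{0,E} = (1,0,0,-1)$ is sent to $Q(x,y) = xy$ and $\beta(x,y) = (y^{\#}, x^{\#})$, i.e.\ to the distinguished composition algebra $C_E$ of \S\ref{SS:example}, in agreement with the remark following (\ref{E:Qb2}).

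For Part (ii), $S_E$-equivariance is immediate: the formulas in (\ref{E:Qb}) are built from $E$-algebra operations ($\#$, cross product, trace, multiplication) that all commute with any $F$-algebra automorphism of $E$. For $\GL_2(E)^0$, the plan is to check equivariance on a set of generators, namely the Weyl element $w$, the diagonal torus $t_{\alpha,\beta}$ with $\alpha\beta \in F^{\times}$ (whose action on $V_E$ is recorded in \S\ref{SS:action}), and the upper and lower unipotent subgroups of $\GL_2(E)$. The case of $w$ is especially clean: $w$ sends $(a,e,f,b) \mapsto (-b,-f,-e,-a)$ while ${^t}w^{-1} = w$ acts on $C$ by swapping the two coordinates, and (\ref{E:Qb}) is manifestly symmetric under this combined transformation. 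The torus and unipotent cases are direct substitutions, using compatibility of $\#$ with multiplication together with the identity $(\lambda u) \times (\lambda v) = \lambda^{\#}(u \times v)$.

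For Part (iii), the conditions defining a twisted composition algebra, other than nondegeneracy of $Q$, are polynomial identities in $(a,e,f,b;x,y)$ over $E$. Any such identity holds over $F$ if and only if it holds after base change to $\bar F$, and over $\bar F$ the algebra $E$ splits to $\bar F^3$ and (\ref{E:Qb}) specializes to the Bhargava-style construction of \S\ref{SS:integral}, which has already been verified to yield a twisted composition algebra. Nondegeneracy of $Q$ can be handled by using Part (ii) to pass to a reduced cube $v = (1,0,f,b)$, for which the discriminant of $Q$ in the standard $E$-basis of $C$ equals $b^{2} + 4 N_E(f) = \Delta_E(v)$, nonzero by the assumption of nondegeneracy; by the $\GL_2(E)^0$-quasi-invariance of $\Delta_E$ and the action of Section \ref{SS:isom}, this propagates to general $v$.

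The main obstacle will be the case-by-case equivariance verification in Part (ii): the explicit action of the unipotent root subgroups of $\GL_2(E)^0$ on $V_E$ is not recorded in the excerpt, so it must first be extracted from the Chevalley-Steinberg pinning before the equivariance check can be carried out. Once Part (ii) is in hand, Part (i) is immediate and Part (iii) is essentially inherited from the already-established split case of \S\ref{SS:integral}.
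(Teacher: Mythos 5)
Your argument follows the same strategy as the paper's for all three parts.  In (i), the paper evaluates $\beta$ at the two standard basis vectors exactly as you do and recovers $(a,e,f,b)$ from $\beta(1,0)=(-e,a)$ and $\beta(0,1)=(-b,f)$.  In (iii), the paper likewise reduces to the split case over $\bar F$ and invokes the computations of \S\ref{SS:integral} for reduced cubes together with part (ii); your treatment of the nondegeneracy of $Q$ (computing the Gram determinant on a reduced cube as $b^2+4N_E(f)=\Delta_E(v)$ and propagating by quasi-invariance) is a bit more explicit than the paper's terse ``the result follows,'' but it is the same idea.

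The obstacle you flag in (ii) is not really there, and in fact the paper's phrasing shows why: since the identity $\tilde\phi(g\cdot v)={^t}g^{-1}\cdot\tilde\phi(v)$ is polynomial in the entries of $g$ and $v$, it suffices to verify it after base change to $\bar F$, where $E\otimes_F\bar F\cong\bar F^3$, $\GL_2(E)^0$ becomes $(\GL_2^3)^0$, and the action on cubes is the fully explicit row-column operations of \S\ref{SS:cube}.  No appeal to the Chevalley--Steinberg pinning is needed.  Over $\bar F$, the paper then checks equivariance for a central scalar, for $\sigma\in S_3$, and for a single elementary matrix $(E_u,1,1)$ (which by the already-established $S_3$-equivariance covers all three $\SL_2$ factors), and these suffice to generate.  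Your alternative generating set (Weyl element, diagonal torus, unipotents), with the clean observation for the Weyl element, is perfectly valid and would also work; just pass to $\bar F$ first so that all the relevant actions are explicit, rather than trying to extract the unipotent action on $V_E(F)$ directly from the pinning.
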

\vskip 5pt

\begin{proof}
(i) If $\tilde{\phi}(a,e,f,b) = (Q, \beta)$, then 
\[  \beta(1,0) = (-e,a) \quad \text{and} \quad \beta(0,1) = (-b, f). \]
Hence the cube $(a,e,f,b)$ is uniquely determined by $\beta$.
\vskip 5pt

\noindent (ii) We can verify this equivariance property over $\overline{F}$; thus we only need to check it for $E = F^3$.  For the central element $(t,t,t) \in \GL_2(E)^0$ or the element $\sigma \in S_E$, the desired equivariance property is clear. Thus, it remains to verify it for elementary matrices such as 
\[   g = (E_u, 1,1) = \left( \left( \begin{array}{cc}
1 & u \\
0 & 1 \end{array} \right) , 1, 1\right)  \in (\GL_2(F) \times \GL_2(F) \times \GL_2(F))^0. \]
  Now if the cube $v$ has a pair of faces $(A_1, B_1)$, then the corresponding pair for $g \cdot v$ is 
 \[  (A'_1, B'_1) = (A_1 + uB_1, B_1). \]
 Slicing the cube in the other two ways, we obtain
 \[  (A_2', B_2')  = ( E_u A_2 , E_u B_2) \quad \text{and} \quad (A'_3, B'_3)  = (A_3E_u^t, B_3E_u^t). \] 
Hence, if  $\tilde{\phi}(g \cdot v) = (Q',\beta')$, then $\beta'$  is given on $(z_1, z_2, z_3) \in F^2 \times F^2 \times F^2$ by
 \[  
 \left( \begin{array}{ccc}
 x'_1 & x'_2 & x'_3 \\
 y'_1 & y'_2 & y'_3 \end{array} \right)  = \left( \begin{array}{ccc}
   - z_3^t B_1 z_2 &  - z_1^t E_u B_2 z_3 &   -z_2^t B_3 E_u^t  z_1  \\
  z_3^t (A_1 + uB_1) z_2 &  z_1^t E_u A_2   z_3 &   z_2^t  A_3E_u^t z_1 \end{array} \right). \] 
 On the other hand, ${^t}g^{-1} $ acts on $\beta$ by precomposing by  $({^t}g^{-1})^{-1}= g^t$, and post composing by ${^t}g^{-1}$: 
 \begin{align}
  {^t}g^{-1}  \cdot  \beta (g^t(z_1, z_2, z_3))  
 &= {^t}g^{-1} \cdot  \beta(E_u^t z_1, z_2, z_3) \notag \\
  &={^t}g^{-1} \cdot  \left( \begin{array}{ccc}
   - z_3^t B_1 z_2 &  - z_1^t E_u B_2 z_3 &   -z_2^t B_3 E_u^t  z_1  \\
  z_3^t A_1 z_2 &  z_1^t E_u A_2   z_3 &   z_2^t  A_3E_u^t z_1 \end{array} \right)\notag \\
 &= 
  \left(  \begin{array}{ccc} 
 - z_3^t B_1 z_2    &  - z_1^t E_u B_2 z_3 &   -z_2^t B_3 E_u^t  z_1 \\
z_3^t (A_1+uB_1)z_2  & z_1^t E_u A_2   z_3 &   z_2^t  A_3E_u^t z_1  \end{array} \right)   \notag \\
&=\beta'(z_1,z_2,z_3). \notag
\end{align}

\vskip 5pt

\noindent (iii) Again, we may work over $\overline{F}$ and hence we may assume that $E = F^3$. If $v$ is a reduced cube, we  have seen in  \S \ref{SS:integral} that $(Q,\beta)$ defines a twisted composition algebra on $E^2$. Since every $\tilde{M}(F)$-orbit contains a reduced cube, the result follows by (ii).
\end{proof}
 \vskip 5pt
 
 The occurrence of the outer automorphism $g \mapsto {^t}g^{-1}$ is natural here. Indeed, assume that $E = F^3$ and  regard $\GL_2(F)$ as $\GL(V)$ for a $2$-dimensional $F$-vector space $V$. Then the quadratic map $\beta$ is an element of $(V^{\ast})^{\oplus 3} \otimes_F (V^{\ast})^{\oplus 3} \otimes_F V^{\oplus 3}$, whereas its associated cube is an element in $V \otimes_F V \otimes_F V \otimes_F \det(V)^{-1}$.
Thus scaling a cube by $t \in F^{\times}$  corresponds to scaling $\beta$ by $t^{-1}$.    
\vskip 10pt

\subsection{\bf Reduced cubes and bases} To describe the image of $\tilde{\phi}$, we examine the case of reduced cubes more carefully.  

\vskip 5pt

\begin{prop}  \label{P:reduced-basis2}
 Suppose that the pair $(Q,\beta)$ defines a twisted composition algebra structure on $E^2$ such that the standard basis $\{e_1, e_2\}$ is reduced (i.e. $\beta(e_1)  = e_2$). Then $(Q,\beta)$ 
   is the image under $\tilde{\phi}$ of the reduced cube $v = (1,0,-Q(e_1),-N_{Q,\beta}(e_1))$. Moreover, 
   $\Delta_E(v)  =  \Delta_{Q,\beta}(e_1)$ (where the $\Delta$ on the LHS is the quasi-invariant form on the space $V_E$ of cubes while that on the left is defined in Proposition \ref{P:reduced-basis}).
\end{prop}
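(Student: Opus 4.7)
The plan is to verify the claim by direct comparison between the formulas defining $\tilde{\phi}$ on a reduced cube and the structure identities that any twisted composition algebra must satisfy on a reduced basis. Set $f_0 = -Q(e_1) \in E$ and $b_0 = -N_{Q,\beta}(e_1) \in F$ (the latter lies in $F$ by the definition of a twisted composition algebra), and let $v = (1, 0, f_0, b_0)$. Write $\tilde{\phi}(v) = (Q', \beta')$.

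First I would check that $Q' = Q$. From (\ref{E:Qb2}) with our chosen $f_0, b_0$, one reads off
\[
Q'(x,y) = Q(e_1) x^2 + N_{Q,\beta}(e_1) xy + Q(e_1)^{\#} y^2.
\]
On the other hand, expanding $Q(xe_1 + ye_2)$ bilinearly yields $Q(e_1) x^2 + b_Q(e_1,e_2) xy + Q(e_2) y^2$. Since $e_2 = \beta(e_1)$, the defining axioms give $Q(e_2) = Q(\beta(e_1)) = Q(e_1)^{\#}$ and $b_Q(e_1, e_2) = b_Q(e_1,\beta(e_1)) = N_{Q,\beta}(e_1)$, matching $Q'$ term by term.

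Next I would verify $\beta' = \beta$, and here the key input is the identity (\ref{E:beta-reduced}) from Lemma \ref{L:ID}, applied with $v = e_1$. That identity reads
\[
\beta(x e_1 + y \beta(e_1)) = \bigl(y^{\#} N_{Q,\beta}(e_1) - (-Q(e_1) x) \times y\bigr) e_1 + \bigl(x^{\#} - Q(e_1) y^{\#}\bigr) \beta(e_1).
\]
Substituting $f_0 = -Q(e_1)$ and $b_0 = -N_{Q,\beta}(e_1)$ turns the right-hand side into exactly
\[
\bigl(-b_0 y^{\#} - (f_0 x) \times y,\ x^{\#} + f_0 y^{\#}\bigr),
\]
which is precisely $\beta'(x,y)$ by (\ref{E:Qb2}). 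So $\beta = \beta'$ as quadratic maps, and therefore $\tilde\phi(v) = (Q,\beta)$.

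Finally, the discriminant comparison is immediate: by the formula $\Delta_E(1,0,f,b) = b^2 + 4 N_{E/F}(f)$ and the fact that $N_{E/F}$ is a cubic form (so $N_{E/F}(-Q(e_1)) = -N_E(Q(e_1))$),
\[
\Delta_E(v) = N_{Q,\beta}(e_1)^2 - 4 N_E(Q(e_1)) = N_C(e_1)^2 - 4 N_E(Q(e_1)) = \Delta_{Q,\beta}(e_1),
\]
matching the definition in Proposition \ref{P:reduced-basis}(i). There is no real obstacle here; the only nontrivial step is recognizing that the formula (\ref{E:beta-reduced}) for $\beta$ on a reduced basis is literally the formula built into $\tilde{\phi}$ for a reduced cube, which is what makes the construction work on the nose.
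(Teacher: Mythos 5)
Your proof is correct and follows essentially the same route as the paper's: use the bilinear expansion of $Q$ together with the axioms to pin down $Q$ on the reduced basis, invoke identity (\ref{E:beta-reduced}) from Lemma \ref{L:ID} to pin down $\beta$, and match both against the formula (\ref{E:Qb2}) defining $\tilde{\phi}$ on a reduced cube. The only cosmetic difference is that the paper phrases the argument as a uniqueness statement — ``$(Q,\beta)$ is uniquely determined by $f$ and $b$'' — whereas you verify the equality $\tilde\phi(v)=(Q,\beta)$ directly; the underlying computation is the same.
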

\vskip 5pt

 \begin{proof}
 We need to show that $Q$ and $\beta$ is uniquely determined by $f = -Q(e_1)$ and $b =- N_{Q,\beta}(e_1)$.
 Since 
 \[  Q(e_2) = Q(\beta(e_1)) = f^{\#} \quad \text{and} \quad b_Q(e_1, e_2) = b_Q(e_1, \beta(e_1)) = N(e_1) = -b, \]
 we see that $Q$ is uniquely determined. Then $\beta(xe_1 + ye_2)$ is uniquely determined by
  (\ref{E:beta-reduced}) in Lemma \ref{L:ID}.
  Finally, observe that
  \[  \Delta_E(v)  =  \Delta_{Q,\beta}(e_1) =  b^2 + 4 N_E(f). \]
 \end{proof} 
 \vskip 5pt

 \subsection{\bf Good bases.}
 We call a basis of $C$ a {\em good basis} if it is in the $\Aut_E(C)^0\cong \GL_2(E)^0$-orbit of a reduced basis. 
By Proposition \ref{P:reduced-basis}(iv), this notion is independent  of the choice of the reduced basis.
Similarly, since the action of $S_E$ preserves the set of reduced cubes, the notion of good bases does not depend on whether one uses $\Aut_E(C)^0$ or $\Aut_F(E,C)^0 \cong \GL_2(E)^0 \rtimes S_E$. 

\vskip 5pt

As a consequence of the proposition, we have:
\vskip 5pt

\begin{cor}  \label{C:reduced}
(i)  The map $\tilde{\phi}$ gives a bijection between the set of reduced (nondegenerate) cubes and the set of  $(Q, \beta)$ on $E^2$ so that the standard basis $\{e_1, e_2\}$ is reduced. 
 \vskip 5pt
 
 (ii) The image of $\tilde{\phi}$ consists precisely of those $(Q,\beta)$ such that the standard basis $\{e_1, e_2\}$ of $C = E^2$ is a good basis for $(Q,\beta)$. 
\end{cor}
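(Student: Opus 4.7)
For part (i), the idea is to combine the injectivity of $\tilde{\phi}$ established in Proposition \ref{P:bij}(i) with two direct verifications. On the one hand, the explicit formula \eqref{E:Qb2}---and in particular equation \eqref{E:reduced-form}---shows that for any reduced cube $(1,0,f,b)$ the image $\tilde{\phi}(1,0,f,b) = (Q,\beta)$ satisfies $\beta(e_1) = e_2$, so that $\{e_1, e_2\}$ is a reduced basis. On the other hand, for surjectivity onto the target set, Proposition \ref{P:reduced-basis2} furnishes an explicit reduced preimage: any $(Q,\beta)$ on $E^2$ for which $\{e_1,e_2\}$ is a reduced basis equals $\tilde{\phi}\bigl(1,\,0,\,-Q(e_1),\,-N_{Q,\beta}(e_1)\bigr)$. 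Taken together, these give the asserted bijection.

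For part (ii), the key tools are the $\GL_2(E)^0$-equivariance $\tilde{\phi}(g\cdot v) = {^t}g^{-1}\cdot \tilde{\phi}(v)$ of Proposition \ref{P:bij}(ii), together with the fact, recorded in Section \ref{S:cube}, that any nondegenerate cube can be brought to reduced form by the action of $\GL_2(E)^0 = M_E(F)$. For the forward inclusion, write $(Q,\beta) = \tilde{\phi}(v)$ and choose $g \in \GL_2(E)^0$ so that $g\cdot v$ is reduced. Then $\tilde{\phi}(g\cdot v) = {^t}g^{-1}\cdot (Q,\beta)$ has standard basis reduced by (i). Since ${^t}g : (C,\,{^t}g^{-1}\cdot (Q,\beta)) \to (C, Q,\beta)$ is an isomorphism of twisted composition algebras---which necessarily carries reduced bases to reduced bases---the pair $\{{^t}g(e_1),{^t}g(e_2)\}$ is a reduced basis of $(C,Q,\beta)$. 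Since $\{e_1, e_2\}$ is the image of this reduced basis under $({^t}g)^{-1} \in \GL_2(E)^0$, it is good by definition.

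For the reverse inclusion, suppose $\{e_1,e_2\}$ is a good basis for $(Q,\beta)$, witnessed by $h \in \GL_2(E)^0$ and a reduced basis $\{u,\beta(u)\}$ with $e_1 = h(u)$ and $e_2 = h(\beta(u))$. Then $h: (C,Q,\beta) \to (C, h\cdot(Q,\beta))$ sends this reduced basis to $\{e_1,e_2\}$, so $\{e_1,e_2\}$ is a reduced basis for $h\cdot(Q,\beta)$. By part (i), $h\cdot(Q,\beta) = \tilde{\phi}(v^{\mathrm{red}})$ for some reduced cube $v^{\mathrm{red}}$; applying the equivariance formula to the element ${^t}h \in \GL_2(E)^0$ then gives
$\tilde{\phi}({^t}h\cdot v^{\mathrm{red}}) = h^{-1}\cdot \tilde{\phi}(v^{\mathrm{red}}) = h^{-1}\cdot h \cdot (Q,\beta) = (Q,\beta),$
placing $(Q,\beta)$ in the image of $\tilde{\phi}$. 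The only delicate point in this argument is correctly tracking the transpose-inverse twist through the equivariance and the corresponding identification of reduced bases of $(C,Q,\beta)$ with those of $g\cdot(Q,\beta)$; once this bookkeeping is in place, both inclusions in (ii) are formal consequences of (i) and Proposition \ref{P:bij}(ii).
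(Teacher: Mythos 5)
Your proof of part (i) is correct and matches the paper's (implicit) argument: injectivity is Proposition \ref{P:bij}(i), and the two explicit formulas \eqref{E:reduced-form} and Proposition \ref{P:reduced-basis2} give the two inclusions. Your reverse inclusion in part (ii) is also correct and is the natural argument: transport by $h\in\GL_2(E)^0$, invoke (i), and untwist through $\tilde{\phi}$.

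There is a genuine issue, however, with the forward inclusion in (ii). You invoke, citing Section \ref{S:cube}, ``the fact that any nondegenerate cube can be brought to reduced form by the action of $\GL_2(E)^0=M_E(F)$.'' Section \ref{S:cube} establishes this only for the split algebra $E=F^3$; the paper never records the corresponding statement for a general \'etale cubic $E$, and that statement is in fact \emph{logically equivalent} to the very inclusion you are trying to prove (given (i) and the equivariance). So as written, the forward direction is circular unless one independently supplies the reduction claim over $F$ for general $E$. The gap can be filled, but it requires an argument: for example, one can carry out the row-column reductions using elementary matrices in $\SL_2(E)$ and handle the case $a=b=0$ by first applying a suitable unipotent to make $a\neq 0$.

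A cleaner route, which I believe is what the authors intend (it is the twin of the computation in the proof of Proposition \ref{P:reduced-basis}(iv)), avoids the reduction claim altogether. Since $\tilde{\phi}(v)$ is a twisted composition algebra (Proposition \ref{P:bij}(iii)), Proposition \ref{P:reduced-basis}(iii) furnishes a reduced basis $\{u,\beta(u)\}$ with $u=(x,y)\in E^2$. From the explicit formula \eqref{E:Qb} one computes, with the help of the identity \eqref{E:curious}, that the determinant of the transition matrix from $\{e_1,e_2\}$ to $\{u,\beta(u)\}$ equals
\[
aN_E(x)+bN_E(y)+Tr_{E/F}(eyx^{\#})+Tr_{E/F}(fxy^{\#})\in F,
\]
and it is nonzero because $\{u,\beta(u)\}$ is a basis. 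Hence the transition matrix lies in $\GL_2(E)^0$, so $\{e_1,e_2\}$ is good. This replaces the appeal to cube-reduction over $F$ by a one-line determinant check, and it is in fact what proves the cube-reduction claim for general $E$ rather than relying on it.
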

\vskip 5pt

The definition we have given for a good basis $\{e_1, e_2\}$ may not seem very satisfactory. It would have been  more satisfactory if one defines a good basis for $(C, Q,\beta)$ using purely the forms $(Q,\beta)$  rather than using the action of $\Aut_E(C)^0$. Indeed, it will not be easy to check that a given basis is good by our definition. 
However, by Corollary \ref{C:reduced}, one knows {\em a posteriori} that a basis $\{e_1, e_2\}$ is good for $(C,Q,\beta)$ if and only if $\beta(xe_1 +ye_2)$ has the form given in (\ref{E:Qb}) with $a, b \in F$.  
We would have taken this as a definition, but it would have seemed completely unmotivated without the results of this section!

\vskip 5pt
 
\subsection{\bf A commutative diagram}
 As a summary of the above discussion,  we have the following refinement and explication of Theorem \ref{T:main-orbit}:
 \vskip 5pt
 
 \begin{thm}  \label{T:main-orbit2}
(i)  The bijective map $\tilde{\phi}$ descends to a  give  a commutative diagram:
\[
\begin{CD}
 V_E(F)^0 = \{  c \in V_E(F): \Delta_E(c)  \ne 0\}  @>>> \text{$\tilde{M}_E(F)$-orbits on $V_E(F)^0$}   \\
@V\tilde{\phi}VV    @VV{\phi}V \\
 \{ \text{pairs $(Q,\beta)$ on $E^2$: standard basis is good}  \}  @>>>  \{ \text{$\GL_2(E)^0 \rtimes S_E(F)$-orbits of $(Q,\beta)$} \}   \\
 @VVV   @VVV \\
 \{ \text{$F$-isom. classes of pairs $(C,b)$} \}  @>>> \{ \text{$F$-isom. classes of $C$} \} 
  \end{CD} \]
 where all vertical arrows are $\GL_2(E)^0 \rtimes S_E(F)$-equivariant bijections and, in the last row, $C$ denotes an $E$-twisted composition algebra and $b$ denotes a good basis of $C$. Moreover the action of $\GL_2(E)^0 \rtimes S_E(F)$ on a pair $(C,\{e_1, e_2\})$ is given as follows: 
 $g \in \GL_2(E)^0$  sends it to  $(C,  \{ e_1', e_2'\})$, where
 \[  \left( \begin{array}{cc}
 e_1' \\
 e_2' \end{array} \right)  = g \cdot 
 \left( \begin{array}{cc}
 e_1 \\
 e_2 \end{array} \right),
 \]
 whereas $\sigma \in S_E$ sends it to $(E \otimes_{E,\sigma} C,  \{ e_1, e_2\})$. 
 \vskip 5pt

 \vskip 5pt

 \noindent (ii) The bijection $\phi$ agrees with the one given in  Theorem  \ref{T:main-orbit}.
 
  \end{thm}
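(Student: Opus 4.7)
The plan is to assemble the commutative diagram of part (i) from the top down, then deduce part (ii) from the uniqueness built into the Galois cohomology twisting argument. Everything needed is essentially in place; the task is to organize it.

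For the top vertical arrow, the content of Proposition \ref{P:bij}(i) and Corollary \ref{C:reduced}(ii) already gives that $\tilde{\phi}$ is a bijection from $V_E(F)^0$ onto the set of pairs $(Q,\beta)$ on $E^2$ for which the standard basis is good. The equivariance established in Proposition \ref{P:bij}(ii) is equivariance up to the outer automorphism $(g,\sigma) \mapsto ({}^t g^{-1},\sigma)$ of $\GL_2(E)^0 \rtimes S_E$, but since passing to orbits quotients out by the whole group, this outer twist disappears at the level of orbit sets. Hence $\tilde{\phi}$ descends to the top horizontal bijection of the diagram. That nondegeneracy is preserved on both sides is immediate from $\Delta_E(v) = \Delta_{C,Q,\beta}(e_1)$ (Proposition \ref{P:reduced-basis2}).

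For the passage from the middle to the bottom row, a pair $(Q,\beta)$ on $E^2$ with standard basis good is by definition the data of an $E$-twisted composition algebra $C = (E^2, Q,\beta)$ together with the good basis $\{e_1,e_2\}$. The action of $\GL_2(E)^0 \rtimes S_E(F)$ on such pairs, as described in \S \ref{SS:isom}, translates to the change-of-basis action on pairs $(C,b)$ stated in the theorem (with $\sigma \in S_E$ additionally twisting the $E$-action on $C$). Forgetting the basis recovers the $F$-isomorphism class of $C$ alone; surjectivity of this forgetful map -- that is, the existence of a good basis on every $(C,Q,\beta)$ -- is exactly Proposition \ref{P:reduced-basis}(iii), since reduced bases are good by our definition. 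This gives the bottom row and shows all vertical arrows are bijective and equivariant.

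For part (ii), the plan is to show that both the cohomological bijection of Theorem \ref{T:main-orbit} and the explicit map $\phi$ are uniquely determined by the same two pieces of data: they both send the base point orbit (of $v_{0,E}$) to the base point composition algebra $C_E$, and they both intertwine the stabilizer of $v_{0,E}$ with $\Aut_F(C_E)$ via the identification of these groups as the \emph{same} subgroup of $\GL_2(E)^0 \rtimes S_E$ (proved in \S \ref{SS:stab-E}). The first fact is visible from formula \eqref{E:Qb2} with $(a,e,f,b) = (1,0,0,-1)$, while the second is built into the commutative diagram \eqref{E:comm}. Since every generic orbit is represented by $g \cdot v_{0,E}$ for some $g \in M_E(\bar F)$, and the cohomological bijection sends this orbit to the composition algebra obtained by twisting $C_E$ by the cocycle $a_\sigma = g^{-1}\sigma(g)$, agreement reduces to checking that $\tilde{\phi}(g \cdot v_{0,E})$ is $(\bar F$-isomorphic via $g$ to) the twist of $C_E$ by the same cocycle -- which is precisely the content of the equivariance in Proposition \ref{P:bij}(ii) applied over $\bar F$.

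The main obstacle will be the bookkeeping around the outer automorphism $g \mapsto {}^t g^{-1}$: one must verify that this twist, which intervenes in $\tilde\phi$'s equivariance, is absorbed correctly when translating between the orbit description and the cocycle description, so that no spurious inversion appears in comparing $\phi$ with the cohomological bijection. Once this is checked, the uniqueness of a base-point-preserving, stabilizer-equivariant bijection between a homogeneous space of orbits and the associated $H^1$ forces the two maps to coincide.
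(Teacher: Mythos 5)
Your treatment of part (i) matches the paper's: the same references (Proposition \ref{P:bij}(i),(ii), Corollary \ref{C:reduced}, Proposition \ref{P:reduced-basis}(iii),(iv)) are invoked to establish that $\tilde\phi$ is bijective, equivariant up to the outer automorphism $(g,\sigma)\mapsto({}^tg^{-1},\sigma)$, and therefore descends to a bijection $\phi$ on orbits. Your observation that the outer twist is harmless at the level of orbit \emph{sets} (since it is an automorphism of the group) is correct and is what makes the top square commute.

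For part (ii), however, there is a real gap. You correctly isolate the crucial difficulty --- that $\tilde\phi$ intertwines the actions only up to the outer automorphism $g\mapsto {}^tg^{-1}$, and this twist propagates to the identification of cocycles --- but you then stop at ``once this is checked, uniqueness forces the two maps to coincide'' without actually performing the check. The check is not automatic and is the substance of the paper's argument. The resolution is this: the automorphism $g\mapsto {}^tg^{-1}$ preserves the stabilizer $\Stab_{\GL_2(E)^0}(v_{0,E}) = \Aut_E(C_E,Q_0,\beta_0)$, and restricted to that subgroup it is \emph{inner}, namely conjugation by the Weyl element
\[
  w=\begin{pmatrix}0&1\\1&0\end{pmatrix}\in \Aut_E(C_E,Q_0,\beta_0)(F).
\]
Indeed, for $g$ in the stabilizer, $g$ is either diagonal with $\alpha\in E^1$, or $w$ itself, and one checks directly that ${}^tg^{-1}=wgw^{-1}$ in each case. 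Because $w$ is $F$-rational, conjugation by $w$ induces the identity on the pointed set $H^1(F,\Aut_E(C_E,Q_0,\beta_0))$; hence the extra twist in the equivariance of $\tilde\phi$ induces the trivial map on cohomology, and the cocycle associated to $g\cdot v_{0,E}$ via $\tilde\phi$ agrees with the one in Theorem \ref{T:main-orbit}. Without this inner-automorphism observation, your ``uniqueness'' argument does not close, because $\tilde\phi$ is not literally equivariant for the stabilizer action but only for the twisted one, and one could in principle end up with $\phi$ composed with the nontrivial involution of $H^1$ induced by an \emph{outer} automorphism. So you need to explicitly supply the computation that $g\mapsto {}^tg^{-1}$ is inner on $\Aut_E(C_E)$ via an $F$-point.
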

\vskip 5pt

\begin{proof}
(i) Our discussion above already shows that $\tilde{\phi}$ is bijective and descends to give the map $\phi$. It remains to show that the induced map $\phi$ is bijective.
The surjectivity of $\phi$ follows from Proposition \ref{P:reduced-basis}(iii) and (iv),  and Corollary \ref{C:reduced}(i).
The injectivity of $\phi$ follows from Proposition \ref{P:bij}(i) and (ii). We leave the bijection and the equivariance of the lower half of the diagram to the reader.
\vskip 5pt

\noindent (ii) The map $\tilde{\phi}$ sends the distinguished cube $v_{E,0} =(1,0,0,-1)$ to the pair $(Q_0, \beta_0)$ on $E^2$, which defines the algebra $C_E$. Moreover, $\tilde{\phi}$ 
is equivariant with respect to the automorphism  $g \mapsto {^t}g^{-1}$  of $\GL_2(E)$, which preserves the subgroup  ${\rm Stab}_{\GL_2(E)^0}(v_{E,0}) = \Aut_E(Q_0,\beta_0) \subset \GL_2(E)^0$.  Finally, since $\tilde{\phi}$ is algebraic, it is  Galois-equivariant with respect to base field extension.  All these imply that we have a commutative diagram
\[  \begin{CD}
\{\text{$\GL_2(E)^0$-orbits on $V_E(F)^0$} \}@>>> H^1(F,  {\rm Stab}_{\GL_2(E)^0}(v_{E,0}))  \\
@VV{\phi}V    @VV{g \mapsto {^t}g^{-1}}V \\
\{\text{$E$-isom. classes of twisted composition algebras} \}@>>> H^1(F,  \Aut_E(C_E))  \end{CD} \]
 Since the map $g \mapsto {^t}g^{-1}$ of  ${\rm Stab}_{\GL_2(E)^0}(v_{E,0}) = \Aut_E(Q_0,\beta_0)$ is given by conjugation by the element $w \in  \Aut_E(Q_0,\beta_0)(F)$, we see that the induced map on $H^1$ is trivial. Hence $\phi$ agrees with the bijection given in Theorem \ref{T:main-orbit} by Galois cohomological argument. 
 \end{proof}

\subsection{\bf An example.}
 As an example, 
 assume that $K=F(\sqrt{\Delta})$ and consider the composition algebra given by the example in  \S \ref{SS:example2}. 
 (This  is the distinguished point in the fiber of $([F^3], [K])$.) Then $v=(\sqrt{\Delta}, \sqrt{\Delta}, \sqrt{\Delta})$ and 
 $\beta(v)=(\Delta,\Delta, \Delta)$ is a reduced basis. The corresponding reduced cube is 

\begin{picture}(100,130)(-130,10)

\put(20,18){$0$}
\put(25,20){\line(1,0){50}}
\put(76,18){$\Delta$}

\put(40,48){$\Delta$}
\put(45,50){\line(1,0){50}}
\put(96,48){$0$}

\put(22,26){\line(0,1){46}}
\put(78,26){\line(0,1){46}}

\put(42,56){\line(0,1){46}}
\put(98,56){\line(0,1){46}}

\put(25,25){\line(2,3){14}}
\put(82,25){\line(2,3){14}}

\put(18,74){$1$} 
\put(25,76){\line(1,0){50}}
\put(76,74){0}

\put(40,104){0}
\put(45,106){\line(1,0){50}}
\put(96,104){$\Delta$}

\put(25,81){\line(2,3){14}}
\put(82,81){\line(2,3){14}}

\end{picture}
 
 \noindent 
\vskip 15pt

 \subsection{\bf Relation with Tits' construction.}   
   If $f\in E^{\times}$, we can relate the construction of $\tilde{\phi}$  attached to the reduced cube $(1,0,-f,b)$ to Proposition \ref{P:kmrt-quad}.   Identify $E\oplus E$ with  $E\otimes K$  using the $E$-linear isomorphism given by 
    \[ 
   (x,y)\mapsto  x\otimes 1 + \frac{y}{f} \otimes  \frac{b- \sqrt{\Delta}}{2} = x + y \frac{b- \sqrt{\Delta}}{2f}
   \] 
 where, in the last expression, we omitted tensor product signs for readability. Then $Q$ can be written as    
 \[ 
   Q( x+y \frac{b- \sqrt{\Delta}}{2f})= -f  \cdot N_{E\otimes K/E} ( x+ y \frac{b- \sqrt{\Delta}}{2f})
   \] 
 and $\beta$ as 
  \[ 
  \beta ( x+ y \frac{b- \sqrt{\Delta}}{2f})= -\frac{2}{b+\sqrt{\Delta}}\cdot f^{\#} \cdot ( x+ y \frac{b+ \sqrt{\Delta}}{2f})^{\#}. 
  \] 
  Indeed, if $E=F^3$, these formulae are exactly the same as those in subsection \ref{SS:integral} 
.  Let 
  \[  e=-f \quad \text{and}  \quad \nu=-\frac{b+\sqrt{\Delta}}{2}.  \]
  Using $e^{-1} \cdot  \bar\nu =\nu^{-1} \cdot  e^{\#}$ (since $N_{E/F}(e)=N_{K/F}(\nu)$) this composition algebra is the algebra attached to the pair $(e,\nu)$, as  in Proposition \ref{P:kmrt-quad}. 
   Conversely, a composition algebra 
  given by a pair $(e,\nu)$, as  in Proposition \ref{P:kmrt-quad}, arises from the cube $(1,0,-e, b)$ where $b=-Tr_{K/F}(\nu)$.

 \vskip 10pt
 



\section{\bf Exceptional Hilbert 90} \label{S:H90}

Assume that $E$ is an \'etale cubic $F$-algebra  field with corresponding \'etale quadratic discriminant algebra $K_E$ and let $K$ be an \'etale quadratic $F$-algebra.  
Recall that 
\[  T_{E,K} = \{ x \in E\otimes_F K: N_{E/F}(x) = 1 = N_{K/F}(x) \}. \]
Suppose,
  for example, that  $[K_E]= [K]=1$, so $E$ is a Galois extension, and $T_{E,K}$ is the group of norm one 
elements in $E^{\times}$. Let $\sigma$ be a generator of the Galois group $G_{E/F}$.  Then Hilbert's Theorem 90 states that  the map 
\[ 
x\mapsto \sigma(x)/\sigma^2(x) 
\] 
induces an isomorphism of $E^{\times}/F^{\times}$ and $T_{E,K}(F)$. Our goal in this section is to generalize this statement to all 
tori $T_{E,K}$, thus obtaining an exceptional Hilbert's Theorem 90. As an application, we give an alternative description of $H^1(F, T_{E,K})$. 
\vskip 10pt

\subsection{\bf The torus $T_{E,K}$.}
We first describe the torus $T_{E,K}$ by Galois descent. 
Over $\overline{F}$, we may identify
\[  T_{E,K}(\overline{F}) = \{ (\underline{a}, \underline{b}) \in   \overline{F}^3 \otimes  \overline{F}^2:
 \text{$a_ib_i=1$ for all $i$ and $a_1a_2a_3 = 1$} \}. \]
The $F$-structure is given by the twist of the Galois action on coordinates by the cocycle
  \[  \rho_E  \times  \rho_K : \Gal(\overline{F}/F) \longrightarrow \Aut(\overline{F}^3) \times \Aut(\overline{F}^2) \cong S_3 \times \Z/2\Z,\]
 where $S_3$ (respectively  $\Z/2\Z$) acts on $\Z^3$ (respectively $\Z^2$) by permuting the coordinates.

\vskip 5pt

We may describe $T_{E,K}$ using its  cocharacter lattice $X$. We have:
 \[  X  =  \{  (\underline{a}, -\underline{a}) \in \Z^3 \otimes \Z^2:  a_1+a_2+a_3 = 0 \}, \]
 equipped with the Galois action given by
 \[  \rho_E \otimes \rho_K : \Gal(\overline{F}/F) \longrightarrow S_3 \times \Z/2\Z. \]
 \vskip 5pt
 
\subsection{\bf The torus $T'_{E,K}$.}
Now we introduce another torus $T'_{E,K}$ over $F$.
 Let $K_J$ be the \'etale quadratic  $F$-algebra such that $[K_J]\cdot [K]\cdot [K_E]=1$ in $H^1(F, \Z/2\Z)$.  We define the tori
 \[  \tilde{T}'_{E,K} = \{ x \in E \otimes_F K_J:   N_{E \otimes K_J / E}(x) \in F^{\times} \}, \]
 and 
 \[   T'_{E,K}  = \tilde{T}'_{E,K}  / K_J^{\times} \]  
 where the last quotient is taken in the sense of algebraic groups.  If $J=B^{\tau}$ where $B$ is a degree 3 central simple $K_J$-algebra with an involution $\tau$ of the second kind, 
 and $E \rightarrow J$ is an $F$-embedding or, equivalently, $E\otimes_F K_J \rightarrow B$ is a $K_J$-embedding such that $\tau$ pulls back to the nontrivial element of 
 $\Aut(E\otimes_F K_J/ E)$, then $T'_{E,K}$ acts naturally as a group of automorphisms of the embedding $E\rightarrow J$. 
\vskip 5pt

We may again describe these tori by Galois descent.
 Over $\overline{F}$,  we may identify
 \[  \tilde{T}'_{E,K}(\overline{F}) = \{ (\underline{a},  \underline{b}) \in  (\overline{F}^{\times})^3 \otimes  (\overline{F}^{\times})^2:  a_1b_1 = a_2b_2 = a_3b_3 \},  \] 
 and $T'_{E,K}(\overline{F})$ is the quotient of this by the subgroup consisting of the elements $(a \cdot \underline{1}, b \cdot \underline{1} )$. 
 The action of $\Gal(\overline{F}/F)$ which gives the $F$-structure of $\tilde{T}'_{E,K}$ is then described as follows. Let $\rho_E:  \Gal(\overline{F}/F) \longrightarrow S_3$ be the  cocycle  associated to $E$, so that 
 ${\rm sign} \circ  \rho_E: \Gal(\overline{F}/F) \longrightarrow \Z/2\Z$ is the homomorphism associated to $K_E$. On the other hand, we let $\rho_K$ be the homomorphism  associated to $K$, so that
 \[   ({\rm sign} \circ \rho_E) \cdot \rho_K  : \Gal(\overline{F}/F) \longrightarrow \Z/2\Z \]
 is the homomorphism associated to $K_J$.
  Now the action of $\Gal(\overline{F}/F)$ on 
 $ \overline{F}^3 \otimes  \overline{F}^2$ is the twist of the action on coordinates by the cocycle
 \[  \rho_E  \times  ({\rm sign} \circ \rho_E) \cdot \rho_K  : \Gal(\overline{F}/F) \longrightarrow S_3 \times \Z/2\Z. \]
  
 \vskip 5pt
 As before, we may describe the tori $\tilde{T}'_{E,K}$ and $T'_{E,K}$ by their cocharacter lattice.
 The cocharacter lattice $\tilde{Y}$ of $\tilde{T}'_{E,K}$ is given by
 \[  \tilde{Y}  = \{  (\underline{a}, \underline{b}) \in \Z^3 \otimes \Z^2:  a_1+ b_1 = a_2 + b_2 = a_3 + b_3\},  \]
 equipped with the Galois action given by
  \[  \rho_E  \times ({\rm sign} \circ \rho_E) \cdot \rho_K :  \Gal(\overline{F}/F) \longrightarrow S_3 \times \Z/2\Z. \]
 This contains the Galois-stable sublattice 
\[  Z  = (1,1,1) \otimes \Z^2    \]
so that $Y = \tilde{Y}/Z$ is the cocharacter lattice of $T'_{E,K}$.

 \vskip 5pt

 \subsection{\bf A homomorphism.}
 We are going to construct a morphism of tori from $\tilde{T}'_{E,K}$ to $T_{E,K}$. We shall first define this morphism over $\overline{F}$ and then shows that it descends to $F$. 
 \vskip 5pt
 
   Now we may define a morphism over $\overline{F}$:
  \[  f: \tilde{T}'_{E,K}(\overline{F})  \longrightarrow T_{E,K}(\overline{F}) \]
  by
  \[   f: \left( \begin{array}{ccc}
 a_1 & a_2 & a_3 \\
 b_1 & b_2 & b_3 \end{array} \right)
  \mapsto 
   \left( \begin{array}{ccc}
  a_2/a_3 & a_3/a_1 & a_1/a_2 \\
  b_2/b_3 & b_3/b_1 & b_1/b_2 \end{array} \right) \]
  It is easy to see that this defines an $\overline{F}$-isomorphism of tori 
  \[  f:  T'_{E,K}(\overline{F}) \cong T_{E,K}(\overline{F}). \]
  Moreover, if $\sigma \in S_e(\overline{F})  =S_3$ is the cyclic permutation
  \[  (a_1, a_2, a_3) \mapsto (a_2, a_3, a_1), \]
  then the map $f$ is given by
  \[  f(x)  = \sigma(x) / \sigma^2(x). \]
  \vskip 5pt
  
   Now the morphism $f$ induces a map
 \[  f_*:   \tilde{Y} \longrightarrow   X \]
 given by
 \[   \left( \begin{array}{ccc}
 a_1 & a_2 & a_3 \\
 b_1 & b_2 & b_3 \end{array} \right)
  \mapsto 
   \left( \begin{array}{ccc}
  a_2-a_3 & a_3 -a_1 & a_1 - a_2 \\
  b_2- b_3 & b_3- b_1 & b_1 - b_2 \end{array} \right). \]
  This induces an isomorphism of $\Z$-modules $Y \cong X$. 
\vskip 5pt

  \subsection{\bf Exceptional Hilbert 90.} 
  Now the main result of this section is:
 \vskip 5pt
 
 \begin{thm}
 The isomorphism $f:  T'_{E,K} \times_F \overline{F}  \longrightarrow T_{E,K} \times_F \overline{F}$
 is defined over $F$, and thus gives 
 an isomorphism of tori
 \[  T'_{E,K} \longrightarrow T_{E,K} \]
 given by
 \[  x \mapsto \sigma(x)/  \sigma^2(x). \]
 \end{thm}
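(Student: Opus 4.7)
The plan is to verify that $f$ is Galois-equivariant for the twisted $F$-structures on both sides (so it descends to $F$), and then to match the descended morphism with $x \mapsto \sigma(x)/\sigma^2(x)$.

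First I would observe that $f$ factors through the quotient $T'_{E,K}$: scaling $(\underline a, \underline b)$ by $(c \cdot \underline 1, c' \cdot \underline 1) \in K_J^\times \subset \tilde T'_{E,K}(\bar F)$ leaves every ratio $a_{i+1}/a_{i-1}$ and $b_{i+1}/b_{i-1}$ unchanged. Combined with the cocharacter computation in the paragraph just before the theorem (which shows $f_\ast$ kills $Z = (1,1,1)\otimes \Z^2$ and induces a $\Z$-module isomorphism $Y \cong X$), this gives a $\bar F$-isomorphism $T'_{E,K} \times_F \bar F \cong T_{E,K} \times_F \bar F$.

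The main step is to check Galois equivariance of $f_\ast : Y \to X$, where the two cocharacter lattices are twisted by cocycles differing only in the $\Z/2\Z$-component by $\sign \circ \rho_E$. It suffices to check on generators of $S_3 \times \Z/2\Z$. On a $3$-cycle one has $\sign = 1$, so both cocycles agree and equivariance is the tautology that $a_i \mapsto a_{i+1} - a_{i-1}$ commutes with cyclic index shifts. On the generator $(1,-1)$, both actions acquire the same $a \leftrightarrow b$ swap, and equivariance is immediate from the visible $a \leftrightarrow b$ symmetry of $f_\ast$ in its two arguments. The key case is a transposition $s$ with trivial $\rho_K$: here the action on $Y$ acquires an extra $a \leftrightarrow b$ swap from $\sign(s) = -1$ that is absent on the $X$-side, so one must verify the identity $(s \cdot b)_{i+1} - (s \cdot b)_{i-1} = (s \cdot c)_i$ with $c_i = a_{i+1} - a_{i-1}$. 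Using the defining relation $b_j = k - a_j$ on $\tilde Y$ (with $k$ the common value of $a_j + b_j$), this reduces to the combinatorial identity $\{s(i-1), s(i)-1\} = \{s(i+1), s(i)+1\}$ as multisets in $\Z/3\Z$, which holds by a direct case check on transpositions.

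Finally, to identify the descended morphism with $x \mapsto \sigma(x)/\sigma^2(x)$, fix a generator $\sigma$ of the cyclic subgroup of $S_E(\bar F) = S_3$ acting on coordinates by $\sigma(x)_i = x_{i+1}$. Then a one-line computation gives $\sigma(\underline a)_i / \sigma^2(\underline a)_i = a_{i+1}/a_{i-1}$, recovering the $a$-part of $f$, and analogously for $\underline b$. The principal obstacle in this plan is the combinatorial verification in the transposition case: one must track carefully how the $\sign \circ \rho_E$ discrepancy between the two twisted Galois actions is absorbed by the defining relation $a_j + b_j = \mathrm{const}$ of $\tilde T'_{E,K}$ together with the $a \leftrightarrow b$ symmetry of $f$. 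The rest of the argument is formal.
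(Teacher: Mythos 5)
Your proposal is correct and follows essentially the same approach as the paper: reduce to Galois equivariance of $f_*$ on cocharacter lattices, where the sign discrepancy of $f_*$ on transpositions (here traced through your combinatorial identity $\{s(i-1),s(i)-1\}=\{s(i+1),s(i)+1\}$ in $\Z/3\Z$ together with the relation $a_j+b_j=\text{const}$ on $\tilde Y$) is exactly cancelled by the $\sign\circ\rho_E$-twist built into the Galois action on $Y$. The paper packages this as twisting the $S_3\times\Z/2\Z$-module structure on the domain by the automorphism $(g,h)\mapsto(g,\sign(g)h)$ and declares the resulting equivariance ``easily seen''; your generator-by-generator check is precisely the content of that step, spelled out.
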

 \vskip 5pt

 \begin{proof}
 It remains to prove that $f$ is defined over $F$. For this, we may work at the level of cocharacter lattices and we need to show that $f_*$ is Galois-equivariant.  For this, regard $\Z^3 \otimes \Z^2$ as a $S_3 \times \Z/2\Z$-module with the permutation of the coordinates in $\Z^3$ and $\Z^2$. Then observe that $f_*$ is not equivariant with respect to $S_3 \times \Z/2\Z$. On the other hand, we have the automorphism of $S_3 \times \Z/2\Z$ given by
  \[  (g, h) \mapsto (g, {\rm sign}(g) \cdot h) \]
  If we twist  the $S_3 \times \Z/2\Z$-module structure on the domain of $f_*$ by this automorphism, then $f_*$ is easily seen to be equivariant.  Together with our description of the $\Gal(\overline{F}/F)$-actions on the domain and codomain of $f_*$, the desired $\Gal(\overline{F}/F)$-equivariance follows. 
 \end{proof}
 \vskip 10pt

 \subsection{\bf Cohomology of $T_{E,K}$.} 
 As an application of the exceptional Hilbert 90, we may give an alternative description of the cohomology group $H^1(F, T_{E,K})$ which classifies twisted composition algebras with fixed invariants $(E, K)$, up to $E \otimes_F K$-linear isomorphisms. 
 
 \vskip 5pt
 
  In order to state results,  we need additional notation. For every quadratic extension $K_J$ of $F$,  let $\Res^1_{K_J/F} \mathbb{G}_m$ be the 1-dimensional torus defined by the short exact sequence of algebraic tori:
  \[  \begin{CD}
  1 @>>> \Res^1_{K_J/F} \mathbb G_m @>>> \Res_{K_J/F} \mathbb G_m @>>> \mathbb{G}_m @>>> 1.\end{CD} \]
 By   the classical Hilbert Theorem 90,   the associated long exact sequence gives the exact sequence: 
 \[ 
 1\longrightarrow  H^2(F, \Res_{K_J/F}^1 \mathbb{G}_m)  \longrightarrow  H^2(K_J, \mathbb G_m) \longrightarrow H^2(F, \mathbb G_m) 
 \] 
 where the last map is the corestriction. By a theorem of Albert and Albert-Riehm-Scharlau, Theorem 3.1 in \cite{KMRT}, the kernel of the corestriction map is the set of Brauer equivalence classes of central 
 simple algebras over $K_J$ that admit an involution of the second kind, and so we can view $H^2(F, \Res_{K_J/F}^1\mathbb{G}_m)$ as the set of Brauer equivalence classes of such algebras. 
 \vskip 5pt
 
 Now we have:
 \begin{prop}  \label{P:coho-T2}
 Let $K_J$ be the \'etale quadratic algebra such that $[K_J] \cdot [K] \cdot [K_E] =1$ and set $M = E \otimes_F K_J$. 
 \vskip 5pt
 
\noindent  (i) If $K_J$ is a field, then we have an exact sequence 
\[ 
 1\longrightarrow E^{\times}/F^{\times} N_{M/E}(M^{\times}) \longrightarrow  H^1(F, T_{E,K})   \longrightarrow  H^2(F, \Res_{K_J/F}^1 \mathbb G_m) \longrightarrow H^2(E, \Res_{M/E}^1 \mathbb{G}_m)  \] 
 The image of $H^1(F, T_{E,K})$ consists of those central simple algebras $B$ over $K_J$ which contain $M$ as a $K_J$-subalgebra and which admit an involution of the second kind fixing $E$ (or equivalently, restricting to the nontrivial automorphism of $M$ over $E$). 
 \vskip 5pt

\noindent (ii) If $K_J = F^2$, then we have a simplified version of the above sequence 
\[  H^1(F, T_{E,K}) = {\rm Ker}(H^2(F, \mathbb{G}_m)  \longrightarrow H^2(E, \mathbb{G}_m)). \]
  \end{prop}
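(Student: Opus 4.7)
The plan is to deduce Proposition~\ref{P:coho-T2} from a convenient presentation of the torus $T'_{E,K}$ (whose cohomology equals that of $T_{E,K}$ by the Exceptional Hilbert 90 proved above). Specifically, I would first establish the short exact sequence of $F$-tori
\[ 1 \to \Res^1_{K_J/F}\mathbb{G}_m \to \Res^1_{M/E}\mathbb{G}_m \to T'_{E,K} \to 1, \]
where the embedding $\Res^1_{K_J/F}\mathbb{G}_m \hookrightarrow \Res^1_{M/E}\mathbb{G}_m$ reflects the identity $N_{M/E}(1 \otimes x) = N_{K_J/F}(x)$ for $x \in K_J$. The verification is a short check at $\overline{F}$-points: any $y \in \tilde{T}'_{E,K}(\overline{F})$ with $N_{M/E}(y) = c \in \overline{F}^{\times}$ factors as $c \cdot (y/c)$ with $y/c \in \Res^1_{M/E}\mathbb{G}_m(\overline{F})$, while the intersection $\Res^1_{M/E}\mathbb{G}_m \cap K_J^{\times}$ is clearly $\Res^1_{K_J/F}\mathbb{G}_m$.

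Next I would take the long exact cohomology sequence. By Shapiro's lemma applied to the $E$-torus $\Res^1_{M/E}\mathbb{G}_m$ viewed as an $F$-torus by restriction of scalars, $H^i(F, \Res^1_{M/E}\mathbb{G}_m)$ coincides with $H^i(E, \Res^1_{M/E}\mathbb{G}_m)$. Combined with classical Hilbert 90 applied to the defining sequences of the norm-one tori, this identifies
\[ H^1(F, \Res^1_{K_J/F}\mathbb{G}_m) = F^{\times}/N_{K_J/F}(K_J^{\times}) \quad \text{and} \quad H^1(F, \Res^1_{M/E}\mathbb{G}_m) = E^{\times}/N_{M/E}(M^{\times}), \]
with the connecting map between them induced by the inclusion $F^{\times} \hookrightarrow E^{\times}$. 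Passing to cokernels yields the injection $E^{\times}/F^{\times} N_{M/E}(M^{\times}) \hookrightarrow H^1(F, T_{E,K})$, and continuing the long exact sequence delivers the desired four-term sequence of (i).

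To identify the image, I would invoke the Albert--Riehm--Scharlau theorem cited in the excerpt: $H^2(F, \Res^1_{K_J/F}\mathbb{G}_m)$ parametrizes Brauer classes over $K_J$ admitting an involution of the second kind with respect to $K_J/F$, with a parallel description for $H^2(E, \Res^1_{M/E}\mathbb{G}_m)$; the induced map is (the restriction of) base change $H^2(K_J, \mathbb{G}_m) \to H^2(M, \mathbb{G}_m)$. For a degree-$3$ representative $B$ of such a class (the relevant degree, since the associated Jordan algebra has dimension $9$), triviality in $H^2(M, \mathbb{G}_m)$ means that $M$ splits $B$, equivalently $M$ embeds in $B$ as a maximal commutative $K_J$-subalgebra; one further checks that the involution of the second kind can be chosen to restrict to the nontrivial $M/E$-automorphism. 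Part (ii) is the specialization $[K_J] = 1$: then $\Res^1_{K_J/F}\mathbb{G}_m \cong \mathbb{G}_m$ and $\Res^1_{M/E}\mathbb{G}_m \cong \Res_{E/F}\mathbb{G}_m$, so both first cohomologies vanish by Hilbert 90 and the long exact sequence collapses to the stated kernel.

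The main obstacle I expect lies in the last step: translating the cohomological condition "base change to $M$ trivializes the Brauer class" into the geometric statement "$M$ embeds in $B$ as a $K_J$-subalgebra and the involution restricts correctly to $M$", especially when $M$ is not a field. The surrounding cohomological bookkeeping is routine once the short exact sequence presenting $T'_{E,K}$ is in hand.
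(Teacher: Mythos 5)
Your proposal is correct and follows essentially the same route as the paper: the authors also obtain the four-term sequence from the short exact sequence of tori $1 \to \Res^1_{K_J/F}\mathbb{G}_m \to \Res_{E/F}\Res^1_{M/E}\mathbb{G}_m \to T_{E,K} \to 1$ (which they likewise derive from the Exceptional Hilbert 90 isomorphism $T'_{E,K} \cong T_{E,K}$), and then apply the long exact cohomology sequence together with the Shapiro/Hilbert 90 identifications you cite, specializing as you do to $K_J = F^2$ for part (ii). The one point on which you are more cautious than the paper --- the identification of the image of $H^1(F,T_{E,K})$ with classes of algebras admitting the required embedding and involution --- is simply asserted by the authors in the proposition statement and not re-derived in the proof, so your flagged concern does not indicate a divergence of method.
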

 
 \vskip 5pt
 
 \begin{proof}
 (i) By the exceptional Hilbert Theorem 90, we have a short exact sequence of algebraic tori:
 \[  \begin{CD}
 1@>>> \Res^1_{K_J/F}\mathbb  G_m @>>> \Res_{E/F} \Res^1_{M/E} \mathbb G_m  @>>> T_{E,K} @>>>1. \end{CD} \]
   Now (i) follows from the associated long exact sequence, using 
 \[  H^1(F, \Res^1_{K_J/F}\mathbb  G_m )  = F^{\times}/N_{K_J/F} K_J^{\times} \quad \text{and} \quad H^1(E, \Res^1_{M/E} \mathbb G_m )  = E^{\times}/N_{M/E} M^{\times}. \]
 
\vskip 5pt

\noindent (ii) One argues as above, except that since $K_J  = F^2$, we have:
\[  \begin{CD}
 1@>>> \mathbb  G_m@>>> \Res_{E/F} \mathbb G_m @>>> T_{E,K} @>>>1. \end{CD} \]
 Thus the long exact sequence gives
 \[  \begin{CD}
 1 @>>> H^1(F, T_{E,K})  @>>> H^2(F, \mathbb{G}_m)  @>>> H^2(E, \mathbb{G}_m) \end{CD} \]
  \end{proof}
 \vskip 5pt
 
 \subsection{\bf Interpretation.}
  The above description of $H^1(F, T_{E,K})$ fits beautifully with the correspondence between $E$-twisted composition algebras and conjugacy classes of embeddings $E \hookrightarrow J$ where $J$ is a Freudenthal-Jordan algebra of dimension $9$. 
  \vskip 5pt
  
  More precisely, Proposition \ref{P:coho-T2} exhibits  $H^1(F, T_{E,K})$ as the set of isomorphism classes of triples $(B, \tau, i)$ where
  \vskip 5pt
  
  \begin{itemize}
  \item $B$ is a central simple $K_J$-algebra of degree $3$;
  \item $\tau$ is an involution of the second kind on $B$;
  \item $i:  E \longrightarrow B^{\tau}$ is an $F$-algebra embedding, or equivalently a $K_J$-algebra embedding $i: M  = E \otimes_F K_J \longrightarrow B$ such that $\tau$ pulls back to the nontrivial element of $\Aut(M/E)$.
  \end{itemize}
  \vskip 5pt
  
\noindent The map $\pi: H^1(F, T_{E,K}) \longrightarrow H^2(F, \Res_{K_J/F}^1 \mathbb G_m)$ sends
$(B, \tau, i)$ to $B$. For a fixed 
\[  [B] \in {\rm Ker}(H^2(F, \Res_{K_J/F}^1 \mathbb G_m) \longrightarrow H^2(E, \Res_{M/E}^1 \mathbb{G}_m)), \]
so that $B$ contains $M =E\otimes_F K_J$ as an $K_J$-subalgebra, 
the fiber of $\pi$ over $[B]$ is the set of $\Aut_{K_J}(B)$-conjugacy classes of pairs $(\tau, i)$.
The Skolem-Noether theorem says that any two embeddings $M \hookrightarrow B$ are conjugate, and on fixing an embedding $i: M \hookrightarrow B$, the fiber of $\pi$ over $[B]$ is then the set of  $\Aut_{K_J}(B, i)$-conjugacy classes of involutions of the second kind on $B$ which restricts to the nontrivial automorphism of $M$ over $E$.  
Therefore, the exact sequence in Proposition \ref{P:coho-T2}(i) says that the set of such $\Aut_{K_J}(B,i)$-conjugacy classes of involutions is identified with $E^{\times}/F^{\times} N_{M/E}(M^{\times})$. One has a natural map on the fiber $\pi^{-1}([B])$  sending a $\Aut_{K_J}(B,i)$-conjugacy class of involutions to its $\Aut_{K_J}(B)$-conjugacy class. This is the surjective map described in Corollary 19.31 in \cite{KMRT}.

 \vskip 5pt

On the other hand, the map sending the triple $(B, \tau, i)$ to the pair $(B,\tau)$ is the natural map
\[  H^1(F, T_{E,K}) \longrightarrow H^1(F, PGU_3^{K_J})  \]
induced by the map $T_{E,K} \hookrightarrow PU_3^{K_J} $ where $PGU_3^{K_J}$ is the identity component of the automorphism group of the Freuthendal-Jordan algebra associated to the distinguished twisted composition algebra with invariants $(E,K)$. 
  \vskip 15pt

 \section{\bf Local Fields}
 In this section, we specialize and explicate the main result in the case of local fields.
 \vskip 5pt
 
 \subsection{\bf Local Fields.}
 Let $F$ be a local field, $E$ an \'etale cubic $F$-algebra, and $K_E$ the corresponding discriminant  algebra. Let $K$ be an \'etale quadratic $F$-algebra.  
 We consider
 \[  \tilde{\Omega}_{E,K} = \{\text{generic $\tilde{M}_E$-orbits on $V_E$ with associated quadratic algebra $K$} \} \]
 and
 \[  \Omega_{E,K} = \{\text{generic $M_E$-orbits on $V_E$ with associated quadratic algebra $K$} \} \]
 We have seen that $\tilde{\Omega}_{E,K}$ has a distinguished element: this is the distinguished point of $H^1(T_{E,K})$ which is fixed by $S_E(F) \times \Z/2\Z$.
 Moreover,  by Galois cohomological arguments, 
 \[ \tilde{\Omega}_{E,K}  = H^1(F, T_{E,K}) / S_E(F) \times \Z/2\Z \quad \text{and} \quad \Omega_{E,K} = H^1(F, T_{E,K}) /\Z/2\Z  \]
  We would like to explicate the sets $\tilde{\Omega}_{E,K}$ and $\Omega_{E,K}$.  
 \vskip 5pt
 
 \subsection{Cohomology of tori.} 
 Recall that in (\ref{E:coho-T}), we have shown
 \[  H^1(F, T_{E,K})  =  (E^{\times} \times K^{\times})^0 /  {\rm Im} (L^{\times}) \]
 where $L = E \otimes_F K$,
 \[ (E^{\times} \times K^{\times})^0 = \{ (e, \nu) \in E^{\times} \times K^{\times}: N_{E/F}(e)  = N_{K/F}(\nu) \} \]
 and the map from $L^{\times}$ to $(E^{\times} \times K^{\times})^0$ is given by
 \[  a \mapsto (N_{L/E}(a), N_{L/K}(a)). \]
This description of $H^1(F, T_{E,K})$ is natural but may not be so explicit.
 When $F$ is a local field, we can further explicate this description. 
 \vskip 5pt
 
 Since the case when $E$ or $K$ is not a field is quite simple, we consider the case when $E$ and $K$ are both fields. In that case, the norm map induces an isomorphism
 \[  E^{\times}/ N_{L/E}(L^{\times}) \longrightarrow   F^{\times}/ N_{K/F}(K^{\times}) \cong \Z/2\Z, \]  
 so that any $(e ,\nu) \in  (E^{\times} \times K^{\times})^0$ has  $e =  N_{L/E}(a)$ for some $a \in L^{\times}$. Hence any element in $H^1(F, T_{E,K})$ is represented by $(1, \nu)$ for some $\nu \in K^1 = \{ \nu \in K^{\times}: N_{K/F}(\nu) = 1\}$. We thus deduce that, with $L^1 = \{  a \in L^{\times}:  N_{L/E}(a)  =1\}$,
 \[  H^1(F, T_{E,K})  = K^1/  N_{L/K}(L^1)  \cong  K^{\times}/F^{\times}N_{L/K}(L^{\times}),
 \]
where the last isomorphism is induced by the usual Hilbert Theorem 90. 
Using this last expression, we easily see that
\[  H^1(F, T_{E,K})  = \begin{cases} 
1 \text{  if $K \ne K_E$;} \\
\Z/3\Z, \text{  if $K = K_E$.} \end{cases} \]
 \vskip 5pt
 
 Exchanging the roles of $E$ and $K$ in the above argument, one also has:
  \[  H^1(F, T_{E,K})  = E^1/  N_{L/E}(L_1) \]
  where now $L_1 = \{ a \in L^{\times}: N_{L/K}(a)  =1 \}$.  If $E/F$ is Galois (and $K$ is a field), it follows by the usual Hilbert Theorem 90 that
  \[  H^1(F, T_{E,K}) = E^1/  N_{L/E}(L_1)  \cong E^{\times}/F^{\times} N_{L/E}(E^{\times}) = 1, \]
thus partially recovering the result of the last paragraph.
 \vskip 5pt
 
 Alternatively, we could use Proposition \ref{P:coho-T2} to compute $H^1(F, T_{E,K})$. If $K_J$ is a field, then the only central simple $K_J$-algebra which admits an involution of the second kind is the split algebra $M_3(K_J)$. Thus we deduce from Proposition \ref{P:coho-T2}(i) that
 \[  H^1(F, T_{E,K}) \cong  E^{\times}/F^{\times} N_{M/E}(M^{\times})  \]
 where $M = E \otimes_F K_J$. On the other hand, if $K_J$ is split, then   Proposition \ref{P:coho-T2}(ii) gives
 \[  H^1(F, T_{E,K}) \cong  {\rm Ker}(H^2(F, \mathbb{G}_m)  \longrightarrow H^2(E, \mathbb{G}_m))  \]
 which is $\Z/3\Z$ when $E$ is a field.
  \vskip 10pt

 \subsection{\bf Fibers.}
 
 With the various computations of $H^1(F, T_{E,K})$ given above, it is now not difficult to show the following proposition which determines $|\tilde{\Omega}_{E,K}|$ and $|\Omega_{E,K}|$.
  \vskip 5pt
 
 \begin{prop}
 We have
 \begin{center}
 \begin{tabular}{|c|c|c|c|c|c|}
 \hline
 $E$ & $K$ & $T_{E,K}$ & $H^1(F, T_{E,K})$ & $|\tilde{\Omega}_{E,K}|$ & $|\Omega_{E,K}|$ \\
 \hline 
 $F \times K_E$ &  $K = K_E$ &   $K^{\times}$ & $1$&  $1$ & $1$ \\
 \hline 
 $F \times K_E$, $K_E$ a field &  field$\ne K_E$  & $(K \otimes K_E)^{\times}/ K_E^{\times}$ & $ \Z/2\Z$ & $2$ & $2$ \\
 \hline 
 $F \times K_E$, $K_E$ a field & $F \times F$ & $K_E^{\times}$ & $1$ & $1$ & $1$\\
 \hline 
 $F ^3$  &  field  &  $K^{\times}/F^{\times} \times K^{\times}/F^{\times}$ & $ \Z/2\Z \times \Z/2\Z$ & $2$ 
 & $4$ \\
 \hline 
  field & $K = K_E$  &  $E^{\times}/ F^{\times}$ & $\Z/3\Z$ & $2$ & $2$  \\
 \hline 
 field & $K \ne K_E$ &   & $1$ & $1$ & $1$ \\  \hline
 \end{tabular}
 \end{center}
 
 Here, the difference in the last two columns reflects the fact that $S_E(F)$ acts trivially on $H^1(F, T_{E,K})$ except when $E = F^3$ and $K$ is a field.
 \end{prop}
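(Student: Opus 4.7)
The plan is to verify the proposition row by row by computing $H^1(F, T_{E,K})$ and then analyzing the action of $S_E(F) \times \Z/2\Z$. The main tools are the description
\[
H^1(F, T_{E,K}) \cong (E^\times \times K^\times)^0 / \mathrm{Im}(L^\times)
\]
from (\ref{E:coho-T}) with $L = E \otimes_F K$, the alternative descriptions in Proposition \ref{P:coho-T2}, and local class field theory.

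In rows 1, 3, and 6 of the table, one identifies $T_{E,K}$ with a Weil restriction of $\mathbb{G}_m$, so $H^1$ vanishes by Hilbert 90; for row 6 ($E$ a cubic field, $K \ne K_E$), this uses Proposition \ref{P:coho-T2}(i) together with the fact that over a local field, every central simple $K_J$-algebra admitting an involution of the second kind is split, collapsing the exact sequence to the computation that $E^\times / F^\times N_{M/E}(M^\times)$ is trivial via a local norm index calculation. In row 2, $T_{E,K}$ is the norm-one torus of the quartic field $K \otimes_F K_E$ over the quadratic field $K_E$, and the local norm index theorem yields $H^1(F, T_{E,K}) \cong \Z/2\Z$. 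Row 4 ($E = F^3$, $K$ field) gives $T_{E,K}$ as the kernel of the product map $(K^1)^3 \to K^1$, so $T_{E,K} \cong (K^1)^2$ and $H^1(F, T_{E,K}) \cong (F^\times / N_{K/F}K^\times)^2 \cong \Z/2\Z \times \Z/2\Z$.

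Row 5 is the most delicate. Here $E$ is a cubic field and $K = K_E$, so the relation $[K_J] \cdot [K] \cdot [K_E] = 1$ forces $K_J = F \times F$, and Proposition \ref{P:coho-T2}(ii) gives
\[
H^1(F, T_{E,K}) \cong \ker\bigl(H^2(F, \mathbb{G}_m) \longrightarrow H^2(E, \mathbb{G}_m)\bigr).
\]
Over a nonarchimedean local field, the restriction of local Brauer invariants is multiplication by $[E:F] = 3$, so this kernel is the $3$-torsion of $H^2(F, \mathbb{G}_m) \cong \Q/\Z$, namely $\Z/3\Z$; the archimedean case vanishes since $\R$ has no cubic field extension.

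Finally, we analyze the action of $S_E(F) \times \Z/2\Z$. The $\Z/2\Z$ factor acts by inversion $x \mapsto x^{-1}$, trivially on the $2$-torsion groups (rows 1--4 and 6) and by negation on the $\Z/3\Z$ of row 5 (fusing the two nontrivial classes into one orbit). The $S_E(F)$-action is automatically trivial outside row 4: $S_E(F)$ has order $1$, $2$, or $3$, and there is no nontrivial homomorphism of such a group into $\Aut(\Z/2\Z) = 1$ or $\Aut(\Z/3\Z) = \Z/2\Z$. In row 4 however, $S_3 = S_E(F)$ permutes the three coordinates of $H^1(F, T_{E,K}) \cong \{(f_1, f_2, f_3) \in (F^\times / N_{K/F}K^\times)^3 : f_1 f_2 f_3 = 1\}$, yielding two orbits: the trivial triple, and the three elements supported on two coordinates. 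This gives $|\tilde\Omega_{E,K}| = 2$ in row 4 and $|\tilde\Omega_{E,K}| = |\Omega_{E,K}|$ in the other rows. The main obstacle is the Brauer-group computation underlying row 5; the remaining cases reduce to bookkeeping with Weil restrictions, local class field theory, and the automorphism groups of small cyclic groups.
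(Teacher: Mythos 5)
Your proposal is correct and follows essentially the same route as the paper, which computes $H^1(F,T_{E,K})$ in the preceding subsection by several equivalent methods and then asserts the table ``is not difficult to show.'' You fill in the details: the easy rows via Weil restriction, row 2 via the local norm-index theorem for the quadratic extension $(K\otimes K_E)/K_E$, row 4 via the identification $T_{E,K}\cong (K^1)^2$, and rows 5--6 via the paper's alternative route through Proposition~\ref{P:coho-T2} and the local Brauer group. The analysis of the $S_E(F)\times\Z/2\Z$-action (inversion acting trivially on $2$-torsion, negating the $\Z/3\Z$; $S_E(F)$ forced to act trivially outside row 4 because no group of order $\le 3$ maps nontrivially to $\Aut(\Z/2\Z)$ or $\Aut(\Z/3\Z)$; $S_3$ permuting coordinates of $\{(f_1,f_2,f_3)\in(\Z/2\Z)^3: \sum f_i = 0\}$ in row 4) is exactly what the paper's footnote to the table indicates, and your orbit counts agree throughout.
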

 
  \vskip 5pt

 \subsection{\bf Embeddings into $J$.} 
 The main theorem says that the elements of $\Omega_{E,K}$   are in bijection with the conjugacy classes of embeddings
 \[    E \hookrightarrow J \]
 where $J$ is a 9-dimensional Freudethal-Jordan algebra associated to 
 a pair $(B, \tau)$ where $B$ is a central simple algebra over the quadratic algebra $K_J$ and $\tau$ is an involution of the second kind on $B$.  
We now describe the elements of $\Omega_{E,K}$ in terms of such embeddings. 
 
  \vskip 5pt
 
 \begin{itemize}
 \item when $F$ is p-adic and $K = K_E$ so that $K_J = F \times F$ is split, then 
 \[   (B, \tau)  = (D \times D^{op}, {\rm sw})   \]
 where $D$ is a central simple $F$-algebra of degree $3$, and $sw$ denotes the involution which switches the two factors.  Thus, there are 2 possible $J$'s in this case:  the Jordan algebra $J^+$ attached to  $M_3(F)$ or  the Jordan algebra $J^-$ attached to  a cubic division $F$-algebra (and its opposite). In either case, the set of embeddings $E \longrightarrow J$ is either empty or a single conjugacy class, and it is empty if and only if $J = J^-$ and $E$ is not a field. Thus when $K = K_E$, we have:
 \[  \tilde{\Omega}_{E,K} = \Omega_{E,K} =\begin{cases}
  \{  E \rightarrow J^+, E \rightarrow J^- \}  \text{ if $E$ is a field;} \\
 \{  E \rightarrow J^+ \}, \text{  if $E$ is not a field.} \end{cases} \]
 \vskip 5pt
 
   On the other hand, when $K_J$ is a field, then $B  = M_3(K_J)$, and
 there is a unique isomorphism class of involution of the second kind on $B$, given by conjugation by a nondegenerate hermitian matrix, so that $J$ is isomorphic to the Jordan algebra of $3 \times 3$-Hermitian matrices with entries in $K_J$.   
 According the the proposition, there is a unique conjugacy class of embedding $E \hookrightarrow J$ unless $E  = F \times K_E$ and $K$ is a field with $K \ne K_E$. In the exceptional case, there are two subalgebras $E \subset J$ up to conjugacy.  We may write down the 2 non-$F$-isomorphic twisted composition algebras corresponding to these. The twisted composition algebra can be realised on
 \[   E \otimes_F K = K \times (K_E \otimes  K). \]
 Let $\{1, \alpha \}$ denote representatives of $F^{\times}/NK^{\times}$.  Then the 2 twisted composition algebras correspond to
 \[   (e, \nu)  = ((1,1), 1)  \quad \text{or} \quad ((1, \alpha),   \alpha  ) \in (F \times K_E)^{\times} \times K^{\times}. \]
 We see that these two twisted composition algebras are not isomorphic because they are not isomorphic as quadratic spaces over $E$ (even allowing for twisting by $S_E(F)$).  
  \vskip 5pt
  
  Further, when $E = F^3$, there are in fact 4 conjugacy classes of embeddings $E \hookrightarrow J$. This corresponds to the fact that the $F$-isomorphism class of the twisted composition algebras associated to $((1,\alpha), \alpha)$ above breaks into $3$ $E$-isomorphism classes. These are associated to 
  \[  (e_1, \nu_1)  = ((1, \alpha, \alpha), \alpha),\quad  e_2 = ((\alpha, 1, \alpha),\alpha), \quad
   e_2 = ((\alpha, \alpha, 1),\alpha). \]
  \vskip 5pt
 
 \item when $F = \mathbb{R}$, then $E = \mathbb{R}^3$ or $\mathbb{R} \times \mathbb{C}$. 
 When $K_J = \mathbb{R}^2$ is split, then there is a unique $J$, namely the one associated to 
 $M_3(\mathbb{R})$, and there is a unique conjugacy class of embeddings $E \hookrightarrow J$. 
 \vskip 5pt
 
  When $K_J = \mathbb{C}$, then there are two possible $J$'s, associated to 
 $B = M_3(\mathbb{C})$ and the involution $\tau$ given by the conjugation action of two Hermitian matrices with signature $(1,2)$ and $(3,0)$. We denote these two Jordan algebras by $J_{1,2}$ and $J_{3,0}$.  
 \vskip 5pt

 When $E = \mathbb{R}^3$ and $K = \C$, we have $|\Omega_{E,K}|  =2$. However, the two elements in question correspond to embeddings
 \[  \R^3 \hookrightarrow J_{3,0} \quad \text{and} \quad \R^3 \hookrightarrow J_{1,2}. \]
  Thus, we see that these subalgebras are  unique up to conjugacy.
 When $E = \R \times \C$ and $K = \R^2$, we have $|\Omega_{E,K}| = 1$. This reflects the fact that there is no embedding $\R \times \C \hookrightarrow J_{3,0}$, and there is a unique conjugacy class of embeddings $\C \hookrightarrow J_{1,2}$.
 \end{itemize}

 \subsection{Acknowledgment.} This work began when both authors participated in the program ``Branching Laws" at the Institute for Mathematical Sciences at the National University of Singapore in March 2012, and  was completed during the second author's stay at The Hong Kong University of Science and Technology in May 2013. Both authors thank Chengbo Zhu for his invitation to the IMS program.   The second author would like to thank Jianshu Li for the invitation, and HKUST for excellent working environment.  
 \vskip 5pt
 
 The first author is partially supported by AcRF Tier One grant R-146-000-155-112, and the second author is  supported by  a National Science Foundation grant  DMS-0852429.

     \vskip 15pt

\end{document}